\documentclass[a4paper,reqno, oneside]{amsart}
\usepackage[cal=boondoxo, scr=boondox, bb=boondox]{mathalfa}
\usepackage[utf8]{inputenc}
\usepackage[backref=page]{hyperref}
\hypersetup{
pdftitle={An algebraic geometry of paths via the iterated-integrals signature},
pdfsubject={Contrary to previous approaches bringing together algebraic geometry and signatures of paths,
 we introduce a Zariski topology on the space of paths itself,
 and study path varieties consisting of all paths whose iterated-integrals signature satisfies certain polynomial equations.
 Particular emphasis lies on the role of the non-associative halfshuffle,
 which makes it possible to describe varieties of paths that satisfy certain relations all along their trajectory.
 Specifically, we may understand the set of paths on a given classical algebraic variety in R^d starting from a fixed point as a path variety,
 e.g. paths on a sphere.
 While the characteristic geometric property of halfshuffle varieties is that they are stable under stopping paths at an earlier time,
 we furthermore study varieties which are stable under the natural (semi)group operation of concantenation of paths.
 We point out how the notion of dimension for path varieties crucially depends on the fact that they may be reducible into countably infinitely many subvarieties.
 Finally, we see that studying halfshuffle varieties naturally leads to a generalization of classical algebraic curves, surfaces and affine varieties in finite dimensional space.
 These generalized algebraic sets, an example being the graph of the exponential function, are now described through iterated-integral equations.},
 pdfauthor={Rosa Lili Dora Preiß},
 pdfkeywords={path variety; shuffle ideal; halfshuffle; deconcatenation coproduct; tensor algebra; Zariski topology; concatenation of paths; Chen's identity; subpath; tree-like equivalence; regular map; generalized variety}
}

\usepackage{mathtools, mathabx, shuffle, microtype, bookmark, cleveref} \usepackage[hmargin={2cm,4cm}]{geometry}
\usepackage{parskip}

\usepackage[usenames,dvipsnames,table]{xcolor}
\usepackage{url}

\usepackage[linecolor=white,backgroundcolor=white,bordercolor=white,textsize=tiny]{todonotes}
\let\todon\todo
\renewcommand{\todo}[1]{\todon{\color{red}#1}}

\newtheorem{theorem}{Theorem}[section]
\newtheorem{definition}[theorem]{Definition}
\theoremstyle{plain}

\newtheorem{conjecture}[theorem]{Conjecture}
\newtheorem{corollary}[theorem]{Corollary}

\newtheorem{example}[theorem]{Example}

\newtheorem{lemma}[theorem]{Lemma}

\newtheorem{proposition}[theorem]{Proposition}
\newtheorem{remark}[theorem]{Remark}

\newtheorem{question}[theorem]{Question}

\newcommand{\R}{\mathbb{R}}

\newcommand{\C}{\mathbb{C}}

\newcommand{\spann}{\operatorname{span}} 
\renewcommand{\Im}{\operatorname{Im}}

\newcommand\proj{\operatorname{proj}}

\newcommand{\id}{\operatorname{id}}

\def\word#1{{\color{blue}\mathbf{#1}}}

\newcommand{\conc}{\mathbin{\bullet}}

\newcommand{\emptyword}{\mathsf{e}}
\newcommand{\varltwo}[1]{C^{2^-\text{-var}}(#1)}
\newcommand{\varltwogroup}[1]{\check{C}^{2^-\text{-var}}(#1)}
\newcommand{\varltwoz}[1]{C_0^{2^-\text{-var}}(#1)}
\newcommand{\pathsin}[1]{P_{\in #1}}
\newcommand{\antipode}{\mathcal{A}}

\newcommand{\incin}[1]{E_{\in #1}}
\newcommand{\loops}[1]{\mathcal{L}_{#1}}
\newcommand{\treelike}{\mathrel{\stackrel{\text{tree}}{\cong}}}

\tikzstyle{endpoints}=[fill=black, draw=black, shape=circle, inner sep=0.15ex]

\tikzstyle{dashededge}=[-, dashed]
\tikzstyle{arrowedge}=[->]
\tikzstyle{rededge}=[-, draw=red]
\tikzstyle{bluearrowedge}=[->, draw=blue]

\let\origmaketitle\maketitle
\def\maketitle{
  \begingroup
  \let\MakeUppercase\relax \origmaketitle
  \endgroup
}

\title{An algebraic geometry of paths \mbox{via the iterated-integrals signature}}
\date{\today}
\author{\Large Rosa Lili Dora Preiß$^{\includegraphics[width=0.02\textwidth]{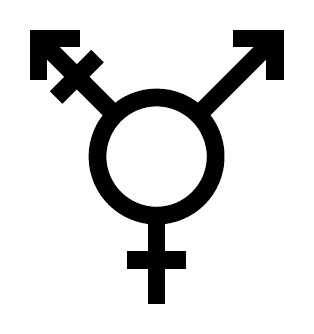}}$}
\thanks{$^{\includegraphics[width=0.015\textwidth]{trans}}$MPI MiS Leipzig}

\begin{document}
\maketitle
\begin{abstract}
 Contrary to previous approaches bringing together algebraic geometry and signatures of paths,
 we introduce a Zariski topology on the space of paths itself,
 and study path varieties consisting of all paths whose iterated-integrals signature satisfies certain polynomial equations.
 Particular emphasis lies on the role of the non-associative halfshuffle,
 which makes it possible to describe varieties of paths that satisfy certain relations all along their trajectory.
 Specifically, we may understand the set of paths on a given classical algebraic variety in $\R^d$ starting from a fixed point as a path variety,
 e.g.\ paths on a sphere.
 While the characteristic geometric property of halfshuffle varieties is that they are stable under stopping paths at an earlier time,
 we furthermore study varieties which are stable under the natural (semi)group operation of concantenation of paths.
 We point out how the notion of dimension for path varieties crucially depends on the fact that they may be reducible into countably infinitely many subvarieties.
 Finally, we see that studying halfshuffle varieties naturally leads to a generalization of classical algebraic curves, surfaces and affine varieties in finite dimensional space.
 These generalized algebraic sets, an example being the graph of the exponential function, are now described through iterated-integral equations.\\[2ex]
 \textsc{Keywords}. path variety; shuffle ideal; halfshuffle; deconcatenation coproduct; tensor algebra; Zariski topology; concatenation of paths; Chen's identity; subpath; tree-like equivalence; regular map; generalized variety
\end{abstract}

\tableofcontents
\setcounter{section}{-1}
\section{Introduction}

It is well established that the iterated-integrals signature provides a powerful connection between algebra and geometry.
The correspondence between the antipode and time inversion of the path, as well as between the internal tensor product of the tensor algebra and concatenation of paths, already highlights that,
and hints towards the geometry-algebra variety-ideal correspondence we are introducing in this paper.
Furthermore, we know from previous work that certain components of the signature have a very nice geometrical interpretation, most notably maybe the signed areas and signed volumes.
In \cite{DR18}, it was shown that the signed areas and signed volumes can for piecewise linear paths be expressed as sums over areas/volumes of simplices spanned by the linear pieces, and that for the moment curve (but not for every path), the absolute value of the signed volume is exactly the volume of the convex hull.
Then \cite{amendolaleemeroni23} showed that this special property holds for all cyclic paths, i.e. paths which are the limit of paths which satisfy a certain strict multidimensional convexity condition in terms of a non-negative determinant for all combinations of distinct intermediate points.

In \cite{DLPR20}, it has been shown that the signature is polynomial in the iterated signed areas, providing further evidence of its intrinsic geometrical nature.

With \cite{AFS18}, for the probably first time, algebraic geometry and the study of iterated-integral signatures have been brought together in a practical way ready for computations. They look at the complex projective Zariski closure of the image of the truncated signature map for piecewise linear paths and polynomial paths.
In \cite{G18}, complex projective signature image varieties are studied for log-linear rough paths.
In \cite{CGM20}, it is shown that these varieties are toric for the classes of log-linear and axis parallel paths.

In contrast to these previous approaches of the last years,
we do not want to restrict ourselves to studying the geometry of the image of the truncated signature in finite dimensional spaces.
Instead, we want to study a Zariski topology on the infinite dimensional path space itself.

An inspirational example of what we will consider a path variety is the orbit of a path under pointwise action by a compact linear group.
This object to be described as the solution set of linear/polynomial equations in the signature has been conjectured in \cite{DR18},
and proven in \cite{diehllyonsnipreiss}.

In \cite{diehlreizensteinpreiss} conjugation, loop and closure invariants are studied,
and this straightforward leads  to the question whether the orbit of a path up to conjugation or closure is an algebraic set.

Furthermore, in \cite[Corollary 3]{colmenarejopreiss20} it was understood that for a polynomial map $p:\,\mathbb{R}^d\to\R^t$,
the set of all paths $X$ such that $p(X)$ is tree-like is described by linear/polynomial equations in the signature,
and thus it also forms a path variety.

In fact,
already in \cite[Section~3]{bib:Che1954} the path varieties $\mathcal{L}_k$ of loops of order $k$ (loops with signature vanishing up to order $k$) have been introduced
not as varieties,
but as subgroups of the reduced path group.
We study general algebraic subgroups in Section \ref{sec:subsemigroups}.

Many intricacies of the study of path varieties compared to classical algebraic geometry
result from the fact that even though the image of the signature lies dense in the free Lie group,
it is a strict and hard to describe subgroup.
Indeed, a classical argument by Lyons is that any element in the free Lie group has a square root,
as the exponential map from free Lie series to the free Lie group is bijective,
but many paths, such as a semicircle, the sinus graph from $0$ to $2\pi$, the concatenation of two non-parallel linear paths,
cannot have a square root under path concatenation.

\subsection{Outline}

In section \ref{sec:topology}, we introduce our new Zariski topology on path space and
state as a first example that the set of paths with increment lying in a subset $M\subset\R^d$ is a path variety if and only if $M$ is a point variety.
We then discuss what the fact that the signature characterizes the path
only up to tree-like equivalence
means for the Zariski topology,
how our setting is a necessarily infinite dimensional one,
and further differences to what we know from algebraic geometry on finite dimensional vector spaces.
Finally, we observe how path varieties contain limits of analytically converging sequences of paths.

In section \ref{sec:conc}, we understand the concatenation of paths as a Zariski continuous map,
and give explicit formulas for the preimage varieties.

Section \ref{sec:halfshuffle} is devoted to our first main result,
stating that varieties stable under stopping reduced paths at an earlier time
correspond to halfshuffle ideals.
An immediate corollary using the antipode/time reversal correspondence is that varieties stable under starting reduced paths at a later time are those defined by ideals of the opposite halfshuffle.
We then state the important consequence that reduced paths starting from $0$ and then staying on a fixed point variety can be described by a path variety.

Section \ref{sec:rank_varieties} introduces a class of halfshuffle varieties which as we show includes the set of reduced paths in some finite dimensional subspace of given dimension,
as well as the set of reduced paths lying in some sub point variety of $\R^d$ of given degree.

In Section \ref{sec:homogeneous} we show that homogeneous ideals correspond to path varieties that are stable under rescaling.
This makes it possible to understand the description of finite dimensional path spaces presented in \cite{AFS18} within our new framework.

In Section \ref{sec:subsemigroups} we will understand the correspondence of path varieties stable under concatenation with Hopf ideals.
The results Theorem \ref{thm:infdivi}, Lemma \ref{lem:commutation} and Theorem \ref{thm:irrationalroot} characterizing infinitely divisible paths are of general relevance for the study of the reduced path group.

In section \ref{sec:dim}, we describe the notion of dimension for path varieties,
which relies on the concept of countably irreducible varieties,
and see how path varieties with finitely generated ideals are always infinite dimensional.

In section \ref{sec:morphisms}, we introduce regular maps between path spaces,
which are in particular Zariski continuous.
We then look at the particular examples of $M_p$, which are pointwise polynomial for the path,
and $\Lambda_B$, which are pointwise polynomial for the signature.
Finally, we discribe a more general class of regular maps which is stable under composition
and includes $\Lambda_B$, time inversion, concatenation powers and concatenation of a fixed path.

We see in section \ref{sec:generalized} how the family of $M\subseteq\R^d$ such that the set of reduced paths starting in $0$ and staying in $M$ is a path variety, is more general than just point varieties.

\subsection{Acknowledgements}

We thank Carlos Améndola, Simon Breneis, Ilya Chevyrev, Joscha Diehl, Kurusch Ebrahimi-Fard, Peter Friz, Francesco Galuppi, Xi Geng, Leonie Kayser, Darrick Lee, Felix Lotter, Ludwig Rahm, Jeremy Reizenstein, Alexander Schmeding, Leonard Schmitz and Bernd Sturmfels for inspiring discussions and suggestions.
We in particular thank Carlos Améndola and Bernd Sturmfels for suggesting that the $M_p$ can be understood as morphisms of path varieties,
for comments on the geometry of points with no history versus the geometry of paths with a history,
for proofreading,
as well as asking the question of whether the set of paths lying in some surface of given degree can be described as a path variety.
We also specifically thank Felix Lotter and Leonard Schmitz for all the conversations while working on joint projects related to the topic of this article.
We furthermore thank Ludwig Rahm for the discussion of a talk on the present article during an Oberwolfach Mini-Workshop, see \cite{Oberwolfach23}.

\subsection{Basic definitions for and results on signatures}
 Our study of the geometry of path space crucially relies on two topological operations for paths,
 namely the associative concatenation of two paths
 and the involutive time reversal.

 For continuous paths $X,Y$ through $\mathbb{R}^d$, let $X\sqcup Y$ denote their concatenation,
 i.e. $X$ continuously followed by $Y$ with $Y$ translated such that the starting point of $Y$ then falls onto the end point of $X$,
 or in formulas,
 for $[0,T_1]$ the time domain of $X$ and $[0,T_2]$ the time domain of $Y$,
 \begin{equation*}
  (X\sqcup Y)_t:=
  \begin{cases}
   X_t,& t\in[0,T_1],\\
   Y_{t-T_1}+X_{T_1}-Y_0,& t\in(T_1,T_1+T_2].
  \end{cases}
 \end{equation*}

 For a path $X:[0,T]\to\R^d$, let its time reversal
 $\overleftarrow{X}$ given by $X$ run backwards,
 i.e. $\overleftarrow{X}_t:=X_{T-t}$.

 These two operations allow us to define an important equivalence relation between paths.
 To describe it,
 an $\mathbb{R}$-tree $(\tau,d)$ is defined as a metric space such that for any two distinct $x,y\in\tau$ there is an isometry $Z:\,[0,d(x,y)]\to\Im Z\subseteq\tau$ which is the up to reparametrization unique continuous map $[0,d(x,y)]$ to $\tau$ such that $Z(0)=x$ and $Z(d(x,y))=y$ (cf.\ \cite[Definition~2.1]{hamblylyons2008}).
 Following \cite[Definition~1.1]{BGLY16}, a path $X:[0,T]\to\R^d$ is then said to be tree-like if there is an $\mathbb{R}$-tree $\tau$, a continuous function $f:\,[0,T]\to\tau$ with $f(0)=f(T)$
and a function $g:\,\tau\to\mathbb{R}^d$ such that $X=g\circ f$ ($g$ is then automatically continuous on the image of $f$).
Finally, two paths $X$ and $Y$ are called tree-like equivalent, denoted as $X\treelike Y$, if $X\sqcup\overleftarrow Y$ is tree-like (\cite[Section~2]{BGLY16}).
For an example see Figure \ref{fig:treelike}.

\begin{figure}
\includegraphics[width=0.8\textwidth]{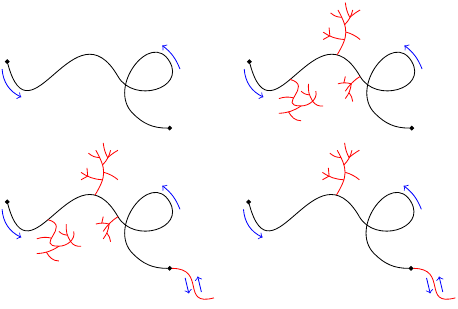}
\caption{Four tree-like equivalent paths with start- and endpoint highlighted. The upper left path is the reduced path (see Section \ref{sec:difftoclassical}).}\label{fig:treelike}
\end{figure}

While these notions so far were well-defined for all continuous paths,
for computing Stieltjes integrals,
we need to restrict their roughness.

 So, let us consider the space $\varltwo{\R^d}$ of continuous paths in $\mathbb{R}^d$ which have finite $p$-variation for some $p<2$, i.e. following \cite[Deﬁnition~5.1]{FrizVictoir10}
\begin{equation*}
 \varltwo{\R^d}:=\{X:[0,T]\to\R^d|T\in (0,\infty),\, X\text{ continuous},\,\exists p\in[1,2):\|X\|_{p\text{-var}}<\infty\}
\end{equation*}
where
\begin{equation*}
  \|X\|_{p\text{-var}}:=\Bigg(\sup_{P\in\mathcal{P}([0,T_X])}\,\sum_{[s,t]\in P}\|X_t-X_s\|_2^p\Bigg)^{1/p},
\end{equation*}
with $\mathcal{P}([0,T_X])$ the set of partitions of the time domain $[0,T_X]$ of $X$.

Note that $\varltwo{\R^d}$ is not a vector space because the addition is not well-defined due to the differing time intervals.
We could make it one by rescaling everything to a standard time interval $[0,1]$,
but we conceptually choose not to see our path space as a vector space
but rather as a semigroup with product the concatenation $\sqcup$
and additionally an involution given by the time inverse $X\mapsto\overleftarrow{X}=\antipode X$,
and rescaling all paths to $[0,1]$ would destroy the associativity of $\sqcup$.

A finite $p$-variation for some $p<2$ ensures Riemann-Stietljes integrability of the components against each other (a special case of the results in \cite{young36}),
and existence of our object of study,
the iterated-integrals signature $\sigma:\varltwo{\R^d}\to T((\R^d))$,
where $T((\R^d))$ is the linear dual space of the tensor algebra $T(\R^d):=\bigoplus_{n=0}^\infty (\R^d)^{\otimes n}$.
We identify the tensor algebra with the space of words
\begin{equation*}
\spann(\{\emptyword\}\cup\{\word{i}_1\cdots\word{i}_n|n\in\mathbb{N},\,\word{i}_j\in\{\word{1},\dots,\word{d}\}\}),
\end{equation*}
where the empty word $\emptyword$ spans $\R=(\R^d)^{\otimes 0}$
and the words $\word{i}_1\cdots\word{i}_n$ in arbitrary letters from the alphabet $\{\word{1},\dots,\word{d}\}$
span the level $n$ tensors $(\R^d)^{\otimes n}$.
The signature of the path $X$ is then defined by assigning to a word $\word{i}_1\cdots\word{i}_n$ the corresponding iterated Riemann-Stieltjes integral of the components $X^{[i_1]},\dots,X^{[i_n]}$,
i.e.
\begin{equation*}
 \langle\sigma(X),\word{i}_1\dots\word{i}_n\rangle:=\int_{0<t_1<\dots< t_n<T} \mathrm{d}X^{[i_1]}_{t_1}\dots\mathrm{d}X^{[i_n]}_{t_n}.
\end{equation*}
  We furthermore put $\sigma(X)_t:=\sigma(X|_{[0,t]})$ for the signature stopped at an earlier time $t\in[0,T]$.

  \begin{figure}
   \includegraphics[width=0.85\textwidth]{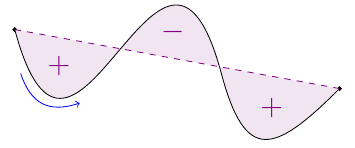}
   \caption{The signed area of the example path is the sum of the individual area segments counted with the displayed presigns.}\label{fig:areaex}
  \end{figure}

  Considering for example a path $X\in\varltwo{\R^2}$,
  we have the increments
  \begin{equation*}
   \langle\sigma(X),\word{1}\rangle=\int_0^T \mathrm{d} X^{[1]}_t=X^{[1]}_T-X^{[1]}_0,\quad \langle\sigma(X),\word{2}\rangle=\int_0^T \mathrm{d} X^{[2]}_t=X^{[2]}_T-X^{[2]}_0
  \end{equation*}
  and its signed area (see Figure \ref{fig:areaex}) is given by
  \begin{equation*}
   \langle\sigma(X),\tfrac{1}{2}(\word{12}-\word{21})\rangle.
  \end{equation*}

  The iterated-integrals signature goes back to the works of Chen \cite{bib:Che1954}, \cite{C57}, \cite{chen1958}
  and experienced a revival in stochastic analysis through Lyons' rough paths \cite{lyons94}, \cite{lyons95}, \cite{Lyons98}.

  To describe some very useful algebraic relations that the iterated-integrals signature satisfies,
  we need to first describe the algebraic structure associated with the tensor algebra.
  For an introduction to Hopf algebras see for example \cite{manchon06}, \cite{preiss16}, \cite{abe80} or \cite{sweedler69}.
  For the specific structure on the tensor algebra,
  we furthermore refer to \cite{reutenauer93}.
  For the halfshuffle,
  see e.g. \cite{foissypatras2020}, \cite{bib:FP13} or \cite{colmenarejopreiss20}.

 Let $\conc:T(\mathbb{R}^d)\times T(\mathbb{R}^d)\to T(\mathbb{R}^d)$ denote the bilinear associative concatenation in the tensor algebra,
 \begin{equation*}
 \word{i}_1\dots\word{i}_n\conc\word{i}_{n+1}\dots\word{i}_{n+m}:=\word{i}_1\dots\word{i}_n\word{i}_{n+1}\dots\word{i}_{n+m}
 \end{equation*}
 i.e. the internal tensor product.
 We also identify $T((\R^d))$ with the space of infinite formal series over words,
 and extend $\conc$ to $T((\R^d))$.
 Let $\Delta_{\conc}:\,T(\R^d)\to T(\R^d)\otimes T(\R^d)$ denote the deconcatenation,
 i.e.\ the linear coassociative coproduct dual to $\conc$,\begin{equation*}
  \langle x\conc y,z\rangle=\langle x\otimes y,\Delta_{\conc} z\rangle
 \end{equation*}
 for all $x,y\in T((\R^d))$ and all $z\in T(\R^d)$,
 where on the right hand side we have the pairing between
 $T((\R^d))\otimes T((\R^d))$ and $T(\R^d)\otimes T(\R^d)$
 given by $\langle\sum_{i}x_i\otimes y_i,\sum_{j}v_i\otimes w_i\rangle
 :=\sum_{i,j}\langle x_i,v_i\rangle\langle y_i,w_i\rangle$. For example,
 \begin{equation*}
  \Delta_{\conc} \word{1234}=\emptyword\otimes\word{1234}+\word{1}\otimes\word{234}+\word{12}\otimes\word{34}+\word{123}\otimes\word{4}+\word{1234}\otimes\emptyword.
 \end{equation*}
 Coassociativity is the property dual to associativity characterized by $(\Delta_{\conc}\otimes \id)\Delta_{\conc}=(\id\otimes\Delta_{\conc})\Delta_{\conc}.$

 We furthermore introduce Sweedler's notation for the coproduct.
 For any bilinear $B:\,T(\R^d)\times T(\R^d)\to W$
 for some vector space $W$,
 by the universal property of the tensor product there is a unique linear $\mathrm{m}_B:\,T(\R^d)\otimes T(\R^d)\to W$
 with $\mathrm{m}_B(x\otimes y)=B(x,y)$ for all $x,y\in T(\R^d)$.
 We then write
 \begin{equation*}
  \sum_{(x)}^{\conc}B(x_1,x_2):=\mathrm{m}_B\Delta_{\conc}x.
 \end{equation*}
 Similarly,
 if $\mathrm{m}_M:\,T(\R^d)^{\otimes n}\to W$ is the linear map corresponding to the multilinear map
 $M:\,T(\R^d)^{\times n}\to W$,
 then
 \begin{equation*}
  \sum_{(x)}^{\conc n}M(x_1,\dots,x_n):=\mathrm{m}_M\Delta^{n-1}_{\conc}x
 \end{equation*}
 with $\Delta^1_{\conc}:=\Delta_{\conc}$ and $\Delta^{k+1}_{\conc}:=(\id\otimes \Delta_{\conc}^k)\Delta_{\conc}$.
 Due to coassociativity,
 we have $(\id\otimes \Delta_{\conc}^k)\Delta_{\conc}=(\Delta_{\conc}^k\otimes\id)\Delta_{\conc}$ for all $k$.

 Let the bilinear right $\succ$ and left $\prec$ halfshuffles $\succ,\prec:T^{\geq 1}(\mathbb{R}^d)\times T^{\geq 1}(\mathbb{R}^d)\to T^{\geq 1}(\mathbb{R}^d)$ be recursively defined by
 \begin{align*}
  w\succ\word{i} &:= w\word{i},&\word{i}\prec w &:= \word{i}w\\
  w\succ v\word{i} &:= (w\succ v+v\succ w)\word{i},& \word{i}v \prec w&:= \word{i}(w\prec v+v\prec w)
 \end{align*}
where $w,v$ are non-empty words and $\word{i}$ is a letter.
For example,
\begin{align*}
 \word{12}\succ\word{34}&=\word{1234}+\word{1324}+\word{3124},\\
 \word{12}\prec\word{34}&=\word{1234}+\word{1324}+\word{1342}.
\end{align*}

Halfshuffles go back to Eilenberg-Mac-Lane \cite{EM53} and Schützenberger \cite{S58}.

Furthermore, let
\begin{equation}\label{eq:shsuccprec}
x\shuffle y :=x\succ y+y\succ x=x\prec y+y\prec x
\end{equation}
denote the commutative associative shuffle product, and we compute
\begin{equation*}
 \word{12}\shuffle\word{34}=\word{1234}+\word{1324}+\word{3124}+\word{1342}+\word{3142}+\word{3412}.
\end{equation*}

The halfshuffles themselves are non-commutative and non-associative,
but they satisfy the left resp.\ right Zinbiel identities
\begin{align*}
 (x\prec y)\prec z&=x\prec (y\prec z+z\prec y),\\
 x\succ (y\succ z)&=(x\succ y+y\succ x)\succ z.
\end{align*}
Note that this structure should not be confused with a non-commutative dendriform algebra!

Additionally,
we introduce a linear map $T(\R^d)\to T(\R^d)$
which we will see corresponds to time reversal,
\begin{equation*}
 \antipode(\word{i}_1\dots\word{i}_n):=(-1)^n\word{i}_n\dots\word{i}_1.
\end{equation*}
$(T(\R^d),\shuffle,\Delta_{\conc})$ is then a Hopf algebra together with the antipode $\antipode$.
In any Hopf algebra, the antipode is uniquely characterized by the antipode property\begin{equation*}
 \mathrm{m}_{\shuffle}(\antipode\otimes\id)\Delta_{\conc}w=\langle \emptyword,w\rangle \,\emptyword,
\end{equation*}
where $\mathrm{m}_{\shuffle}(w\otimes v):=w\shuffle v$.
Note that in Sweedler's notation,
\begin{equation*}
 \mathrm{m}_{\shuffle}(\antipode\otimes\id)\Delta_{\conc}w=\sum_{(w)}^{\conc}\antipode w_1\shuffle w_2.
\end{equation*}

Coming back to the halfshuffles, since the definitions of $\succ$ and $\prec$ are mirror images of each other, we then get
\begin{equation}\label{eq:antipodehalfshuffle}
\mathcal{A}(x\succ y)=\mathcal{A}x\prec \mathcal{A} y\text{ and }\mathcal{A}(x\prec y)=\mathcal{A}x\succ \mathcal{A} y,
\end{equation}
and in particular $\antipode$ is a shuffle homomorphism.

For all $X\in\varltwo{\R^d}$, $X:\,[0,T]\to\mathbb{R}^d$ and $x,y\in T^{\geq 1}(\mathbb{R}^d)$, we have the halfshuffle identity
\begin{equation*}
 \langle\sigma(X),x\succ y\rangle=\int_0^T \langle\sigma(X)_t,x\rangle\,\mathrm d\langle\sigma(X)_t,y\rangle,
\end{equation*}
where integration is with respect to the $t$ variable,
and this implies Ree's shuffle identity
\begin{equation*}
\langle\sigma(X),x\shuffle y\rangle=\langle\sigma(X),x\rangle\langle\sigma(X),y\rangle.
\end{equation*}
The halfshuffle identity can also be seen through the original proof of Ree's shuffle identity in \cite{Ree58}, but up to the author's knowledge was explicitly mentioned only much later (see e.g.\ \cite{GK08}).

In light of the halfshuffle identity,
$x\shuffle y=x\succ y+y\succ x$ encodes integration by parts,
\begin{equation*}
 \int_0^T f(t) dg(t)+\int_0^T g(t) df(t)=f(T)g(T)-f(0)g(0),
\end{equation*}
where the last term vanishes for signature components corresponding to $x,y\in T^{\geq 1}(\mathbb{R}^d)$ because $\sigma(X)_0=\emptyword$.
Similarly, the right Zinbiel identity is nothing but an algebraic encoding of
\begin{equation}\label{eq:anazinbiel}
 \int_0^T f(t) d\Big(\int_0^t g(s)dh(s)\Big)=\int_0^T f(t)g(t) dh(t)
\end{equation}
for $f,g,h\in\varltwo{\R}$.

Because of the shuffle identity, the signature takes values in the grouplike elements
\begin{equation*}
\mathcal{G}_d:=\{g\in T((\mathbb{R}^d))|g\neq 0,\langle g,x\shuffle y\rangle=\langle g,x\rangle\langle g,y\rangle\,\forall x,y\in T(\R^d)\}.
\end{equation*}
The grouplike elements form a group with product $\conc$
and inverse given by the adjoint of $\antipode$,
which we will also denote by $\antipode$
since it happens to have the same formula on words.
$\mathcal{G}_d$ is an infinite dimensional Lie group with Lie algebra given by
\begin{equation*}
\mathfrak{g}((\R^d)):=\{v\in T((\mathbb{R}^d))|\langle v,x\shuffle y\rangle=\langle v,x\rangle\langle \emptyword,y\rangle+\langle \emptyword,x\rangle\langle v,y\rangle\,\forall x,y\in T(\R^d)\}.
\end{equation*}
$\mathfrak{g}((\R^d))$ is the closure in $T((\R^d))$ of the free Lie algebra $\mathfrak{g}(\R^d)$, i.e.\ the Lie algebra with Lie bracket $[x,y]:=x\conc y-y\conc x$ generated by the letters.
The Lie-theoretic exponential map $\exp_{\conc}$ here has the particular property that it is bijective from $\mathfrak{g}((\R^d))$ to $\mathcal{G}_d$.
Denoting its inverse by $\log_{\conc}$,
we have
\begin{equation*}
 \exp_{\conc}(v)=\sum_{n=0}^\infty \frac{x^{\conc n}}{n!},\quad \log_{\conc}(g)=\sum_{n=1}^\infty (-1)^{n+1}\,\frac{(g-\emptyword)^{\conc n}}{n},
\end{equation*}
where $x^{\conc 0}:=\emptyword$.

Linear functionals of $\log_{\conc}(g)$ can be expressed as linear functionals in $g$ via coordinates of the first kind, i.e.\ for any $w\in T(\R^d)$, there is a coordinate of the first kind $c_w$ with
\begin{equation*}
 \langle\log_{\conc}(g),w\rangle=\langle g,c_w\rangle.
\end{equation*}
Coordinates of the first kind form a linear subspace of the tensor algebra,
and they are also characterized by the property
\begin{equation*}
 \langle g^{\conc n},c_w\rangle=n\,\langle g,c_w\rangle\quad\forall\, n\in\mathbb{N},\,g\in\mathcal{G}_d.
\end{equation*}

For the signature, we have Chen's identity \cite[Theorem~3.1]{bib:Che1954}
\begin{equation*}
\sigma(X\sqcup Y)=\sigma(X)\conc \sigma(Y),
\end{equation*}
which can dually be written as
\begin{equation*}
 \langle\sigma(X\sqcup Y),w\rangle=\langle\sigma(X)\otimes\sigma(Y),\Delta_{\conc} w\rangle.
\end{equation*}
Consequently,
\begin{equation*}
 \sigma(\overleftarrow{X})=\sigma(X)^{\conc -1}=\antipode\sigma(X),
\end{equation*}
and this is why we will also put $\antipode X:=\overleftarrow{X}$.

The Chen-Chow theorem in the version of \cite{FrizVictoir10}[Theorem 7.28] states that any element in the $k$-level truncated $\proj_{\leq k}\mathcal{G}_d$ is equal to a $k$-level truncated signature $\proj_{\leq k}\sigma(X)$ of a piecewise linear path $X$.

Finally, we fix some further notation for the article.

Let $\mathscr{I}_{\shuffle}(W)$ denote the shuffle ideal generated by $W\subseteq T(\mathbb{R}^d)$, i.e.\ the smallest subspace such that
\begin{equation*}
 x\in\mathscr{I}_{\shuffle}(W),\,y\in T(\mathbb{R}^d)\,\Rightarrow\,x\shuffle y \in \mathscr{I}_{\shuffle}(W).
\end{equation*}
By $\mathscr{I}_{\succ}(W)$ we denote the two-sided ideal with respect to the (non-associative!) right halfshuffle $\succ$,
i.e.\
\begin{equation*}
 x\in\mathscr{I}_{\succ}(W),\,y\in T(\mathbb{R}^d)
 \,\Rightarrow\,x\succ y, \, y\succ x \in \mathscr{I}_{\succ}(W).
\end{equation*}
Because of \eqref{eq:shsuccprec}, $\mathscr{I}_{\succ}(W)$ is then automatically a shuffle ideal too.
$\mathscr{I}_{\prec}(W)$ is defined analogously as the two sided ideal with respect to $\prec$.
Lastly, $\mathscr{I}_{\succ,\prec}(W)$ is defined as the two-sided ideal with respect to both $\prec$ and $\succ$, i.e.\
\begin{equation*}
 x\in\mathscr{I}_{\succ,\prec}(W),\,y\in T(\mathbb{R}^d)
 \,\Rightarrow\,x\succ y,\, y\succ x,\, x\prec y,\, y\prec x \in \mathscr{I}_{\succ,\prec}(W).
\end{equation*}
In particular,
\begin{equation*}
 \mathscr{I}_{\shuffle}(W)\subseteq\mathscr{I}_{\succ}(W)\subseteq\mathscr{I}_{\succ,\prec}(W),\quad \mathscr{I}_{\shuffle}(W)\subseteq\mathscr{I}_{\prec}(W)\subseteq\mathscr{I}_{\succ,\prec}(W)
\end{equation*}

In the following example for $T(\R^2)$,
we give a basis for the ideals up to level $2$,
\begin{align*}
 \mathscr{I}_{\shuffle}(\word{1212})&=\spann\{\word{1212},\,2\cdot\word{11212}+2\cdot\word{12112}+\word{12121},\,\word{21212}+2\cdot\word{12212}+2\cdot\word{12122},\,\dots\},\\
 \mathscr{I}_{\succ}(\word{1212})&=\spann\{\word{1212},\,\word{11212}+\word{12112}+\word{12121},\,\word{12121},\,\word{21212}+2\cdot\word{12212},\,\word{12122},\,\dots\},\\
 \mathscr{I}_{\prec}(\word{1212})&=\spann\{\word{1212},\,2\cdot\word{12112}+\word{12121},\,\word{11212},\,\word{12212}+\word{12122},\,\word{21212},\,\dots\},\\
 \mathscr{I}_{\succ,\prec}(\word{1212})&=\spann\{\word{1212},\,\word{12112},\,\word{12121},\,\word{11212},\,\word{12212},\,\word{12122},\,\word{21212},\,\dots\}.
\end{align*}

Let $\Lambda_B: T(\mathbb{R}^m)\to T(\mathbb{R}^n)$ for $B:\,\R^m\to T(\R^n)$ linear be recursively defined by $\Lambda_B\emptyword=\emptyword$, $\Lambda_B \word{i}=B(\word{i})$ and $\Lambda_B w\word{i}=\Lambda_B w\succ \Lambda_B\word{i}$.
For $p:\mathbb{R}^n\to \mathbb{R}^m$ polynomial with $p(0)=0$, $\varphi(p_i)$ is given by the image of $p_i$ under the unique homomorphism from $\mathbb{R}[x_1,\dots x_n]$ to the shuffle algebra which maps $x_j$ to $\word{j}$.
Put $M_p:=\Lambda_{\varphi(p)}$ for $\varphi(p)(\word{i}):=\varphi(p_i)$.

\section{Path Zariski topology}
\label{sec:topology}
In classical algebraic geometry, affine subvarieties of $\R^d$
 are sets of the form (see e.g.\ \cite[Definition 2.1.1.]{bochnakcosteroy})
 \begin{equation*}
  V(P)=\{x\in\R^d|p(x)=0\,\forall p\in P\}
 \end{equation*}
 where $P$ is a set of polynomials $p:\,\R^d\to\R$.

 Similarly, we now consider varieties in $\varltwo{\R^d}$.

The new concept we are introducing and studying in this paper is the following.
We call a \emph{path variety} any set of the form
\begin{equation*}
\mathcal{V}(W):=\{X\in\varltwo{\R^d}|\langle\sigma(X),x\rangle=0\,\forall x\in W\},
\end{equation*}
where $W$ is any subset of the tensor algebra.

Note that it is part of the folklore of the study of iterated-integrals signatures that polynomial functions in the signature are linear functions in the signature.
Indeed, any component of the signature can be seen as a polynomial over the log-signature.
So with our construction, we are really connecting path space with a ring of (shuffle) polynomials.

Path varieties are in 1-to-1 correspondence to the $2^-\text{-var}$ radical\footnote{Note that we don't have a Nullstellensatz yet here,
so no algebraic characterization of what it means to be a  $2^-\text{-var}$ radical.
It can however easily be seen to be a stronger property than being a real radical.}
shuffle ideals
\begin{equation*}
\mathcal{I}(M):=\{x\in T(\mathbb{R}^d)|\langle\sigma(X),x\rangle=0\,\forall X\in M\}, \quad M\subseteq\varltwo{\R^d}.
\end{equation*}
Indeed, by the shuffle identity, it is immediately evident that $\mathcal{I}(M)$ is a shuffle ideal,
i.e. a subspace $I$ of $T(\R^d)$ such that $x\shuffle y\in I$ for all $x\in I$ and $y\in T(\R^d)$.
Furthermore $\mathcal{V}(W)=\mathcal{V}(\mathcal{I}(\mathcal{V}(W))$ as well as $\mathcal{I}(M)=\mathcal{I}(\mathcal{V}(\mathcal{I}(M))$,
so all path varieties are obtained from $2^{-}\text{-var}$ shuffle ideals under $\mathcal{V}$
and all $2^{-}\text{-var}$ radical shuffle ideals are obtained from path varieties under $\mathcal{I}$.

In analogy to the Zariski topology of (real) affine space $\R^d$ resp.\ $\C^d$ (see e.g. \cite[Section~2]{gathmann}, \cite[Section~2.1]{bochnakcosteroy}),
we now introduce our new path Zariski topology of $\varltwo{\R^d}$ through defining the family of closed sets as the collection of all path varieties of $\varltwo{\R^d}$,
and so the closure operator is given by $\mathcal{V}\circ\mathcal{I}$.
On $T(\R^d)$, the $2^{-}\text{-var}$ radical operator\footnote{which is not a topological closure operator in contrast to $\mathcal{V}\circ\mathcal{I}$}
is $\mathcal{I}\circ\mathcal{V}$.

In follow up work, we will look at the same construction for path subspaces of $\varltwo{\R^d}$, for example the bounded variation paths, or for the larger rough path spaces, and will then write $\mathcal{V}^\mathcal{X}(M)$ for a given path space $\mathcal{X}$ for clarity.

We give a first example connecting classical algebraic geometry with our path Zariski topology.
For any $X:\,[0,T]\to\R^d$,
we call $X_T-X_0\in\R^d$ the increment of $X$.
\begin{theorem}
 The set $\incin{M}$ of paths in $\varltwo{\R^d}$ with increments in $M\subseteq\mathbb{R}^d$ is a path variety if and only if $M$ is a point variety.
\end{theorem}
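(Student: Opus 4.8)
The plan is to use the single fact that the level-one part of the signature records exactly the increment: writing $\increment(X):=X_T-X_0$, we have $\langle\sigma(X),\word{i}\rangle=\increment(X)^{[i]}$, and hence, by Ree's shuffle identity, every monomial in the increment coordinates is a linear functional of $\sigma(X)$: indeed $\prod_{j=1}^{d}\bigl(\increment(X)^{[j]}\bigr)^{\alpha_j}=\langle\sigma(X),\word{1}^{\shuffle\alpha_1}\shuffle\cdots\shuffle\word{d}^{\shuffle\alpha_d}\rangle$. Summarizing, with $\varphi$ the algebra homomorphism $\R[x_1,\dots,x_d]\to(T(\R^d),\shuffle)$ of the excerpt that sends $x_j\mapsto\word{j}$ and $1\mapsto\emptyword$, we get $\langle\sigma(X),\varphi(p)\rangle=p(\increment(X))$ for every polynomial $p:\R^d\to\R$ (no assumption $p(0)=0$ is needed here, since $\varphi$ simply sends the constant term to a multiple of $\emptyword$ and $\langle\sigma(X),\emptyword\rangle=1$).

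For the ``if'' direction I would write $M=V(P)$ for some set $P$ of polynomials $\R^d\to\R$ and put $W:=\{\varphi(p)\mid p\in P\}\subseteq T(\R^d)$. Then $X\in\mathcal{V}(W)$ iff $p(\increment(X))=0$ for all $p\in P$, i.e.\ iff $\increment(X)\in V(P)=M$, so $\incin{M}=\mathcal{V}(W)$ is a path variety.

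For the ``only if'' direction, assume $\incin{M}=\mathcal{V}(W)$ with $W\subseteq T(\R^d)$ and test it against straight lines. For $v\in\R^d$ let $\ell_v:[0,1]\to\R^d$, $\ell_v(t):=tv$; this path lies in $\varltwo{\R^d}$ (finite $1$-variation) and has $\increment(\ell_v)=v$, so $\ell_v\in\incin{M}$ iff $v\in M$. A direct computation of the iterated integrals gives $\langle\sigma(\ell_v),\word{i}_1\cdots\word{i}_n\rangle=\tfrac1{n!}\,v^{[i_1]}\cdots v^{[i_n]}$, so for each fixed $x\in T(\R^d)$, which is supported on finitely many words, the map $q_x:v\mapsto\langle\sigma(\ell_v),x\rangle$ is a genuine polynomial on $\R^d$. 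Therefore $M=\{v\in\R^d\mid\ell_v\in\mathcal{V}(W)\}=\{v\in\R^d\mid q_x(v)=0\text{ for all }x\in W\}=V(\{q_x\mid x\in W\})$ is a point variety.

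The argument is short and I do not expect a serious obstacle; the two places asking for a moment's care are that $W$ ranges over the tensor algebra $T(\R^d)$ and not its completion, so each $q_x$ really is a polynomial of bounded degree rather than an infinite series, and that the one-parameter family $(\ell_v)_{v\in\R^d}$ already realizes every possible increment, so that probing $\mathcal{V}(W)$ with it suffices to recover all of $M$. The conceptual content is precisely the duality ``increment $\leftrightarrow$ level-one signature'' together with ``polynomial in the increment $\leftrightarrow$ shuffle polynomial in the letters'', which is why the classical notion of point variety reappears here.
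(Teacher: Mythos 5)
Your proof is correct, and both directions rest on the same two facts the paper uses: polynomials in the increment are shuffle polynomials evaluated on the signature (the ``if'' direction is identical to the paper's), and linear paths, whose signature entries are polynomials in the increment, suffice to probe $\incin{M}$. The only difference is organizational: where the paper intersects $\incin{M}$ with the variety $V^{\text{linear}}$ of linear paths and invokes Proposition~\ref{prop:suboflinear} (every sub path variety of $V^{\text{linear}}$ is $V^{\text{linear}}\cap\incin{N}$ for a point variety $N$, proved there from $\sigma(X)=\exp_{\conc}$ of the increment), you inline exactly that content by parametrizing $\ell_v(t)=tv$ and observing that $v\mapsto\langle\sigma(\ell_v),x\rangle$ is a polynomial $q_x$, so $M=V(\{q_x\mid x\in W\})$. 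Your version is self-contained and slightly more elementary; the paper's factors the key computation into a proposition it reuses elsewhere (e.g.\ in the dimension results). One cosmetic point: your extension of $\varphi$ to polynomials with nonzero constant term (sending $1\mapsto\emptyword$) is harmless but unnecessary, since one may always take the defining polynomials of $M$ inside an ideal and argue with $p-p(0)$ absorbed into the generators as the paper implicitly does.
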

\begin{proof}
 If $M$ is a point variety, and $(p_i)_i$ is a set of polynomials generating the corresponding polynomial ideal,
 then $\incin{M}=\mathcal{V}((\varphi(p_i))_i)$ by the shuffle identity.

 If $\incin{M}$ is a path variety, then if we intersect it with the path variety of linear paths $V^{\text{linear}}$,
 then $\incin{M}\cap V^{\text{linear}}$ is the path variety of linear paths with increment in $M$,
 and by Proposition \ref{prop:suboflinear} this implies that $M$ is a point variety.
\end{proof}

\subsection{Differences to classical algebraic geometry}\label{sec:difftoclassical}

For a single path $X$, the closure $\overline{\{X\}}=\mathcal{V}(\mathcal{I}(\{X\}))$ is the set of all paths tree-like equivalent to $X$.
However, we can recover a topology where singleton sets are closed by looking at the quotient of $\varltwo{\R^d}$ under tree-like equivalence.
The quotient $\varltwogroup{\R^d}:=\{\overline{\{X\}}|X\in\varltwo{\R^d}\}$ has a group structure
given by the concatenation product $\overline{\{X\}}\sqcup\overline{\{Y\}}:=\overline{\{X\sqcup Y\}}$
and inverse given by time reversal $\big(\overline{\{X\}}\big)^{-1}:=\overline{\big\{\overleftarrow{X}\big\}}$,
and is called the reduced path group.
The reduced path of $X$ is the up to reparametrization unique path $\check{X}$ that is tree-like equivalent to $X$ and such that $t\mapsto\sigma(\check{X})_t$ is either constant or injective (see \cite{BGLY16}).

Another new aspect is the fact that we are dealing with a necessarily infinite dimensional setting.
 If a path variety $V$ is finite dimensional (a notion that we define in Section \ref{sec:dim}),
 then $\mathcal{I}(V)$ is infinitely generated as a shuffle ideal, see Theorem \ref{thm:finitegeneratedinfinitedim}.
 Also, if we would just look at the path varieties corresponding to finitely generated ideals,
 we wouldn't get a topology on $\varltwo{\R^d}$
 since an infinite intersection of these can result in a path variety correspondonding to an infinitely generated ideal.

 If $V$ is infinite dimensional, then $\mathcal{I}(V)$ can still be finitely generated or infinitely generated. For example, the path variety of loops $\mathcal{V}(\word{1},\dots,\word{d})$ is infinite dimensional and its ideal finitely generated, while the path variety of all paths with reduced paths in the $x_1=0$ subplane is infinite dimensional with infinitely generated ideal (see Lemma \ref{lem:x_1=0}).

 Furthermore, there are path varieties which are a countable union of disjoint path subvarieties. See Example \ref{ex:circleloops} and more generally Theorem \ref{thm:concpowers}.

 Also, the classical real Nullstellensatz doesn't hold in general because we don't allow for all grouplike elements.

 \subsection{Path varieties are analytically closed}

Point varieties are also closed in the euclidean topology.
We get a similar statement for path varieties.

\begin{theorem}
 Let $p\in[1,2)$.
 If $(X_n)_n$ are paths of bounded $p$-variation in a path variety $V$ converging in $p$-variation norm against a $p$-variation path $X$,
 then $X\in V$.
\end{theorem}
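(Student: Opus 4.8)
The plan is to reduce the statement to the continuity of the signature with respect to $p$-variation convergence, which for $p<2$ is nothing more than the Young integral estimate applied inductively over the levels. Write $V=\mathcal{V}(W)$ for some $W\subseteq T(\mathbb{R}^d)$; then the hypothesis $X_n\in V$ says precisely that $\langle\sigma(X_n),x\rangle=0$ for every $x\in W$ and every $n$, so it would suffice to show $\langle\sigma(X_n),x\rangle\to\langle\sigma(X),x\rangle$ for each fixed $x\in T(\mathbb{R}^d)$; this then gives $\langle\sigma(X),x\rangle=0$ for all $x\in W$, i.e.\ $X\in V$. By linearity of $x\mapsto\langle\sigma(X),x\rangle$ we may take $x=w$ a single word. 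After the usual reduction to a common time domain (convergence in $p$-variation norm presupposes one) we may assume all $X_n$ and $X$ live on $[0,T]$, so that $\|X-X_n\|_{p\text{-var};[0,T]}\to 0$ and $C:=1+\sup_n\|X_n\|_{p\text{-var};[0,T]}<\infty$.

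Next I would prove, by induction on the length of $w$, the stronger assertion that the real-valued paths $t\mapsto\langle\sigma(X_n)_t,w\rangle$ converge on $[0,T]$ in $p$-variation norm to $t\mapsto\langle\sigma(X)_t,w\rangle$, with $p$-variation norms bounded uniformly in $n$ by a constant depending only on $|w|$ and $C$. The cases $w=\emptyword$ (the constant path $1$) and $w=\word{i}$ (the increment of the $i$-th coordinate of the path) are immediate from $\|X-X_n\|_{p\text{-var}}\to 0$. For the inductive step I would use the stopped halfshuffle identity $\langle\sigma(X)_t,w\word{i}\rangle=\int_0^t\langle\sigma(X)_s,w\rangle\,\mathrm{d}X^{[i]}_s$, write the difference of the two iterated Young integrals as
\[
\int_0^{\cdot}\bigl(\langle\sigma(X_n)_s,w\rangle-\langle\sigma(X)_s,w\rangle\bigr)\,\mathrm{d}X_n^{[i]}+\int_0^{\cdot}\langle\sigma(X)_s,w\rangle\,\mathrm{d}\bigl(X_n^{[i]}-X^{[i]}\bigr),
\]
and estimate each summand in $p$-variation norm by the Young bound $\bigl|\int_s^t f\,\mathrm{d}g-f(s)(g_t-g_s)\bigr|\le C_p\|f\|_{p\text{-var};[s,t]}\|g\|_{p\text{-var};[s,t]}$, which holds because $2/p>1$ for $p<2$; summing this along a partition bounds the $p$-variation of the integral path in terms of the $p$-variation norms of integrand and integrator. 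The induction hypothesis together with $\sup_n\|X_n\|_{p\text{-var}}\le C$ then sends both summands to $0$ in $p$-variation norm (for the second, note its integrand is independent of $n$ while $\|X_n^{[i]}-X^{[i]}\|_{p\text{-var}}\to 0$) and supplies the uniform bound on $\|\langle\sigma(X_n)_\cdot,w\word{i}\rangle\|_{p\text{-var}}$. Evaluating at $t=T$ would then give $\langle\sigma(X_n),w\rangle\to\langle\sigma(X),w\rangle$, completing the argument.

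The step I expect to demand the most care is this inductive Young estimate: one must control the full $p$-variation norm of an iterated Young integral---not merely its supremum norm---in terms of the $p$-variation norms of integrand and integrator, uniformly in $n$, and check that the local remainders $f(s)(g_t-g_s)$ assemble correctly when summed over a partition. This is exactly the estimate underlying the continuity of the signature map, and in the regime $p<2$ it needs no rough-path corrections. If one prefers not to reprove it, one may instead quote the known continuity of the truncated signature maps $X\mapsto\proj_{\leq N}\sigma(X)$ on $C^{p\text{-var}}([0,T];\mathbb{R}^d)$ with respect to $p$-variation for $p<2$ (the Young regime of the extension/continuity theorem, e.g.\ in \cite{FrizVictoir10}) and apply it levelwise to obtain $\langle\sigma(X_n),w\rangle\to\langle\sigma(X),w\rangle$ directly.
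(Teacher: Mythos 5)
Your proposal is correct and takes essentially the same route as the paper: reduce to the continuity of $x\mapsto\langle\sigma(X),x\rangle$ under $p$-variation convergence for $p<2$, which the paper obtains by citing \cite[Proposition~6.11]{FrizVictoir10} (the Young-regime continuity of iterated integrals), exactly as your final paragraph suggests. The inductive Young-estimate argument you sketch is a reasonable self-contained proof of that cited continuity statement, so the only difference is that you re-derive the lemma rather than quote it.
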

\begin{proof}
 Like in the proof of Theorem \ref{thm:bdvarconv},
 this follows immediately from the convergence of the iterated integrals according to \cite[Proposition~6.11]{FrizVictoir10}.
\end{proof}

For reference, we also state the following for bounded variation paths,
i.e. paths $X$ with $\|X\|_{1\text{-var}}<\infty$.

\begin{theorem}\label{thm:bdvarconv}
   If $(X_n)_n$ are bounded variation paths in a path variety $V$ converging uniformly and with a joint variation bound towards a bounded variation path $X$,
   then $X\in V$.
  \end{theorem}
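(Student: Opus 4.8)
The plan is to reduce this to the same convergence principle that underlies the preceding theorem, namely that iterated integrals are continuous with respect to the relevant path topology. Concretely, I would fix a word $x\in\ideal(V)$ and show that $\langle\sigma(X_n),x\rangle\to\langle\sigma(X),x\rangle$; since each $X_n\in V$ means $\langle\sigma(X_n),x\rangle=0$, the limit gives $\langle\sigma(X),x\rangle=0$ for all $x\in\ideal(V)$, hence $X\in\variety(\ideal(V))=V$ because $V$ is a path variety.

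First I would note that by assumption there is a uniform bound $\sup_n\|X_n\|_{1\text{-var}}\le L<\infty$ together with $\sup[0,T]\|X_n-X\|_2\to 0$; in particular $X$ itself has $1$-variation at most $L$ by lower semicontinuity of the variation seminorm under pointwise convergence, so $X\in\varltwo{\R^d}$ and $\sigma(X)$ is defined. Then I would invoke the continuity of the signature map for bounded-variation paths under uniform convergence with a uniform variation bound: each fixed-level component $\langle\sigma(\cdot),x\rangle$, being an iterated Riemann--Stieltjes integral of length $|x|$, is continuous on $\{Y:\|Y\|_{1\text{-var}}\le L\}$ in the uniform topology. This is the classical statement; in the $p$-variation framework of \cite{FrizVictoir10} it follows from the estimates controlling iterated integrals by the variation norm (e.g.\ the relevant continuity statement for the signature, in the same spirit as \cite[Proposition~6.11]{FrizVictoir10}), and I would simply cite it the way the companion Theorem does.

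Putting these together: for each $x\in\ideal(V)$ we have $\langle\sigma(X),x\rangle=\lim_n\langle\sigma(X_n),x\rangle=\lim_n 0=0$, so $X\in\variety(\ideal(V))$; since $V$ is a path variety this closure equals $V$, giving $X\in V$. The main obstacle, such as it is, is purely a matter of quoting the right continuity estimate: one must make sure the hypothesis (uniform convergence plus a \emph{joint} variation bound, rather than mere convergence in $1$-variation norm) is exactly what the cited proposition needs — uniform convergence alone is not enough for signature continuity, and this is precisely why the joint bound is built into the hypothesis. No genuinely new argument beyond the analytic-closedness scheme already used above is required.
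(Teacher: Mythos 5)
Your argument is correct and is essentially the paper's own proof: both fix $x\in\mathcal{I}(V)$, invoke the continuity of the iterated integrals under uniform convergence with a joint variation bound (the paper cites \cite[Proposition~6.12]{FrizVictoir10}, the bounded-variation counterpart of the Proposition~6.11 you mention), and conclude that $\langle\sigma(X),x\rangle=\lim_n\langle\sigma(X_n),x\rangle=0$, so $X\in V$. The extra observation about lower semicontinuity of the variation is harmless but unnecessary, since the hypothesis already assumes $X$ is of bounded variation.
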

  \begin{proof}
   For any $x\in\mathcal{I}(V)$,
   we have by \cite[Proposition~6.12]{FrizVictoir10}
   that $0=\langle\sigma(X_n),x\rangle$ converges to $\langle\sigma(X),x\rangle$,
   which thus vanishes too.
  \end{proof}

\section{Concatenation}
\label{sec:conc}

\begin{lemma}
 Let $M, N$ be subsets of $\varltwo{\R^d}$ and $M$ such that it contains all tree-like equivalent paths.
 Then $M\sqcup N$ and $N\sqcup M$ contain all tree-like equivalent paths.
\end{lemma}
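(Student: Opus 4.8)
The plan is to dispatch both $M\sqcup N$ and $N\sqcup M$ with the same device: absorb the ``wrong'' factor of a concatenation into a tree-like out-and-back loop, which is invisible to the signature and hence to tree-like equivalence. Concretely, I would fix $Z\in M\sqcup N$, written $Z=X\sqcup Y$ with $X\in M$ and $Y\in N$, together with an arbitrary path $Z'$ tree-like equivalent to $Z$; the task is to realise $Z'$ inside $M\sqcup N$. (For the statement about $N\sqcup M$ one writes instead $Z=Y\sqcup X$ with $Y\in N$, $X\in M$, and everything below mirrors.) Recall that the signature characterises a path up to tree-like equivalence, so $Z'\sim Z$ gives $\sigma(Z')=\sigma(Z)=\sigma(X)\conc\sigma(Y)$.

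The only ingredient I need is that for every $Y\in\varltwo{\R^d}$ both $Y\sqcup\overleftarrow Y$ and $\overleftarrow Y\sqcup Y$ are tree-like: take the $\mathbb{R}$-tree to be a copy of the trajectory of $Y$ and the continuous map $f$ to run out along $Y$ and back, so that these loops factor through an $\mathbb{R}$-tree; equivalently $\sigma(Y\sqcup\overleftarrow Y)=\sigma(Y)\conc\antipode\sigma(Y)=\emptyword$, and triviality of the signature is the standard characterisation of tree-like paths. Consequently, appending $Y\sqcup\overleftarrow Y$ on the right, or $\overleftarrow Y\sqcup Y$ on the left, changes neither the signature (by Chen's identity) nor the tree-like class of a path. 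Then I would compute $\sigma(Z'\sqcup\overleftarrow Y)=\sigma(Z')\conc\antipode\sigma(Y)=\sigma(X)\conc(\sigma(Y)\conc\antipode\sigma(Y))=\sigma(X)$, so that $Z'\sqcup\overleftarrow Y$ is tree-like equivalent to $X$ and therefore, since $M$ contains all tree-like equivalent paths, $Z'\sqcup\overleftarrow Y\in M$. Hence $(Z'\sqcup\overleftarrow Y)\sqcup Y\in M\sqcup N$; and since $(Z'\sqcup\overleftarrow Y)\sqcup Y=Z'\sqcup(\overleftarrow Y\sqcup Y)$ is $Z'$ followed by the tree-like loop $\overleftarrow Y\sqcup Y$, it is tree-like equivalent to $Z'$. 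The case $N\sqcup M$ is the mirror image: from $\sigma(\overleftarrow Y\sqcup Z')=\antipode\sigma(Y)\conc\sigma(Y)\conc\sigma(X)=\sigma(X)$ one gets $\overleftarrow Y\sqcup Z'\in M$, hence $Y\sqcup(\overleftarrow Y\sqcup Z')\in N\sqcup M$, and $Y\sqcup(\overleftarrow Y\sqcup Z')=(Y\sqcup\overleftarrow Y)\sqcup Z'$ is tree-like equivalent to $Z'$.

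I expect the delicate point to be the last identification rather than any of the algebra: $(Z'\sqcup\overleftarrow Y)\sqcup Y$ is tree-like equivalent to $Z'$ but not literally equal to it, so the construction produces a representative of the tree-like class of $Z'$ inside $M\sqcup N$; reading the conclusion at the level of tree-like classes — equivalently, after passing to the reduced path group $\varltwogroup{\R^d}$ — this is precisely ``$M\sqcup N$ contains all tree-like equivalent paths''. Beyond that, I would only check routine bookkeeping inside $\varltwo{\R^d}$: that time reversal preserves finite $p$-variation, so that $\overleftarrow Y$, $Z'\sqcup\overleftarrow Y$ and $(Z'\sqcup\overleftarrow Y)\sqcup Y$ all genuinely lie in $\varltwo{\R^d}$, and that the reassociations $(Z'\sqcup\overleftarrow Y)\sqcup Y=Z'\sqcup(\overleftarrow Y\sqcup Y)$ are legitimate because $\sqcup$ is associative on honest (unrescaled) paths, the convention fixed in the preliminaries. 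Note that the argument uses only that $M$ is saturated under tree-like equivalence, in accordance with the hypothesis, and nothing at all about $N$.
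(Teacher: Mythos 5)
Your proof is essentially the paper's: for $Z'$ tree-like equivalent to $X\sqcup Y$ with $X\in M$, $Y\in N$, the paper likewise forms $Z'\sqcup\overleftarrow{Y}$, argues it is tree-like equivalent to $X$ and hence lies in $M$ by saturation, reattaches $Y$, and treats $N\sqcup M$ as the mirror image. The only divergence is the step you yourself flag as delicate: the paper writes the conclusion as a literal equality of paths ($X=X\sqcup\overleftarrow{W}\sqcup W$ in its notation), which holds only up to tree-like equivalence, so it is implicitly making exactly the identification of tree-like equivalent paths (i.e.\ working in $\varltwogroup{\R^d}$) that you make explicit; with that reading your argument and the paper's coincide.
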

\begin{proof}
 If $X$ is tree-like equivalent to a path $Y$ in $M\sqcup N$, then there are $Z\in M$, $W\in N$ such that $Y=Z\sqcup W$.
 Then $X\sqcup\overleftarrow{W}\in M$ as it is tree-like equivalent to $Z$ because $X\sqcup\overleftarrow{W}\sqcup \overleftarrow{Z}=X\sqcup\overleftarrow{W}\sqcup W\sqcup \overleftarrow{Y}=X\sqcup \overleftarrow{Y}$.
 Thus $X=X\sqcup\overleftarrow{W}\sqcup W\in M\sqcup N$.
 This proves $M\sqcup N$ contains tree-like equivalent paths.
 The proof for $N\sqcup M$ is completely analogous.
\end{proof}
This in particular shows that
\begin{equation*}
 \overline{\{X\sqcup Y\}}=\overline{\{X\}}\sqcup\overline{\{Y\}}=\overline{\{X\}}\sqcup Y=X\sqcup \overline{\{Y\}}.
\end{equation*}

The following theorem is equivalent to the observation that $Z\mapsto X\sqcup Z$ and $Z\mapsto Z\sqcup X$ are Zariski homeomorphisms for all $X\in\varltwo{\R^d}$.
\begin{theorem}\label{thm:path_conc_variety}
 Let $V$ be a path variety and $X\in\varltwo{\R^d}$. Then $X\sqcup V$ and $V\sqcup X$ are path varieties.
\end{theorem}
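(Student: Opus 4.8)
The statement asserts that for a fixed path $X$, the maps $Z\mapsto X\sqcup Z$ and $Z\mapsto Z\sqcup X$ send path varieties to path varieties. The plan is to produce, for a given path variety $V=\mathcal{V}(W)$, an explicit set of tensor elements whose vanishing along $\sigma(\,\cdot\,)$ cuts out $X\sqcup V$ (and symmetrically $V\sqcup X$). The key tool is Chen's identity $\sigma(X\sqcup Z)=\sigma(X)\conc\sigma(Z)$, which lets us convert a condition on $\sigma(X\sqcup Z)$ into a condition on $\sigma(Z)$ once $\sigma(X)$ is known.

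\textbf{Step 1: reduce to a single word and dualize.} Fix $w\in W$. We want to express $\langle\sigma(X)\conc\sigma(Z),w\rangle$ as a linear functional in $\sigma(Z)$. By the duality of $\conc$ and $\Delta_{\conc}$, $\langle\sigma(X)\conc\sigma(Z),w\rangle=\langle\sigma(X)\otimes\sigma(Z),\Delta_{\conc}w\rangle=\sum_{(w)}^{\conc}\langle\sigma(X),w_1\rangle\langle\sigma(Z),w_2\rangle$. So if we define, for the fixed series $g:=\sigma(X)\in\mathcal{G}_d$, the linear map $\ell_g\colon T(\R^d)\to T(\R^d)$ by $\ell_g(w):=\sum_{(w)}^{\conc}\langle g,w_1\rangle\,w_2$ (this is well-defined since each $\Delta_{\conc}w$ is a finite sum), then $\langle\sigma(X\sqcup Z),w\rangle=\langle\sigma(Z),\ell_g(w)\rangle$ for all $Z$.

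\textbf{Step 2: assemble the variety.} It follows that $X\sqcup V=\{X\sqcup Z\mid Z\in\mathcal{V}(W)\}$. A path $U$ lies in this set iff $U$ is of the form $X\sqcup Z$ for some $Z$ — here one must be slightly careful, because not every path decomposes through $X$ on the left. The cleaner route: observe that $Z\mapsto X\sqcup Z$ is a bijection from $\varltwo{\R^d}$ onto the set $X\sqcup\varltwo{\R^d}$ of paths that begin by running $X$, with inverse $U\mapsto \overleftarrow{X}\sqcup U$ (up to reparametrization / tree-like representatives; alternatively work in $\varltwogroup{\R^d}$). Then $U\in X\sqcup V$ iff $\overleftarrow{X}\sqcup U\in V$, i.e. iff $\langle\sigma(\overleftarrow{X})\conc\sigma(U),w\rangle=0$ for all $w\in W$, i.e. (by Step 1 with $g=\sigma(\overleftarrow{X})=\antipode\sigma(X)$) iff $\langle\sigma(U),\ell_{\antipode\sigma(X)}(w)\rangle=0$ for all $w\in W$. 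Hence
\[
X\sqcup V=\mathcal{V}\big(\{\ell_{\antipode\sigma(X)}(w)\mid w\in W\}\big),
\]
which is a path variety. The argument for $V\sqcup X$ is symmetric, using instead the co-opposite decomposition $\Delta_{\conc}w=\sum_{(w)}^{\conc}w_1\otimes w_2$ paired the other way: $U\in V\sqcup X$ iff $U\sqcup\overleftarrow{X}\in V$, giving $V\sqcup X=\mathcal{V}(\{r_{\antipode\sigma(X)}(w)\mid w\in W\})$ with $r_g(w):=\sum_{(w)}^{\conc}\langle g,w_2\rangle\,w_1$.

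\textbf{Main obstacle.} The genuine technical point is the ``$U=X\sqcup Z$ iff $\overleftarrow{X}\sqcup U\in V$'' equivalence: one direction is immediate from $X\sqcup\overleftarrow{X}$ being tree-like, but the other requires knowing that if $\overleftarrow{X}\sqcup U$ is tree-like equivalent to some $Z\in V$ then $U$ is tree-like equivalent to $X\sqcup Z$, which is exactly the content (via the preceding Lemma and the identity $\overline{\{X\sqcup Y\}}=\overline{\{X\}}\sqcup\overline{\{Y\}}$) that path varieties and the sets $X\sqcup(\cdot)$ respect tree-like equivalence — and that $V$, being a path variety, is closed under tree-like equivalence. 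So I would first record that every path variety contains all paths tree-like equivalent to any of its members (immediate: $\overline{\{Z\}}\subseteq\mathcal{V}(\mathcal{I}(\{Z\}))\subseteq\mathcal{V}(\mathcal{I}(V))=V$), then run the bijection argument cleanly, then read off the two explicit generating sets. Everything else — well-definedness of $\ell_g$, $r_g$, and the Chen computation — is routine.
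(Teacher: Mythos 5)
Your proposal is correct and follows essentially the same route as the paper's proof: the equivalence $U\in X\sqcup V\Leftrightarrow\overleftarrow{X}\sqcup U\in V$ (resting on the preceding lemma about tree-like equivalence), Chen's identity, duality of $\conc$ and $\Delta_{\conc}$, and reading off the explicit defining set, with the same formulas $\sum_{(w)}^{\conc}\langle\antipode\sigma(X),w_1\rangle w_2$ and $\sum_{(w)}^{\conc}w_1\langle\antipode\sigma(X),w_2\rangle$. The only cosmetic differences are that you range over the given defining set $W$ rather than $\mathcal{I}(V)$ (both cut out the same variety) and that you spell out the tree-like-equivalence bookkeeping that the paper leaves implicit.
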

\begin{proof}
 We have $Y\in X\sqcup V$ if and only if $\overleftarrow{X}\sqcup Y\in V$, if and only if
 \begin{align*}
  0&=\langle\sigma(\overleftarrow{X}\sqcup Y),x\rangle=\langle\antipode\sigma(X)\conc\sigma(Y),x\rangle=\langle\antipode\sigma(X)\otimes\sigma(Y),\Delta_{\conc} x\rangle=\sum_{(x)}^{\conc}\langle\antipode\sigma(X),w_1\rangle\langle\sigma(Y),w_2\rangle\\
  &=\langle\sigma(Y),\sum_{(x)}^{\conc}\langle\antipode\sigma(X),w_1\rangle w_2\rangle
 \end{align*}
 for all $x\in\mathcal{I}(V)$.
 Thus $X\sqcup V=\mathcal{V}(\{\sum_{(x)}^{\conc}\langle\antipode\sigma(X),w_1\rangle w_2|x\in\mathcal{I}(V)\})$.
 Analogously, $V\sqcup X=\mathcal{V}(\{\sum_{(x)}^{\conc}w_1\langle\antipode\sigma(X), w_2\rangle|x\in\mathcal{I}(V)\})$.
\end{proof}
Since a path variety $V$ contains tree-like equivalent paths,
we have $X\sqcup V=\overline{\{X\}}\sqcup V$ and $V\sqcup X=V\sqcup \overline{\{X\}}$.

Note that in general, concatenation of two path varieties does not need to be a path variety. Under a meaningful definition of semialgebraic path sets, the concatenation of two path varieties should be a semialgebraic path set though, see Section \ref{sec:semialg}.

\begin{theorem}\label{thm:conc_varieties}
 Let $V$ be a sub path variety of $\varltwo{\R^d}$. Then the set of paths $X$ in $\mathbb{R}^d$ such that $X^{\sqcup n}\in V$ is a path variety, and the set of paths $(X_1,\ldots,X_n)$ in $\mathbb{R}^{nd}$ such that $X_1\sqcup\dots\sqcup X_n\in V$ is also a path variety.
\end{theorem}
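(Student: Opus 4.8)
The plan is to realize both sets as preimages of $V$ under Zariski continuous maps, exploiting Chen's identity $\sigma(X^{\sqcup n}) = \sigma(X)^{\conc n}$, and then to translate the vanishing of a shuffle-ideal element on these preimages into a condition on $\sigma(X)$ itself (resp. on $\sigma(X_1), \dots, \sigma(X_n)$), very much in the style of the proof of Theorem \ref{thm:path_conc_variety} above. Concretely, for the first statement, $X^{\sqcup n} \in V$ holds if and only if $\langle \sigma(X)^{\conc n}, x\rangle = 0$ for all $x \in \mathcal{I}(V)$, and by iterating the duality between $\conc$ and $\Delta_{\conc}$ (using the higher coproducts $\Delta_{\conc}^{n-1}$ and the Sweedler notation $\sum_{(x)}^{\conc n}$ already set up in the preliminaries) this becomes
\begin{equation*}
 0 = \langle \sigma(X)^{\otimes n}, \Delta_{\conc}^{n-1} x\rangle = \sum_{(x)}^{\conc n} \langle \sigma(X), x_1\rangle \cdots \langle \sigma(X), x_n\rangle.
\end{equation*}
The crucial observation is then that, by Ree's shuffle identity, the right-hand side equals $\langle \sigma(X), \mu_n(x)\rangle$, where $\mu_n : T(\R^d) \to T(\R^d)$ is the linear map sending $x$ to $\sum_{(x)}^{\conc n} x_1 \shuffle \cdots \shuffle x_n$ (the $n$-fold shuffle applied to the $n$-fold deconcatenation). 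Hence the set in question is exactly $\mathcal{V}(\{\mu_n(x) \mid x \in \mathcal{I}(V)\})$, a path variety.

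For the second statement, I would work in $\varltwo{\R^{nd}}$, writing a path there as a tuple $(X_1, \dots, X_n)$ of $\R^d$-valued paths sharing a time domain, with the concatenation $X_1 \sqcup \cdots \sqcup X_n$ interpreted after forgetting the extra coordinates. The map $(X_1, \dots, X_n) \mapsto X_1 \sqcup \cdots \sqcup X_n$ sends signatures to $\sigma(X_1) \conc \cdots \conc \sigma(X_n)$ by Chen's identity, where each $\sigma(X_i)$ is obtained from $\sigma(X_1, \dots, X_n)$ by the shuffle-algebra homomorphism deleting all letters outside the $i$-th block of $d$ letters. So again $X_1 \sqcup \cdots \sqcup X_n \in V$ unpacks, via the $\conc$–$\Delta_{\conc}$ duality, into $\sum_{(x)}^{\conc n} \langle \sigma(X_1), x_1\rangle \cdots \langle \sigma(X_n), x_n\rangle = 0$ for all $x \in \mathcal{I}(V)$; pulling the $\sigma(X_i)$ back to $\sigma(X_1,\dots,X_n)$ through the block-projection homomorphisms and using Ree's identity once more rewrites this as the vanishing of a single explicit family of elements of $T(\R^{nd})$ on $\sigma(X_1, \dots, X_n)$, exhibiting the set as a path variety in $\varltwo{\R^{nd}}$. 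Both cases are really instances of the "regular maps are Zariski continuous" principle flagged for Section \ref{sec:morphisms} (indeed $X \mapsto X^{\sqcup n}$ is listed there among the examples of regular maps), and the statement could alternatively be deduced from that once it is available.

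The main obstacle, and the only place real care is needed, is the bookkeeping for the iterated deconcatenation combined with the iterated shuffle: one must check that the composite linear map $x \mapsto \sum_{(x)}^{\conc n} x_1 \shuffle \cdots \shuffle x_n$ is well-defined on all of $T(\R^d)$ (it is, since $\Delta_{\conc}$ is degree-preserving and each $x$ has finite support, so only finitely many terms arise) and that Ree's identity is being applied in the correct multiplicative order — the signature being grouplike means $\langle \sigma(X), u \shuffle v\rangle = \langle \sigma(X), u\rangle \langle \sigma(X), v\rangle$, and an easy induction extends this to $n$-fold shuffles. For the multi-path version, the extra subtlety is verifying that the block-deletion maps $T(\R^{nd}) \to T(\R^d)$ really are shuffle homomorphisms intertwining the signature of $(X_1, \dots, X_n)$ with the signature of $X_i$; this is the standard behaviour of the signature under projections $\R^{nd} \to \R^d$ and is routine, but it is the one structural fact that makes the reduction go through. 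No Nullstellensatz or finiteness input is required, so the argument is purely formal once these identities are in place.
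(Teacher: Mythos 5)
Your proposal is correct and follows essentially the same route as the paper: both arguments use Chen's identity with the iterated deconcatenation $\sum_{(x)}^{\conc n}$ and the shuffle identity to rewrite $\langle\sigma(X^{\sqcup n}),x\rangle$ as $\langle\sigma(X),\sum_{(x)}^{\conc n}w_1\shuffle\cdots\shuffle w_n\rangle$, and for the multi-path case both pull each $\sigma(X_i)$ back to $\sigma(X_1,\dots,X_n)$ via the block embeddings $\iota_i$, exhibiting the defining elements in $T(\R^{nd})$. Your extra remarks on well-definedness and on the projection behaviour of the signature are exactly the routine checks the paper leaves implicit.
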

\begin{proof}
 This follows from the fact that the signature of $X^{\sqcup n}$ is contained in the signature of $X$, and the signature of $X_1\sqcup\dots\sqcup X_n$ is contained in the signature of $(X_1,\dots,X_n)$. To be precise,
 \begin{equation*}
  \langle\sigma(X^{\sqcup n}),x\rangle=\sum_{(x)}^{\conc n}\langle\sigma(X),w_1\rangle\cdots\langle\sigma(X),w_{n}\rangle=\langle\sigma(X),\sum_{(x)}^{\conc n}w_1\shuffle\cdots\shuffle w_{n}\rangle
 \end{equation*}
 and
  \begin{equation*}
  \langle\sigma(X^1\sqcup\dots\sqcup X^n),x\rangle=\sum_{(x)}^{\conc n}\langle\sigma(X_1),w_1\rangle\cdots\langle\sigma(X_n),w_{n}\rangle=\langle\sigma(X_1,\dots,X_n),\sum_{(x)}^{\conc n}\iota_1^\top w_1\shuffle\cdots\shuffle\iota_n^\top w_{n}\rangle
 \end{equation*}
 where $\iota_i$ is linear embedding of $\mathbb{R}^d$ into the $i$-th slot of $\mathbb{R}^{nd}$.
\end{proof}

\section{Halfshuffle varieties}
\label{sec:halfshuffle}

A path variety $V$ is called a $\succ$ path variety if $\mathcal{I}(V)$ is a $\succ$-ideal.
\begin{lemma}\label{lem:x_1=0}
 The set of all paths whose reduced path lies in the $x_1=0$ hyperplane is a path variety with ideal the $\succ$-ideal generated by $\word{1}$.
\end{lemma}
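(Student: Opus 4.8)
The plan is to first pin down the $\succ$-ideal $\mathscr{I}_{\succ}(\word{1})$ completely explicitly, and then to read off both assertions (that the set is a path variety, and that its ideal is $\mathscr{I}_{\succ}(\word{1})$) from that description. The key claim is that $\mathscr{I}_{\succ}(\word{1})$ equals the span $J$ of all words containing the letter $\word{1}$. For ``$\subseteq$'' I would note that $J$ is itself a $\succ$-ideal: for words $u,v$ the halfshuffles $u\succ v$ and $v\succ u$ are supported on interleavings of $u$ and $v$ (their sum being $u\shuffle v$), each of which carries the combined multiset of letters, so if $u$ contains $\word{1}$ then so does every word occurring in $u\succ v$ and in $v\succ u$; extending bilinearly, $a\in J$ forces $a\succ b,b\succ a\in J$ for all $b\in T^{\geq 1}(\R^d)$, and since $\word{1}\in J$ we get $\mathscr{I}_{\succ}(\word{1})\subseteq J$. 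For ``$\supseteq$'' I would induct on word length: $\word{1}\in\mathscr{I}_{\succ}(\word{1})$, and if $w=v\word{a}$ is a word containing $\word{1}$ with $v$ nonempty, then either $v$ already contains $\word{1}$, so $v\in\mathscr{I}_{\succ}(\word{1})$ by induction and $w=v\succ\word{a}\in\mathscr{I}_{\succ}(\word{1})$, or $\word{a}=\word{1}$ and $w=v\succ\word{1}\in\mathscr{I}_{\succ}(\word{1})$ straight from the definition of a $\succ$-ideal.

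Next I would identify the variety: $\mathcal{V}(\mathscr{I}_{\succ}(\word{1}))=\mathcal{V}(J)$ is exactly the set $S$ of paths that are tree-like equivalent to a path lying in the hyperplane $\{x_{1}=0\}$, which is what ``reduced path in $\{x_1=0\}$'' means. If $X\in S$, then $\sigma(X)=\sigma(\check X)$ and, since $\check X^{[1]}$ is constant, every iterated integral against $\check X$ of a word containing $\word{1}$ vanishes, so $\sigma(X)$ annihilates $J$. Conversely, if $\langle\sigma(X),w\rangle=0$ for all words $w$ containing $\word{1}$, let $\tilde Y\in\varltwo{\R^d}$ be the path with $\tilde Y^{[1]}\equiv 0$ and $\tilde Y^{[j]}=X^{[j]}$ for $j\geq 2$ (this has finite $p$-variation since $X$ does); comparing iterated integrals word by word gives $\sigma(\tilde Y)=\sigma(X)$, hence $X$ and $\tilde Y$ are tree-like equivalent by uniqueness of the path up to tree-like equivalence (\cite{BGLY16}). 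As $\tilde Y$ lies in $\{x_{1}=0\}\cong\R^{d-1}$ and tree-like equivalence inside that subspace agrees with tree-like equivalence in $\R^{d}$, the reduced path of $\tilde Y$, hence $\check X$, lies in $\{x_{1}=0\}$, so $X\in S$. Thus $S=\mathcal{V}(\mathscr{I}_{\succ}(\word{1}))$ is a path variety.

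It then remains to compute $\mathcal{I}(S)$, i.e.\ to show $\mathcal{I}(\mathcal{V}(J))=J$. The inclusion $J\subseteq\mathcal{I}(\mathcal{V}(J))$ is automatic. For the reverse, take $x\in\mathcal{I}(\mathcal{V}(J))$ and split $x=x'+x''$ where $x''$ is the part supported on words containing $\word{1}$ (so $x''\in J$) and $x'$ the part supported on words avoiding $\word{1}$. For any $Y'\in\varltwo{\R^{d-1}}$, its inclusion $\tilde Y$ into $\{x_{1}=0\}\subseteq\R^{d}$ lies in $\mathcal{V}(J)$, and $\langle\sigma(\tilde Y),x\rangle=\langle\sigma(Y'),x'\rangle$; since the left side vanishes, $x'$ lies in the common kernel of all signatures of paths in $\R^{d-1}$, which is $\{0\}$ by faithfulness of the signature (e.g.\ via Chen--Chow). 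Hence $x=x''\in J=\mathscr{I}_{\succ}(\word{1})$, which finishes the argument.

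The algebraic identification of $\mathscr{I}_{\succ}(\word{1})$ in the first step, and the faithfulness input in the last step, are routine; the step that needs genuine care is the second, namely the equivalence between ``$\sigma(X)$ kills every word containing $\word{1}$'' and ``the reduced path of $X$ lies in $\{x_{1}=0\}$''. This rests on the tree-like uniqueness theorem for $p<2$ variation paths and on the compatibility of tree-like equivalence with restriction to a coordinate subspace, and that is where I expect the main obstacle to lie.
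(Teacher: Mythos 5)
Your proof is correct and follows essentially the same route as the paper: identify $\mathscr{I}_{\succ}(\word{1})$ with the span of words containing $\word{1}$, show $\mathcal{V}$ of that span consists exactly of the paths whose reduced path lies in $\{x_1=0\}$ by zeroing out the first component (the paper uses the projection $Q$ where you use $\tilde Y$), and recover the ideal via Chen--Chow on paths embedded from $\R^{d-1}$. If anything, your splitting $x=x'+x''$ and invoking faithfulness for general linear combinations is slightly more careful than the paper, which states the Chen--Chow step only for single words avoiding $\word{1}$.
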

\begin{proof}
 The signatures of all paths lying in the $x_1=0$ subspace are exactly those who do not contain the letter $\word{1}$, i.e. they are characterized by $\langle \sigma(X),w\rangle=0$ for all words $w$ containing $\word{1}$.

 Indeed, with $Q:\mathbb{R}^n\to\mathbb{R}^n$ the linear projection on the $x_1=0$ hyperplane, if $\langle\sigma(X),w\rangle=0$ for all words $w$ containing $\word{1}$, then $\langle\sigma(QX),x\rangle=\langle\sigma(X),Q^\top x\rangle=\langle \sigma(X),x\rangle$ for all $x\in T(\mathbb{R}^n)$, implying that $\sigma(X)$ is the signature of the path $QX$ lying in the $x_1=0$ hyperplane.
 Conversely, if $X$ lies in the $x_1=0$ hyperplane, then $\langle\sigma(X),w\rangle=\langle\sigma(QX),w\rangle=\langle\sigma(X),Q^\top w\rangle=0$ for all words $w$ containing $\word{1}$.

 Furthermore, by the Chen-Chow theorem \cite{FrizVictoir10}[Theorem 7.28] and linear embedding of $\mathbb{R}^{n-1}$, for any word $x\in T(\mathbb{R}^n)$ not containing the letter $\word{1}$, there is a path $Y$ in the $x_1=0$ subspace such that $\langle\sigma(Y),x\rangle\neq 0$ .

 The statement follows by the fact that the $\succ$-ideal generated by the letter $\word{1}$ is indeed linearly spanned by all words containing the letter $\word{1}$, since the right halfshuffle of two words where at least one contains the letter $\word{1}$ is a linear combination of words containing the letter $\word{1}$, and each word containing the letter $\word{1}$ can be written as the left bracketed right halfshuffle of its letters, one of which is $\word{1}$.
\end{proof}

\begin{theorem}\label{thm:halfshuffle_variety}
 Whenever a set of paths $M$ contains all left subpaths of reduced paths, $\mathcal{I}(M)$ is a $\succ$-ideal.

 In particular, whenever a path variety $V$ contains all left subpaths of reduced paths, it is a halfshuffle path variety.

 Whenever $I$ is a halfshuffle ideal, $\mathcal{V}(I)$ contains all left subpaths of reduced paths.
\end{theorem}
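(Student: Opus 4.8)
The statement asks for three things, and my plan is to prove them in turn; the first two drop out of the halfshuffle identity, and the third is where the work lies. \emph{For the first two:} assume $M$ contains, with every path, all left subpaths of its reduced path, so that $\check X\in M$ and $\check X|_{[0,s]}\in M$ for every $X\in M$ and every $s$. Let $x\in\mathcal{I}(M)$ and $y\in T^{\geq 1}(\R^d)$. Since the signature depends only on the tree-like equivalence class, for $X\in M$ I would apply the halfshuffle identity not to $X$ but to $\check X$,
\[
 \langle\sigma(X),x\succ y\rangle=\langle\sigma(\check X),x\succ y\rangle=\int_0^{T_{\check X}}\langle\sigma(\check X)_s,x\rangle\,\mathrm{d}\langle\sigma(\check X)_s,y\rangle ,
\]
and observe that $\langle\sigma(\check X)_s,x\rangle=\langle\sigma(\check X|_{[0,s]}),x\rangle=0$ because $\check X|_{[0,s]}\in M$ and $x\in\mathcal{I}(M)$. (Passing to $\check X$ first is essential: $X|_{[0,s]}$ itself need not lie in $M$.) Hence $x\succ y\in\mathcal{I}(M)$, and then $y\succ x=x\shuffle y-x\succ y\in\mathcal{I}(M)$ because $\mathcal{I}(M)$ is a shuffle ideal, so $\mathcal{I}(M)$ is a $\succ$-ideal. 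The ``in particular'' is the case $M=V$, using that a path variety contains all paths tree-like equivalent to any of its elements, hence in particular their reduced paths.

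\emph{For the third part,} let $I$ be a $\succ$-ideal and $X\in\mathcal{V}(I)$; replacing $X$ by $\check X$ (still in $\mathcal{V}(I)$) we must show that $\sigma(\check X)_t$ annihilates $I$ for every $t$, i.e.\ $\check X|_{[0,t]}\in\mathcal{V}(I)$. Write $I=\sum_{w_0\in I}\mathscr{I}_{\succ}(w_0)$. Using $\langle\sigma(Z)_t,a\succ b\rangle=\int_0^t\langle\sigma(Z)_s,a\rangle\,\mathrm{d}\langle\sigma(Z)_s,b\rangle$ for any path $Z$, an induction over the bracketing of $\succ$-monomials shows that vanishing of $\langle\sigma(\check X)_t,v\rangle$ for all $v\in\mathscr{I}_{\succ}(w_0)$ follows once we know $\varphi(t):=\langle\sigma(\check X)_t,w_0\rangle=0$ for all $t$ (a $\succ$-monomial containing $w_0$ is an iterated integral in which some integrand or some integrator is identically zero by induction). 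To get $\varphi\equiv 0$, I would plug the right-combed elements $w_0\succ(w_0\succ(\dots\succ(w_0\succ z)))\in\mathscr{I}_{\succ}(w_0)\subseteq I$, with $k$ copies of $w_0$, into $0=\langle\sigma(\check X),\,\cdot\,\rangle$; iterating the halfshuffle identity turns this into
\[
 \int_0^{T_{\check X}}\varphi(t)^{k}\,\mathrm{d}\langle\sigma(\check X)_t,z\rangle=0\qquad\text{for every }k\geq 1\text{ and every }z\in T^{\geq 1}(\R^d).
\]

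\emph{It remains to deduce $\varphi\equiv 0$} from the vanishing of all these ``moment integrals'' together with the fact that $t\mapsto\sigma(\check X)_t$ is continuous and, being a reduced path, either constant (then $\varphi\equiv 0$ trivially) or injective. The idea is that the displayed identities force each integrator $\mathrm{d}\langle\sigma(\check X)_t,z\rangle$ to be concentrated on the closed set $\{\varphi=0\}$: in the bounded variation case this is precisely the statement that the pushforward under $\varphi$ of the signed measure $\mathrm{d}\langle\sigma(\check X)_t,z\rangle$ has all moments of order $\geq 1$ equal to zero, hence equals a multiple of $\delta_0$ by determinacy of the moment problem on a compact interval; consequently $t\mapsto\langle\sigma(\check X)_t,z\rangle$ is locally constant on the open set $\{\varphi\neq 0\}$ for every $z$, so $t\mapsto\sigma(\check X)_t$ is constant on any nondegenerate subinterval of $\{\varphi\neq 0\}$, which by injectivity forces $\{\varphi\neq 0\}=\emptyset$. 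This yields $\langle\sigma(\check X)_t,v\rangle=0$ for all $v\in I$ and all $t$, i.e.\ $\check X|_{[0,t]}\in\mathcal{V}(I)$, as claimed. The hard part is exactly this last analytic step on $\varltwo{\R^d}$, where the integrators are Young integrals rather than genuine measures: the pushforward-measure argument must then be replaced by a continuity argument for the Young integral --- extending the vanishing from polynomials in $\varphi$ to $C^1$ test functions of $\varphi$ and then localising --- and it is here that the estimates need care.
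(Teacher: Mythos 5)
Your first two parts coincide with the paper's own argument (the halfshuffle identity applied along the reduced path, whose left subpaths lie in $M$ by hypothesis); the only detail you skip is the observation that the zero path lies in $M$, which forces $\mathcal{I}(M)\subseteq T^{\geq 1}(\R^d)$ so that $x\succ y$ is even defined. The third part, however, contains a genuine gap. From the right-combed elements $w_0\succ(\dots\succ(w_0\succ z))$ you obtain exactly the moment identities $\int_0^T\varphi(t)^k\,\mathrm{d}\langle\sigma(\check X)_t,z\rangle=0$, i.e.\ that the pushforward under $\varphi$ of each signed ``measure'' $\mathrm{d}\langle\sigma(\check X)_t,z\rangle$ is a multiple of $\delta_0$. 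This does \emph{not} imply that $t\mapsto\langle\sigma(\check X)_t,z\rangle$ is locally constant on $\{\varphi\neq 0\}$: a signed measure can have pushforward vanishing away from $0$ through cancellation between different points of a level set $\{\varphi=s\}$, and the only test functions your identities supply are of the form $h\circ\varphi$, which are constant on level sets and hence cannot localise to a single component of $\{\varphi\neq 0\}$. So the pivotal inference fails as stated even for bounded variation paths; and for the actual setting $\varltwo{\R^d}$ you explicitly leave the Young-integral version of the argument open, so the theorem is not established in the regime in which it is stated. The gap is repairable in spirit: since a two-sided $\succ$-ideal is automatically a shuffle ideal, the elements $(w_0^{\shuffle k}\shuffle u)\succ z\in I$ yield the richer identities $\int_0^T\varphi(t)^k\,\langle\sigma(\check X)_t,u\rangle\,\mathrm{d}\langle\sigma(\check X)_t,z\rangle=0$, and the coordinates $\langle\sigma(\check X)_t,u\rangle$ separate times by the injectivity of $t\mapsto\sigma(\check X)_t$, which is exactly the localisation tool your argument is missing --- but none of this is in your text.

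For comparison, the paper sidesteps this analysis entirely: for $x\in I$ it forms the augmented path $Y_t=(\langle\sigma(X)_t,x\rangle,X_t)$, uses $\Lambda_B^*$ (Theorem~8 of \cite{colmenarejopreiss20}) to show that $\sigma(Y)$ annihilates the $\succ$-ideal generated by $\word{1}$, and then invokes Lemma~\ref{lem:x_1=0} --- which rests on the uniqueness theorem of \cite{BGLY16} --- to conclude that $\check Y$ lies in the $x_1=0$ hyperplane and hence $\langle\sigma(\check X)_t,x\rangle=0$ for all $t$. That route trades your hard analytic step for the known signature-uniqueness result; if you want a self-contained analytic proof along your lines, you must both enlarge the family of test elements as above and carry out the Young-regime continuity estimates you currently only gesture at.
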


\begin{proof}
 Let $M$ be a set of paths containing all left subpaths of reduced paths. Since this means the zero path is in $M$, we must have $\langle \emptyword,x\rangle=0$ for all $x\in\mathcal{I}(M)$, thus $\mathcal{I}(M)\subseteq  T^{\geq 1}(\R^d)$.
 Let $x\in\mathcal{I}(M)$. Then $\langle\sigma(X)_t,x\rangle=0$ for all reduced paths $X\in\mathcal{I}(V)$ and all $t\in[0,T]$. Thus for $y\in T^{\geq 1}(\R^d)$, $\langle X,x\succ y\rangle=\int_0^T\langle \sigma(X)_t,x\rangle d \langle\sigma(X)_t,y\rangle=0$. Since the reduced path of a path has the same signature, we have $\langle Y,x\succ y\rangle=0$ for any path in $M$, and thus $x\succ y\in \mathcal{I}(M)$. Since $\mathcal{I}(M)$ is a shuffle ideal, we also have $x\shuffle y\in\mathcal{I}(M)$ and thus $y\succ x\in\mathcal{I}(M)$.

Whenever we have a reduced path $X$ in a path variety $\mathcal{V}(I)$ where $I$ is a halfshuffle ideal and contains $x$, the path augmented by $\langle\sigma(X)_t,x\rangle$ in the first component, which we call $Y$, will be in the variety corresponding to the halfshuffle ideal generated by $\word 1$.
 Indeed, by \cite{colmenarejopreiss20}[Theorem 8], with $B:\,\word{1}\to x, \word{i}\to\word{i-1}$, we have that $\Lambda_B$ maps the ideal generated by $\word{1}$ to the ideal generated by $x$, and thus $\langle\sigma(Y),y\rangle=\langle\sigma(X),\Lambda_B y\rangle=0$ for any $y$ in the halfshuffle ideal generated by $\word{1}$.

 Since $X$ is reduced, $Y$ is also reduced, and so by Lemma \ref{lem:x_1=0}, this implies that $Y$ lies in the $x_1=0$ subspace,
and we get that $\langle\sigma(X)_t,x\rangle=0$ for all $t$.
 As this argument works for any $x$ in the halfshuffle ideal $I$,
 we have that any left subpath of $X$ is an element of $\mathcal{V}(I)$.
 \end{proof}

 \begin{remark}
The first part of the Theorem does not hold for rough paths.  Indeed, look at the set $M$ of rough paths such that $\langle\mathbf{X}_t,\word{1}\rangle=0$ for all $t$. $M$ contains all left subpaths of reduced paths, but also $\exp_{\conc}(t(\word{12}-\word{21}))$, so $\mathcal{I}(M)$ is not a halfshuffle ideal, since $\word{1}\in\mathcal{I}(M)$ but $\word{12}\notin\mathcal{I}(M)$.
\end{remark}

 \begin{corollary}
 Whenever a set of paths $M$ contains all right subpaths of reduced paths, $\mathcal{I}(M)$ is a $\prec$-ideal.

 Whenever $I$ is a $\prec$-ideal, $\mathcal{V}(I)$ contains all right subpaths of reduced paths.
\end{corollary}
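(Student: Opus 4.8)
The plan is to deduce the corollary from Theorem~\ref{thm:halfshuffle_variety} by conjugating with time reversal and invoking the antipode identity~\eqref{eq:antipodehalfshuffle}. Write $\overleftarrow{M}:=\{\overleftarrow{X}\,|\,X\in M\}$. First I would set up a short dictionary. (i)~Time reversal is an involution on $\varltwo{\R^d}$ that sends reduced paths to reduced paths: for a reduced $W:[0,T]\to\R^d$, Chen's identity gives $\sigma(\overleftarrow{W})_t=\antipode\sigma(W)\conc\sigma(W)_{T-t}$, so $t\mapsto\sigma(\overleftarrow{W})_t$ is injective (resp.\ constant) exactly when $t\mapsto\sigma(W)_t$ is, since $\antipode$ and left concatenation by a fixed grouplike element are bijections. (ii)~Under time reversal, a right subpath of a path becomes a left subpath of the reversed path and conversely: $\overleftarrow{W|_{[t,T]}}=\overleftarrow{W}|_{[0,T-t]}$ up to reparametrization. (iii)~From $\sigma(\overleftarrow{X})=\antipode\sigma(X)$ and $\antipode^2=\id$ we obtain $\langle\sigma(X),x\rangle=\langle\sigma(\overleftarrow{X}),\antipode x\rangle$ for all $x\in T(\R^d)$, whence $\mathcal{I}(M)=\antipode\,\mathcal{I}(\overleftarrow{M})$ and $\mathcal{V}(I)=\overleftarrow{\mathcal{V}(\antipode I)}$.

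Next I would record the purely algebraic observation that $\antipode$ interchanges $\succ$-ideals and $\prec$-ideals. If $I$ is a $\succ$-ideal, $a=\antipode x\in\antipode I$ with $x\in I$, and $b\in T^{\geq 1}(\R^d)$, then setting $c:=\antipode b\in T^{\geq 1}(\R^d)$ and using~\eqref{eq:antipodehalfshuffle} gives $a\prec b=\antipode(x\succ c)\in\antipode I$ and $b\prec a=\antipode(c\succ x)\in\antipode I$; hence $\antipode I$ is a $\prec$-ideal. Running the same computation with $\succ$ and $\prec$ swapped gives the converse, and $\antipode^2=\id$ makes these mutually inverse.

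Finally I would assemble the two assertions. For the first: if $M$ contains all right subpaths of reduced paths, then by (i) and (ii) the set $\overleftarrow{M}$ contains all left subpaths of reduced paths, so Theorem~\ref{thm:halfshuffle_variety} makes $\mathcal{I}(\overleftarrow{M})$ a $\succ$-ideal, and then $\mathcal{I}(M)=\antipode\,\mathcal{I}(\overleftarrow{M})$ is a $\prec$-ideal by the observation above. For the second: if $I$ is a $\prec$-ideal, then $\antipode I$ is a $\succ$-ideal, so Theorem~\ref{thm:halfshuffle_variety} gives that $\mathcal{V}(\antipode I)$ contains all left subpaths of reduced paths; if $Z=W|_{[t,T]}$ is a right subpath of a reduced path $W$, then $\overleftarrow{Z}=\overleftarrow{W}|_{[0,T-t]}$ is a left subpath of the reduced path $\overleftarrow{W}$, hence $\overleftarrow{Z}\in\mathcal{V}(\antipode I)$ and therefore $Z\in\overleftarrow{\mathcal{V}(\antipode I)}=\mathcal{V}(I)$.

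I do not expect a genuine obstacle here: the mathematical content is Theorem~\ref{thm:halfshuffle_variety} together with~\eqref{eq:antipodehalfshuffle}, and the rest is bookkeeping with time reversal. The one point that deserves explicit care is dictionary entry~(i) --- that the time reversal of a reduced path is again reduced --- since ``left/right subpath of a reduced path'' refers to genuine reduced paths rather than to tree-like equivalence classes; the signature identity above settles it but should not be left implicit.
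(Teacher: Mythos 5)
Your proof is correct and follows essentially the same route as the paper's: conjugate with time reversal, use that $\antipode$ interchanges $\succ$-ideals and $\prec$-ideals via~\eqref{eq:antipodehalfshuffle}, and invoke Theorem~\ref{thm:halfshuffle_variety}. The one thing you make explicit that the paper leaves tacit --- that time reversal of a reduced path is again reduced, via $\sigma(\overleftarrow{W})_t=\antipode\sigma(W)\conc\sigma(W)_{T-t}$ --- is a genuinely worthwhile addition.
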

\begin{proof}
 A set of paths $M$ contains all right subpaths of reduced paths if and only if $\overleftarrow{M}$ contains all left subpaths of reduced paths. We have $\mathcal{I}(\overleftarrow{M})=\antipode\mathcal{I}(M)$ and $\mathcal{V}(\antipode I)=\overleftarrow{\mathcal{V}(I)}$. Using Theorem \ref{thm:halfshuffle_variety}, the corollary then follows from the fact that $\antipode(x\prec y)=\antipode(x)\succ\antipode(y)$ and bijectivity of the antipode, meaning that $\antipode I$ is a $\prec$-ideal if and only if $I$ is a $\succ$-ideal.
\end{proof}
 \begin{corollary}
 Whenever a set of paths $M$ contains all subpaths of reduced paths, $\mathcal{I}(M)$ is a $\succ$-ideal and a $\prec$-ideal.

 Whenever $I$ is a $\succ$-ideal and a $\prec$-ideal, $\mathcal{V}(I)$ contains all subpaths of reduced paths.
\end{corollary}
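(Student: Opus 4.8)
The plan is to derive this corollary purely by combining Theorem~\ref{thm:halfshuffle_variety} with the preceding corollary on $\prec$-ideals, exploiting the elementary fact that an arbitrary subpath factors as a right subpath of a left subpath.

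For the first assertion, suppose $M$ contains all subpaths $\check X\prestr{[s,t]}$ of reduced paths. Specialising $s=0$ shows that $M$ contains all left subpaths of reduced paths, so $\mathcal I(M)$ is a $\succ$-ideal by Theorem~\ref{thm:halfshuffle_variety}; specialising $t$ to the terminal time shows that $M$ contains all right subpaths of reduced paths, so $\mathcal I(M)$ is a $\prec$-ideal by the preceding corollary. Hence $\mathcal I(M)$ is an ideal under both halfshuffles.

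For the converse, let $I$ be at once a $\succ$-ideal and a $\prec$-ideal, let $X\in\mathcal V(I)$, and let $\check X\prestr{[s,t]}$ with $0\le s\le t\le T$ be an arbitrary subpath of $\check X$. I would first pass to the left subpath $Y:=\check X\prestr{[0,t]}$, which lies in $\mathcal V(I)$ by Theorem~\ref{thm:halfshuffle_variety}. Next I would observe that $Y$ is itself a reduced path: for $t'\le t$ one has $\sigma(Y)_{t'}=\sigma(\check X)_{t'}$, so $t'\mapsto\sigma(Y)_{t'}$ inherits the injective-or-constant property of $t'\mapsto\sigma(\check X)_{t'}$. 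Applying the preceding corollary (for the $\prec$-ideal $I$) to the reduced path $Y\in\mathcal V(I)$ then yields that every right subpath of $Y$ lies in $\mathcal V(I)$; in particular $Y\prestr{[s,t]}=\check X\prestr{[s,t]}\in\mathcal V(I)$. As $s,t$ were arbitrary, $\mathcal V(I)$ contains all subpaths of reduced paths.

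I do not expect a genuine obstacle here; the argument is bookkeeping on top of the two preceding results. The only point requiring a moment of care is the remark that a left subpath of a reduced path is again reduced, which is what lets the $\prec$-corollary be applied to $Y$ verbatim, together with the fact that $\mathcal V(I)$, being a path variety, contains all tree-like equivalent paths and thus absorbs the reparametrisation ambiguity in speaking of "the" reduced path. One could alternatively route the converse through the antipode, as in the proof of the $\prec$-corollary, by realising an arbitrary subpath as a time reversal of a left subpath of a left subpath of $\overleftarrow{\check X}$, but the direct factorisation above seems cleanest.
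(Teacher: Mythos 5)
Your argument is correct and essentially the same as the paper's: both directions reduce to Theorem~\ref{thm:halfshuffle_variety} and the $\prec$-corollary, with an arbitrary subpath factored through an intermediate one-sided subpath (you take a right subpath of a left subpath, the paper a left subpath of a right subpath, which is immaterial). Your explicit check that a left subpath of a reduced path is again reduced is a small welcome addition the paper leaves implicit, not a different route.
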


\begin{proof}
 Suppose $M$ contains all subpaths of reduced paths. Then by the previous theorems, $\mathcal{I}(M)$ is a $\succ$-ideal and a $\prec$-ideal.

 Suppose $I$ is a $\succ$-ideal and a $\prec$-ideal. Then by the previous thereoms, $\mathcal{V}(I)$ contains all left subpaths and all right subpaths of reduced subpaths. But any subpath can be represented as a left subpath of a right subpath, so $\mathcal{V}(I)$ must contain all subpaths.
\end{proof}

 \begin{corollary}\label{cor:pathsinvariety}
  Let $p:\mathbb{R}^n\to\mathbb{R}^m$ be a polynomial map with $p(0)=0$. Let $I$ be the halfshuffle path variety generated from the $\varphi(p_i)$, $i=1,\dots,m$. Then $\mathcal{V}(I)$ is the path variety of all paths whose reduced paths with start point in $0$ lie in the zero locus of $p$.
 \end{corollary}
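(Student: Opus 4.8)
The plan is to anchor the corollary on two elementary facts and then let Theorem \ref{thm:halfshuffle_variety} carry the only nontrivial direction. Write $I=\mathscr{I}_{\succ}(\{\varphi(p_1),\dots,\varphi(p_m)\})$ for the halfshuffle ideal in question. The first fact is that for any path $Z$ and any polynomial $q$ with $q(0)=0$ one has $\langle\sigma(Z)_t,\varphi(q)\rangle=q(Z_t-Z_0)$ for all $t$; the second is that the set of words annihilated by the entire stopped trajectory of $Z$ is automatically a $\succ$-ideal. Granting these, the claimed equality $\mathcal{V}(I)=\{X : \text{the reduced path of }X\text{ started at }0\text{ lies in }V(p)\}$ splits into two inclusions.

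First I would record the identity $\langle\sigma(Z)_t,\varphi(q)\rangle=q(Z_t-Z_0)$. Since $\varphi$ is by construction the shuffle-algebra homomorphism sending $x_j$ to $\word{j}$, and $\langle\sigma(Z)_t,\word{j}\rangle=Z^{[j]}_t-Z^{[j]}_0$, Ree's shuffle identity gives the claim; the hypothesis $q(0)=0$ just removes the constant term, so no empty-word term arises. Hence, putting $J_Z:=\{x\in T(\R^n)\mid\langle\sigma(Z)_t,x\rangle=0\text{ for all }t\}$, the requirement that $p(Z_t-Z_0)=0$ for all $t$ is precisely the requirement that $\varphi(p_i)\in J_Z$ for $i=1,\dots,m$.

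Next I would check that $J_Z$ is a $\succ$-ideal. Evaluating at $t=0$ gives $\langle\emptyword,x\rangle=0$ for every $x\in J_Z$, so $J_Z\subseteq T^{\geq 1}(\R^n)$ and halfshuffles of its elements are defined. For $x\in J_Z$ and $y\in T^{\geq 1}(\R^n)$, the halfshuffle identity $\langle\sigma(Z)_s,x\succ y\rangle=\int_0^s\langle\sigma(Z)_t,x\rangle\,\mathrm d\langle\sigma(Z)_t,y\rangle$ gives $x\succ y\in J_Z$, while $y\succ x=x\shuffle y-x\succ y$ together with $\langle\sigma(Z)_s,x\shuffle y\rangle=\langle\sigma(Z)_s,x\rangle\langle\sigma(Z)_s,y\rangle=0$ gives $y\succ x\in J_Z$. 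The inclusion $\supseteq$ is then immediate: if the reduced path of $X$ translated to start at $0$ lies in $V(p)$, then $\varphi(p_i)\in J_{\check{X}}$ for all $i$, so $I\subseteq J_{\check{X}}$ because $J_{\check{X}}$ is a $\succ$-ideal, and evaluating at the terminal time — using $\sigma(X)=\sigma(\check{X})$ — yields $\langle\sigma(X),x\rangle=0$ for all $x\in I$, i.e.\ $X\in\mathcal{V}(I)$.

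For the reverse inclusion $\subseteq$, take $X\in\mathcal{V}(I)$. Since each $\varphi(p_i)$ lies in the halfshuffle ideal $I$, Theorem \ref{thm:halfshuffle_variety} — in the sharper form extracted from its proof, namely that $\langle\sigma(\check{X})_t,x\rangle=0$ for every $x\in I$ and every $t$ — applied with $x=\varphi(p_i)$ gives $p_i(\check{X}_t-\check{X}_0)=0$ for all $t$ and all $i$, so the reduced path of $X$ started at $0$ lies in $V(p)$; and since $\mathcal{V}(I)$ sees $X$ only through $\sigma(X)=\sigma(\check{X})$, it is closed under tree-like equivalence, so the described set is exactly $\mathcal{V}(I)$. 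I expect this to be the only genuine obstacle, since it is here that one must upgrade the vanishing of $\langle\sigma(X),\varphi(p_i)\succ y\rangle$ for all $y$ to the pointwise-in-$t$ vanishing of $\langle\sigma(\check{X})_t,\varphi(p_i)\rangle$ — the substance of Theorem \ref{thm:halfshuffle_variety}, obtained there via the $\Lambda^*_B$ trick, Lemma \ref{lem:x_1=0} and the injectivity or constancy of $t\mapsto\sigma(\check{X})_t$ — and at this point it can simply be invoked rather than reproved. The remaining steps are just the shuffle/halfshuffle bookkeeping above.
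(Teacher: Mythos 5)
Your proof is correct and follows essentially the same route as the paper: both inclusions rest on the evaluation identity $\langle\sigma(Z)_t,\varphi(q)\rangle=q(Z_t-Z_0)$ together with Theorem \ref{thm:halfshuffle_variety}, whose hard half (pointwise-in-$t$ vanishing along the reduced path for elements of a halfshuffle ideal) you invoke exactly as the paper does. The only cosmetic difference is that for the easy inclusion you reprove the halfshuffle-ideal property pathwise for $J_{\check{X}}$ rather than citing the first part of Theorem \ref{thm:halfshuffle_variety} applied to the set of paths staying in the zero locus, which is the same computation in slightly different packaging.
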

 \begin{proof}
  Let $M$ be the set of paths whose reduced paths lie in the zero locus of $p$. Obviously, then $\langle\sigma(X),\varphi(p_i)\rangle=0$ for $X\in M$, and since all left subpaths lie in the zero locus too, by Theorem \ref{thm:halfshuffle_variety} we know that $\mathcal{I}(M)$ must contain $I$. Thus $M$ is contained in $\mathcal{V}(I)$.

  On the other hand,
  for any path $X$ in $\mathcal{V}(I)$ we must have $\langle\sigma(X),\varphi(p_i)\rangle=0$ by definition of $I$,
  and since $I$ is a halfshuffle ideal,
  by Theorem \ref{thm:halfshuffle_variety} this must also hold for all left subpaths of reduced paths in $\mathcal{V}(I)$,
  meaning all reduced paths of paths in $\mathcal{V}(I)$ lie in the zero locus of $p$,
  and thus $\mathcal{V}(I)$ is contained in $M$.
 \end{proof}
 \begin{remark}
  Corollary \ref{cor:pathsinvariety} allows us to easily define rough paths living on an affine point variety
  by demanding that their signature is orthogonal to all $x\in\mathscr{I}(\varphi(p_i),i)$,
  where the $(p_i)_i$ are generators of the ideal corresponding point variety.
  Indeed this is justified by the fact that all geometric rough paths are limits of smooth paths under a suitable topology,
  and so it makes sense to demand that geometric rough paths living on an affine point variety should be those
  which are limits of smooth paths living on that point variety.
  This is an important application that will be explored in future work,
  also in comparison to existing notions of rough paths on manifolds.
 \end{remark}

 \begin{example}\label{ex:circleloops}
 The set $\pathsin{\partial B_1(1,0)}$ of paths $X$ in $\varltwo{\R^2}$ such that $X-X_0$ lies up to tree-like parts on the unit
circle with center in $(1,0)$ is a path variety.
As the circle's equation is given by $x^2-2x+y^2=(x-1)^2+y^2-1=0$,
the variety is characterized by the ideal $\langle\word{11}-\word{1}+\word{22}\rangle_{\succ}$.
Besides the generator, examples of the elements of the ideal are
\begin{equation*}
 \word{111}-\word{11}+\word{221},\,2\cdot\word{111}-\word{11}+\word{122}+\word{212},\,\word{112}-\word{12}+\word{222},\,\dots
\end{equation*}
Note that $\mathcal{I}(\pathsin{\partial B_1(1,0)})$, the set of all relations for $\pathsin{\partial B_1(1,0)}$, could possibly be larger than \mbox{$\langle\word{11}-\word{1}+\word{22}\rangle_{\succ}$}.
A sub path variety of containing countably infinitly many reduced paths is given by
restricting to loops, i.e. $\pathsin{\partial B_1(1,0)}\cap\loops{1}$.
\end{example}
 Note that if $V=\mathcal{V}(\mathscr{I}_{\succ}(\varphi(p_i),i))$ is the path variety of all paths whose reduced path with start point $0$ lies in the zero locus of the polynomial map $p$, then $\overleftarrow{V}=\mathcal{V}(\mathscr{I}_{\prec}(\mathcal{A}\varphi(p_i),i))$ is the variety of all paths whose reduced path with end point $0$ lies in the zero locus of the polynomial map $p$.

 It remains open for now whether the set of all paths whose reduced paths lies in a given point variety, regardless where on the point variety the starting or endpoint is, does form a path variety.
 It does form a semialgebraic path set according to Definition \ref{def:semialg_conc} though,
 as $\mathcal{V}(\mathscr{I}_{\prec}(\mathcal{A}\varphi(p_i),i))\sqcup\mathcal{V}(\mathscr{I}_{\succ}(\varphi(p_i),i))$ is the the set of all paths whose reduced paths lies in the connected component around $0$ of the zero locus of $p$,
 and then we can just take the union over the finitely many connected components.

 Let us finally summarize how the left and right halfshuffles
 allow us to ensure that certain algebraic relations hold
 not only for the full paths,
 but for certain subpaths as well.
 \begin{corollary}
  Let $S\subseteq T^{\geq 1}(\mathbb{R}^d)$. Then
  \begin{enumerate}
   \item $\mathcal{V}(\mathscr{I}_{\succ}(S))$ is the path variety of all $X$ such that $\langle \sigma(Y),x\rangle=0$ for all $x\in S$ and all left subpaths $Y$ of the reduced path of $X$,
   \item $\mathcal{V}(\mathscr{I}_{\prec}(S))$ is the path variety of all $X$ such that $\langle \sigma(Y),x\rangle=0$ for all $x\in S$ and all right subpaths $Y$ of the reduced path of $X$,
   \item $\mathcal{V}(\mathscr{I}_{\succ}(S)\cup \mathscr{I}_{\prec}(S))=\mathcal{V}(\mathscr{I}_{\succ}(S))\cap \mathcal{V}(\mathscr{I}_{\prec}(S))$ is the path variety of all $X$ such that $\langle \sigma(Y),x\rangle=0$ for all $x\in S$ and all left or right subpaths $Y$ of the reduced path of $X$,
   \item $\mathcal{V}(\mathscr{I}_{\succ,\prec}(S))$ is the path variety of all $X$ such that $\langle \sigma(Y),x\rangle=0$ for all $x\in S$ and all subpaths $Y$ of the reduced path of $X$.
  \end{enumerate}

 \end{corollary}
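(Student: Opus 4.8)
The plan is to prove item (1) in full and then obtain (2), (3), (4) from it with minimal extra work. For (1) put $V:=\mathcal{V}(\mathscr{I}_{\succ}(S))$ and let $M$ be the set of all $X\in\varltwo{\R^d}$ such that $\langle\sigma(Y),x\rangle=0$ for every $x\in S$ and every left subpath $Y$ of the reduced path $\check X$; the aim is $V=M$. The first thing I would record is a bookkeeping fact: a left subpath of a reduced path is again reduced (stopping at an earlier time merely restricts the signature path, which stays injective or constant), hence a left subpath of a left subpath of $\check X$ is again a left subpath of $\check X$. Consequently $M$ contains, along with any $X$, every left subpath of $\check X$, so $M$ contains all left subpaths of reduced paths in the sense of Theorem~\ref{thm:halfshuffle_variety}. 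Taking $Y=\check X$ shows $\langle\sigma(X),x\rangle=0$ for all $x\in S$ and all $X\in M$, so $S\subseteq\mathcal{I}(M)$; by Theorem~\ref{thm:halfshuffle_variety} the ideal $\mathcal{I}(M)$ is then a $\succ$-ideal, hence contains $\mathscr{I}_{\succ}(S)$, whence $M\subseteq\mathcal{V}(\mathcal{I}(M))\subseteq\mathcal{V}(\mathscr{I}_{\succ}(S))=V$.

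For the reverse inclusion I would use the last clause of Theorem~\ref{thm:halfshuffle_variety}: since $\mathscr{I}_{\succ}(S)$ is a halfshuffle ideal, $V$ contains every left subpath of the reduced path of each of its elements. Thus if $X\in V$ and $Y$ is a left subpath of $\check X$, then $Y\in V$, so $\langle\sigma(Y),x\rangle=0$ for all $x\in\mathscr{I}_{\succ}(S)\supseteq S$; this is exactly the condition $X\in M$. Combining the two inclusions gives $V=M$, which is (1).

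Part (2) follows by running the same two-inclusion argument verbatim with the first corollary to Theorem~\ref{thm:halfshuffle_variety} (relating $\prec$-ideals and right subpaths) in place of the theorem; equivalently one can deduce it from (1) via $\overleftarrow{\mathcal{V}(I)}=\mathcal{V}(\antipode I)$ together with $\antipode\mathscr{I}_{\prec}(S)=\mathscr{I}_{\succ}(\antipode S)$ (using $\antipode(x\prec y)=\antipode x\succ\antipode y$ and bijectivity of $\antipode$) and the facts that time reversal carries right subpaths of $\check X$ to left subpaths of $\overleftarrow{\check X}=\check{\overleftarrow X}$ and that $\langle\sigma(\overleftarrow Y),s\rangle=\langle\sigma(Y),\antipode s\rangle$. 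For (3), the set equality $\mathcal{V}(\mathscr{I}_{\succ}(S)\cup\mathscr{I}_{\prec}(S))=\mathcal{V}(\mathscr{I}_{\succ}(S))\cap\mathcal{V}(\mathscr{I}_{\prec}(S))$ is immediate from the definition of $\mathcal{V}$, and intersecting the characterizations from (1) and (2) yields precisely the paths $X$ for which $\langle\sigma(Y),x\rangle=0$ holds for all $x\in S$ and all $Y$ that are a left or a right subpath of $\check X$. Finally (4) repeats the scheme of (1) with $M$ now the set of $X$ all of whose subpaths of $\check X$ annihilate $S$: a subpath of a subpath is a subpath, so $M$ contains all subpaths of reduced paths, hence by the corresponding corollary $\mathcal{I}(M)$ is simultaneously a $\succ$- and a $\prec$-ideal and so contains $\mathscr{I}_{\succ,\prec}(S)$, giving $M\subseteq\mathcal{V}(\mathscr{I}_{\succ,\prec}(S))$; conversely $\mathscr{I}_{\succ,\prec}(S)$ being both a $\succ$- and a $\prec$-ideal, its variety contains all subpaths of reduced paths of its elements, forcing every such $X$ into $M$.

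I expect no step to involve a genuine computation; the one place that needs care — and the point I would make explicit — is the bookkeeping around reduced paths, namely that left, right, and arbitrary subpaths of a reduced path are themselves reduced, so that each of the four families appearing in the statement is closed under passing to further subpaths of the appropriate type. That closure property is exactly the hypothesis under which Theorem~\ref{thm:halfshuffle_variety} and its corollaries apply to the auxiliary sets $M$, and all four items ultimately rest on it.
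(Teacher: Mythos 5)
Your argument is correct and is essentially the paper's own: the paper proves this corollary "analogous to the proof of the previous corollary," i.e.\ by the same two-inclusion argument via Theorem~\ref{thm:halfshuffle_variety} and its $\prec$- and two-sided corollaries that you carry out. Your explicit bookkeeping that (left/right/arbitrary) subpaths of a reduced path are again reduced, so the auxiliary sets $M$ satisfy the hypotheses of those results, is exactly the point the paper leaves implicit.
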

\begin{proof}
 Analogous to the proof of the previous corollary.
\end{proof}

\section{Varieties of paths contained in some hypersurface}
\label{sec:rank_varieties}

\begin{theorem}
 Let $x_1,\dots,x_n\in T^{\geq 1}(\mathbb{R}^d)$ and $k\leq n$. Then
 \begin{enumerate}
  \item The set of all paths $X$ such that
  \begin{equation*}
  \dim\{(\langle\sigma(Y),x_1\rangle,\dots,\langle\sigma(Y),x_n\rangle)|Y\text{ is a left subpath of the reduced path of }X\}\leq k
  \end{equation*}
  is a path variety.
  \item The set of all paths $X$ such that
  \begin{equation*}
  \dim\{(\langle\sigma(Y),x_1\rangle,\dots,\langle\sigma(Y),x_n\rangle)|Y\text{ is a right subpath of the reduced path of }X\}\leq k
  \end{equation*}
  is a path variety.
  \item The set of all paths $X$ such that
  \begin{equation*}
  \dim\{(\langle\sigma(Y),x_1\rangle,\dots,\langle\sigma(Y),x_n\rangle)|Y\text{ is a subpath of the reduced path of }X\}\leq k
  \end{equation*}
  is a path variety.
 \end{enumerate}

\end{theorem}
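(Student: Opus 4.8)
The idea is to rewrite ``$\dim\le k$'' as a bound on the rank of an (infinite) matrix whose entries are regular functions of $X$, and then to take the vanishing of the $(k+1)\times(k+1)$ minors as the defining iterated-integral equations. The point that makes this work is that, by the shuffle identity, such a minor — a polynomial in the pairings $\langle\sigma(X),\cdot\rangle$ — is again a single pairing $\langle\sigma(X),w\rangle$, so it cuts out a path variety.

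First I would reformulate the condition in terms of linear relations among the $x_i$. Write $\check X$ for the reduced path of $X$ and $J_{\check X}:=\{z\in T(\R^d)\mid\langle\sigma(\check X)_t,z\rangle=0\ \forall t\}=\mathcal I(\{\text{left subpaths of }\check X\})$. Since a left subpath $Y$ of $\check X$ is $\check X|_{[0,t]}$ with $\langle\sigma(Y),x_i\rangle=\langle\sigma(\check X)_t,x_i\rangle$, the set in (1) is exactly $\{(\langle\sigma(\check X)_t,x_1\rangle,\dots,\langle\sigma(\check X)_t,x_n\rangle)\mid t\}\subseteq\R^n$, and its linear span has dimension $n-\dim R_X$, where $R_X:=\{c\in\R^n\mid\sum_i c_i x_i\in J_{\check X}\}$ is the orthogonal complement of that span. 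Hence ``$\dim\le k$'' is the condition ``$\dim R_X\ge n-k$''. (For $x_i=\word i$ this already says that $\check X$ lies in a $\le k$-dimensional affine subspace, matching the advertised special case.)

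The key input is Step~2: by exactly the argument in the proof of Theorem~\ref{thm:halfshuffle_variety} — apply \cite{colmenarejopreiss20}[Theorem~8] to $B\colon\word 1\mapsto z,\ \word i\mapsto\word{i-1}$ and invoke Lemma~\ref{lem:x_1=0} — one has, for $z\in T^{\ge1}(\R^d)$, that $z\in J_{\check X}$ if and only if $\mathscr I_{\succ}(z)\subseteq\mathcal I(\{X\})$. Now $\mathscr I_{\succ}(z)$ is, by construction of the generated $\succ$-ideal, the span of all $E(z)$, where $E$ runs through the ``one-hole right-halfshuffle contexts'' (iterated right halfshuffles in which $z$ sits in one slot and the other slots are filled by fixed elements of $T^{\ge1}(\R^d)$), and each such $E$ is linear in its argument. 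Therefore $R_X=\{c\mid\sum_i c_i\langle\sigma(X),E(x_i)\rangle=0\ \forall E\}$, so $\dim R_X=n-\operatorname{rank}\big(\langle\sigma(X),E(x_i)\rangle\big)_{E,\,1\le i\le n}$, and ``$\dim\le k$'' becomes ``this rank is $\le k$'', i.e.\ ``every $(k+1)\times(k+1)$ minor vanishes''. Such a minor $\det\big(\langle\sigma(X),E_a(x_{i_b})\rangle\big)_{a,b=0}^{k}$, expanded by Leibniz and collapsed with $\prod_a\langle\sigma(X),u_a\rangle=\langle\sigma(X),u_0\shuffle\dots\shuffle u_k\rangle$, equals $\langle\sigma(X),W\rangle$ with $W:=\sum_{\pi\in S_{k+1}}\operatorname{sign}(\pi)\,\big(E_0(x_{i_{\pi(0)}})\shuffle\dots\shuffle E_k(x_{i_{\pi(k)}})\big)\in T^{\ge1}(\R^d)$. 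So the set in (1) equals $\mathcal V\big(\{W\}\big)$, the collection being taken over all $(k+1)$-subsets $\{i_0<\dots<i_k\}$ and all contexts $E_0,\dots,E_k$; this is a path variety. (Tree-like $X$, for which $\check X$ is constant and the set is $\{0\}$, are automatically included, since $W\in T^{\ge1}$.)

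For (2) I would transport along time reversal: a right subpath of $\check X$ becomes, after reversal, a left subpath of $\overleftarrow{\check X}$ (the reduced path of $\overleftarrow X$), and $\langle\sigma(\overleftarrow Y),x\rangle=\langle\sigma(Y),\antipode x\rangle$, so the set in (2) for $(x_i)_i$ is the image under the Zariski homeomorphism $X\mapsto\overleftarrow X$ of the set in (1) for $(\antipode x_i)_i$, hence a path variety (using $\overleftarrow{\mathcal V(I)}=\mathcal V(\antipode I)$ from the corollary following Theorem~\ref{thm:halfshuffle_variety}). For (3) the same scheme runs over subpaths $\check X|_{[s,t]}$ parametrized by $\{0\le s\le t\le T\}$: since every $\check X|_{[s,T]}$ is again reduced, one applies the Step~2 characterization on both ends — the relevant contexts now lying in $\mathscr I_{\succ,\prec}$ — again reducing ``$\dim\le k$'' to the vanishing of $(k+1)$-minors of a matrix of pairings $\langle\sigma(X),\cdot\rangle$. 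I expect the main work to be Step~2, i.e.\ checking that the family $\{E(x_i)\}$ of regular functions detects $J_{\check X}$ uniformly for every finite-$p$-variation $X$, together with the bookkeeping that $\mathscr I_{\succ}(z)$ really is spanned by linear-in-$z$ halfshuffle contexts, so that the rank formulation and all of its minors stay inside the ring of regular functions; the rest is routine linear algebra and the shuffle/antipode identities.
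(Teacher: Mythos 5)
Your proposal is correct and follows essentially the same route as the paper: convert the dimension bound into a rank bound for the matrix $\bigl(\langle\sigma(X),E(x_i)\rangle\bigr)$ where the linear-in-argument halfshuffle contexts $E$ (the paper's maps $b_l$) span $\mathscr{I}_{\succ}(\cdot)$, use Theorem \ref{thm:halfshuffle_variety} to identify vanishing along left subpaths with membership of the generated $\succ$-ideal in $\mathcal{I}(\{X\})$, and collapse the $(k+1)\times(k+1)$ minors into single signature pairings via the shuffle identity. The only cosmetic deviation is that you obtain (2) by transporting (1) through the antipode/time reversal instead of rerunning the argument with $\prec$-contexts, which is an equivalent shortcut.
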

\begin{proof}
Choose linear maps $(b_l)_{l=1}^\infty$ such that $(b_l(x))_l$ spans the $\succ$-ideal generated by $x$ for all
$x\in T^{\geq 1}(\mathbb{R}^d)$.

Assume WLOG that $x_1,\dots, x_n$ are linearly independent. Then $X$ satisfies the dimension condition from (1) if and only if there are $n-k$ independent linear combinations $y_1,\dots, y_{n-k}\in\spann\{x_1,\dots,x_n\}$ such that $\langle\sigma(Y),y_i\rangle=0$ for any $i=1,\dots, n-k$ and any left subpath $Y$ of the reduced path of $X$.
By , this holds if and only if $\langle\sigma(X),z\rangle=0$ for all $z$ in the $\succ$-path variety generated by the $y_i$. Thus these $y_i$ exist if and only if the dimension of the span of $\{(\langle\sigma(X),b_l(x_1)\rangle)_l,\dots,(\langle\sigma(X),b_l(x_1)\rangle)_l\}$ in the space of real sequences is lower or equal $k$. But this is the case if and only if all $(k+1)\times (k+1)$ minors of the infinite matrix $\langle\sigma(X),b_l(x_i)\rangle)_{il}$ are zero. But all these minors can be represented as $\sigma(X)$ values for shuffle polynomials in the $b_l(x_i)$. So the claim is proven for (1).

The proves for (2) and (3) are analogous, just choosing linear maps $(c_l)_{l=1}^\infty$, $(d_l)_{l=1}^\infty$ such that $(c_l(x))_l$, $(d_l(x))_l$ span the $\prec$-ideal generated by $x$ resp.\ the $\prec,\succ$-ideal generated by $x$ for all $x$.
\end{proof}

As a first application, let us look at paths contained in a sphere.
\begin{corollary}\label{cor:circlearcs}
 The set of paths in $\mathbb{R}^d$ with reduced path in some $d-1$-sphere with arbitrary radius or in a hyperplane is a path variety.
\end{corollary}
\begin{proof}
 Choose $n=d+1$, as $x_1=\word{1},\dots,x_d=\word{d},x_{d+1}=\word{11}+\dots+\word{dd}$ and $k=d$ in part (1) of the previous theorem.
\end{proof}

\begin{example}
 In $d=2$, the reduced paths that are either linear or lie on some circle can be characterized by the vanishing of the $3\times 3$ minors of the infinite matrix
 \begin{equation*}
  \begin{pmatrix}
   \word{1} & \word{11} & \word{11} & \word{12} & \word{21} & \word{111} & \dots\\
   \word{2} & \word{21} & \word{12} & \word{22} & \word{22} & \word{211} &\dots \\
   x & x\succ\word{1} & \word{1}\succ x & x\succ \word{2} & \word{2} \succ x & (x\succ \word 1)\succ \word 1 & \dots
  \end{pmatrix}
 \end{equation*}
 where $x:=\word{11}+\word{22}$,
 a lowest level example of which is
 \begin{align*}
  &\word{1}\shuffle \word{21}\shuffle (\word{1}\succ x)+\word{11}\shuffle \word{12} \shuffle x+\word{11}\shuffle \word{2}\shuffle (x\succ 1) - \word{11}\shuffle \word{21}\shuffle x-\word{11}\shuffle \word{2}\shuffle (1\succ x)-\word{1}\shuffle \word{12}\shuffle (x\succ \word{1})\\
  &=0.
\end{align*}
This vanishes because
\begin{equation*}
 \begin{pmatrix}
  \word{11}+\word{11} \\
  \word{21}+\word{12} \\
  x\succ \word{1}+\word{1}\succ x
 \end{pmatrix}
 =
 \begin{pmatrix}
  \word{1}\shuffle\word{1}\\
  \word{2}\shuffle\word{1}\\
  x\shuffle \word{1}
 \end{pmatrix},
\end{equation*}

thus the sum of the second and third columns in the above matrix is always a multiple of the first column.

So we reduce our attention to
 \begin{equation*}
  \begin{pmatrix}
   \word{1} & \word{11} & \word{12} & \word{111} & \dots\\
   \word{2} & \word{21} & \word{22}  & \word{211} &\dots \\
   x & x\succ\word{1}  & x\succ \word{2} & (x\succ \word 1)\succ \word 1 & \dots
  \end{pmatrix}
 \end{equation*}
 where the unique lowest level $3\times3$ minor is given by
 \begin{align*}
  &\word{1}\shuffle \word{21}\shuffle (x\succ \word{2})+\word{11}\shuffle \word{22} \shuffle x+\word{12}\shuffle \word{2}\shuffle (x\succ \word{1}) - \word{12}\shuffle \word{21}\shuffle x-\word{11}\shuffle \word{2}\shuffle (x\succ \word{2})-\word{1}\shuffle \word{22}\shuffle (x\succ \word{1})\\
  &=-2\cdot\word{111122}+2\cdot\word{111221}+\word{112112}+\word{112121}+2\cdot\word{112222}+\word{121112}-\word{121211}-2\cdot\word{122111}-\word{122122}\\
  &\quad\,-\word{122212}-\word{211121}-\word{211211}-2\cdot
  \word{211222}-\word{212122}+\word{212221}+2\cdot\word{221111}+\word{221212}+\word{221221}\\
  &\quad\,+2\cdot\word{222112}-2\cdot\word{222211}
\end{align*}
\end{example}

Still, we don't know whether given a radius $r$, the set of paths with reduced path in a $d-1$-sphere of radius $r$ forms a path variety.

We can now answer a question by Bernd Sturmfels and Carlos Améndola on how we can describe the set of paths lying in some algebraic surface of given degree by the signature.

\begin{corollary}
 The set of paths in $\mathbb{R}^d$ with reduced path in some algebraic hypersurface of degree lower or equal $m$ is a path variety.
\end{corollary}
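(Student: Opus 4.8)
The plan is to deduce this from part~(1) of the preceding theorem, after rephrasing ``the reduced path lies on an algebraic hypersurface of degree $\le m$'' as a bounded-rank condition on the family of stopped signatures. I may assume $m\ge 1$; for $m=0$ the relevant set is empty, hence trivially a path variety. The key tool is that for any path $Y:[0,T]\to\R^d$ the functional $u\mapsto\langle\sigma(Y),u\rangle$ is a character of the shuffle algebra, so precomposing with $\varphi$ gives the evaluation homomorphism $q\mapsto q(Y_T-Y_0)$ on $\R[x_1,\dots,x_d]$; in particular $\langle\sigma(Y),\varphi(q)\rangle=q(Y_T-Y_0)$ for every polynomial $q$, and $\varphi(q)\in T^{\ge 1}(\R^d)$ whenever $q(0)=0$. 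I would then fix once and for all the finite list $z_1,\dots,z_n$ consisting of the images $\varphi(x^\alpha)$ of all nonconstant monomials $x^\alpha$ of degree $\le m$, so $n=\binom{d+m}{d}-1\ge 1$; these lie in $T^{\ge 1}(\R^d)$ and are linearly independent because $\varphi$ is injective (the letters $\word1,\dots,\word d$ being algebraically independent in the shuffle algebra).

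The heart of the argument is the equivalence, for a reduced path $\check X$, of the statements: (i) $\check X$ lies in the zero locus of some nonzero polynomial of degree $\le m$; (ii) some nonzero polynomial $q$ with $q(0)=0$ and $\deg q\le m$ satisfies $q(\check X_t-\check X_0)=0$ for all $t$; (iii) some nonzero tuple $(c_1,\dots,c_n)$ satisfies $\sum_i c_i\langle\sigma(Y),z_i\rangle=0$ for every left subpath $Y$ of $\check X$; (iv) $\dim\{(\langle\sigma(Y),z_1\rangle,\dots,\langle\sigma(Y),z_n\rangle)\mid Y\text{ left subpath of }\check X\}\le n-1$. For (i)$\Rightarrow$(ii) I would set $q(y):=p(y+\check X_0)$, noting $q(0)=p(\check X_0)=0$ because $\check X_0$ itself lies on the hypersurface; for (ii)$\Rightarrow$(i) I would set $p(y):=q(y-\check X_0)$, using that translation preserves the degree and that $q(0)=0$ is in any case forced by taking $t=0$ in (ii). Then (ii)$\Leftrightarrow$(iii) follows from $q(\check X_t-\check X_0)=\langle\sigma(\check X|_{[0,t]}),\varphi(q)\rangle$ together with $\varphi(q)=\sum_i c_i z_i$ and the linear independence of the $z_i$ (so $q\ne 0$ exactly when the tuple is nonzero, and the left subpaths of $\check X$ are exactly the $\check X|_{[0,t]}$), while (iii)$\Leftrightarrow$(iv) is just the statement that a finite set of vectors fails to span iff some nonzero covector annihilates it.

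Finally, condition~(iv) is precisely the hypothesis of part~(1) of the preceding theorem applied to $z_1,\dots,z_n\in T^{\ge 1}(\R^d)$ with $k:=n-1\le n$, so the set of paths $X$ whose reduced path satisfies it --- equivalently, by the equivalence above, whose reduced path lies on some algebraic hypersurface of degree $\le m$ --- is a path variety, which is the assertion. I do not anticipate a real obstacle: the substance is already in the rank theorem and in the character identity $\langle\sigma(Y),\varphi(q)\rangle=q(Y_T-Y_0)$. The one point that needs genuine care is the base-point bookkeeping in (i)$\Leftrightarrow$(ii) --- namely that membership in a degree-$\le m$ hypersurface is invariant under translating $\check X$ to start at the origin --- since this is exactly what makes the condition expressible through the increments of left subpaths (equivalently, through the stopped signatures $\sigma(\check X)_t$), which is the only data that part~(1) sees.
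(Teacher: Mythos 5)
Your proposal is correct and takes essentially the same route as the paper: both apply part (1) of the preceding rank theorem to the shuffle images $\varphi(x^\alpha)$ of all nonconstant monomials of degree at most $m$ with $k=n-1$, your equivalences (i)--(iv) and the base-point translation merely spelling out what the paper leaves implicit. (Incidentally, your count $n=\binom{d+m}{d}-1$ is the correct one, whereas the paper's proof writes $n=md$.)
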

\begin{proof}
 Choose $n=md$, as $x_i$ the shuffle monomials in the letters up to degree $m$,  and $k=md-1$ in part (1) of the previous theorem.
\end{proof}

This of course also applies to degree $1$ hypersurfaces, the hyperplanes.
In fact, we get a general result for subspaces.

\begin{corollary}
 The set of paths in $\mathbb{R}^d$ with reduced paths in some $m$-dimensional subspace is a path variety.
\end{corollary}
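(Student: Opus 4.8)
The plan is to obtain this as an immediate application of part (1) of the preceding theorem, taken with $n=d$, with $x_i=\word{i}$ for $i=1,\dots,d$, and with $k=m$. If $m\ge d$ there is nothing to prove: every path already lies in the $m$-dimensional subspace $\R^d$, and $\varltwo{\R^d}=\mathcal{V}(\emptyset)$. So assume $m<d$, so that $k=m\le n$ as required by that theorem.

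First I would unwind what the rank condition says for this choice of data. A left subpath of the reduced path $\check{X}$ of $X$ is $\check{X}\restriction_{[0,t]}$, and since $\langle\sigma(\check{X})_t,\word{i}\rangle=\check{X}^{[i]}_t-\check{X}^{[i]}_0$ we have
\begin{equation*}
 \big(\langle\sigma(\check{X})_t,\word{1}\rangle,\dots,\langle\sigma(\check{X})_t,\word{d}\rangle\big)=\check{X}_t-\check{X}_0 .
\end{equation*}
Hence the set appearing in part (1) of the theorem is exactly $\{\check{X}_t-\check{X}_0\}$ with $t$ ranging over the time domain of $\check{X}$, i.e.\ the image of the reduced path translated to start at the origin, and the condition $\dim\le k$ says that the linear span of this set has dimension at most $m$.

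To finish, I would observe that this span has dimension $\le m$ precisely when $\check{X}-\check{X}_0$ takes values in some $m$-dimensional linear subspace of $\R^d$: if the span is strictly smaller one enlarges it to dimension $m$, which is possible because $m<d$. Since the signature, and hence membership in any path variety, is invariant under translating the start point, this is exactly the assertion that the reduced path of $X$ lies in some $m$-dimensional subspace, read in the same sense as in Lemma \ref{lem:x_1=0}. Thus the set in the corollary coincides with the path variety provided by part (1) of the theorem; one could equally invoke part (3), since for a linear subspace containing every left subpath of $\check{X}$ is the same as containing every subpath. There is no genuine obstacle here beyond this bookkeeping: the one point needing care, and the reason the statement is phrased with \emph{some} rather than a fixed subspace, is the passage from the rank being some $m'\le m$ to containment in an $m$-dimensional subspace, together with the harmless translation of the start point to $0$.
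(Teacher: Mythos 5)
Your proposal is correct and follows exactly the paper's route: the paper's proof is precisely the choice $n=d$, $x_i=\word{i}$, $k=m$ in part (1) of the preceding theorem, and your additional unwinding of the rank condition (the span of $\check{X}_t-\check{X}_0$ having dimension at most $m$, enlarged if necessary to an $m$-dimensional subspace, with the harmless translation of the start point) is just a careful spelling-out of why that specialization gives the stated set.
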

\begin{proof}
 Just choose $n=d$, $x_i=\word{i}$ and $k=m$ in part (1) of the  previous theorem.
\end{proof}

\begin{example}
 \begin{equation*}
  \begin{pmatrix}
   \word{1} & \word{11} & \word{12} & \word{13} & \word{14} & \word{111} & \dots \\
   \word{2} & \word{21} & \word{22} & \word{23} & \word{24} & \word{211} & \dots \\
   \word{3} & \word{31} & \word{32} &\word{33} &\word{34} &\word{311} & \dots \\
   \word{4} &\word{41} & \word{42} & \word{43} &\word{44} &\word{411} & \dots
  \end{pmatrix}
 \end{equation*}
 Then we obtain a characterization of linear paths through $\R^4$ by the $2\times 2$ minors of this infinite matrix.
 A lowest level example is
 \begin{equation*}
  \word{1}\shuffle\word{21}-\word{11}\shuffle\word{2}=\word{211}-\word{112}.
 \end{equation*}
 Similarly, the paths contained in a two-dimensional plane are characterized by the $3\times 3$ minors, e.g.\
 \begin{align*}
  &\word{1}\shuffle\word{21}\shuffle\word{32}+\word{11}\shuffle\word{22}\shuffle\word{3}+\word{12}\shuffle\word{2}\shuffle\word{31}-\word{12}\shuffle\word{21}\shuffle\word{3}-\word{11}\shuffle\word{2}\shuffle\word{32}-\word{1}\shuffle\word{22}\shuffle\word{31}\\
  &=\word{11223}-\word{11322}-\word{12213}+\word{12312}-\word{21123}+\word{21321}+\word{22113}-\word{22311}+\word{31122}\\
  &\quad\,-\word{31221}-\word{32112}+\word{32211}
 \end{align*}

\end{example}

\begin{proposition}\label{prop:suboflinear}
 Let $V$ be a sub path variety of the path variety of linear paths $V^{\text{linear}}$. Then $V=V^{\text{linear}}\cap\incin{M}$ for some point variety $M$.
\end{proposition}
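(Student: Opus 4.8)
The plan is to let $M$ be the set of increments of the straight-line paths contained in $V$, and to verify (a) that $M$ is an algebraic subset of $\R^d$, and (b) that $V=V^{\text{linear}}\cap\incin{M}$. For $v=(v^1,\dots,v^d)\in\R^d$ let $\ell_v$ denote the straight-line path with increment $v$, so that $\sigma(\ell_v)=\exp_{\conc}(v)$ and hence $\langle\sigma(\ell_v),\word{i}_1\cdots\word{i}_n\rangle=\tfrac1{n!}\,v^{i_1}\cdots v^{i_n}$. Set $M:=\{v\in\R^d\mid\ell_v\in V\}$. For a fixed $x\in T(\R^d)$ the function $v\mapsto\langle\exp_{\conc}(v),x\rangle$ is a polynomial $p_x\in\R[v^1,\dots,v^d]$, being a finite $\R$-linear combination of monomials, one for each word occurring in $x$. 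Since $\ell_v\in V$ is equivalent to $\langle\sigma(\ell_v),x\rangle=0$ for all $x\in\mathcal{I}(V)$, we obtain $M=\{v\in\R^d\mid p_x(v)=0\ \forall\,x\in\mathcal{I}(V)\}$, so $M$ is a point variety.

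I would then record that $V^{\text{linear}}$, as it is used throughout, consists exactly of the paths tree-like equivalent to a straight-line path. Indeed, the coordinates of the first kind of $\log_{\conc}\sigma(X)$ on levels $\geq 2$ are polynomial in the components of $\sigma(X)$ (only finitely many terms of $\log_{\conc}$ contribute at each fixed level), hence shuffle-linear in $\sigma(X)$ by the folklore recalled in Section~\ref{sec:topology}; their simultaneous vanishing is therefore a path variety, it contains every straight-line path, and by bijectivity of $\exp_{\conc}\colon\mathfrak{g}((\R^d))\to\mathcal{G}_d$ it equals $\{X\mid\sigma(X)\in\exp_{\conc}(\R^d)\}$, i.e.\ the paths tree-like equivalent to a straight-line path. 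In particular every $X\in V^{\text{linear}}$ is tree-like equivalent to $\ell_v$ for $v=(\langle\sigma(X),\word{1}\rangle,\dots,\langle\sigma(X),\word{d}\rangle)$ the increment of $X$, which is a tree-like invariant because it is read off $\sigma(X)$.

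The equality $V=V^{\text{linear}}\cap\incin{M}$ is then a short double inclusion, using only that a path variety contains all paths tree-like equivalent to any of its points (as noted for $\overline{\{X\}}$ in Section~\ref{sec:topology}). If $X\in V$, then $X\in V^{\text{linear}}$ by hypothesis, so $X$ is tree-like equivalent to $\ell_v$ with $v$ its increment; hence $\ell_v\in V$, i.e.\ $v\in M$, so $X\in\incin{M}$. Conversely, if $X\in V^{\text{linear}}\cap\incin{M}$, then $X$ is tree-like equivalent to $\ell_v$ where $v$ is its increment, and $v\in M$ gives $\ell_v\in V$, whence $X\in V$. The only step that is not purely formal is the identification of $V^{\text{linear}}$ in the second paragraph; once that is in place, the same pair of maps $F(v)=\ell_v$, $G(X)=$ the increment of $X$ also meets the hypotheses of Lemma~\ref{lem:biregular}, so one additionally obtains that the reduced-path variety of $V$ is biregular to $M$, which is how $V^{\text{linear}}\cap\incin{M}$ gets used in Section~\ref{sec:dim}.
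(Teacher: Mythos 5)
Your proof is correct and takes essentially the same route as the paper's: both rest on the observation that for a linear path with increment $v$ one has $\sigma(\ell_v)=\exp_{\conc}(v)$, so $v\mapsto\langle\sigma(\ell_v),x\rangle$ is a polynomial $p_x$, and $M$ is then cut out by the $p_x$ for $x\in\mathcal{I}(V)$. Your second paragraph, which verifies that $V^{\text{linear}}$ is a path variety equal to the set of paths tree-like equivalent to a straight line, supplies a justification the paper leaves implicit, but it does not change the underlying argument.
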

\begin{proof}
 Since for a linear path $X$ we have $\sigma(X)=\exp_{\conc}(\sum_{\word{i}=\word1}^{\word d}\langle\sigma(X),\word{i}\rangle\word{i})$, we can express any component $\langle\sigma(\cdot),x\rangle$ of the signature as a polynomial $p_x$ in the increment.
 Thus if $V$ is a sub path variety of $V^{\text{linear}}$,
 $X$ is in $V$ if and only if it is in $V^{\text{linear}}$ and $p_x(X(T)-X(0))=0$ for all $x\in\mathcal{I}(V)$.
 Thus $V=V^{\text{linear}}\cap\incin{M}$ with $M=\{v\in\mathbb{R}^d|p_x(v)=0\,\forall x\in\mathcal{I}(V)\}$.
\end{proof}

\begin{remark}
 This implies that $\bigcup_{n\in\mathbb{N}}\overline{\{X_n\}}$ for $(X_n)_{n\in\mathbb{N}}$ a family of distinct linear paths is not a sub path variety of $\varltwo{\R^d}$.
\end{remark}

\section{Connection with Varieties of Signature Tensors}\label{sec:homogeneous}

In \cite{AFS18},
the authors use a different approach.
They look at examples of linear path spaces $\mathcal{X}$ such that for any fixed level $k$,
the signature map $\sigma^{k}:\,\mathcal{X}\to (\R^d)^{\otimes k}$ will be a polynomial map.
The examples considered are
\begin{itemize}
 \item polynomial paths up to a certain degree,
 \item piecewise linear paths up to a certain number of segments,
 \item axis-parallel paths of a particular shape.
\end{itemize}

Thus, the image $\sigma^{(k)}(\mathcal{X})$ is automatically a semialgebraic subset of the $k$-level tensors $\sigma^{k}:\,\mathcal{X}\to (\R^d)^{\otimes k}$.

\begin{proposition}
Whenever $I$ is homogeneous, $\mathcal V(I)$ is stable under rescaling.

Whenever $M$ is stable under rescaling, $\mathcal I(M)$ is homogeneous.
\end{proposition}

\begin{proof}
If $I$ is homogeneous, then $\langle\sigma(\lambda X),x\rangle=\lambda^{|x|}\langle\sigma(X),x\rangle$ for all homogeneous $x\in I$, ao $\mathcal V(I)$ is stable under rescaling.

If $M$ is stable under rescaling and $x\in\mathcal I(M)$, then for all $X\in M$ we have $f(\lambda)=\langle\sigma(\lambda X),x\rangle=0$ for all $\lambda$, so also the derivatives $f^{(n)}(0)$ are zero, which are multiples of $\langle\sigma(\lambda X),x_n\rangle$ for $x_n:=\proj_n x$ the level $n$ part of $x$, so all $x_n$ are elements of $\mathcal I(M)$. So the latter is homogeneous.
\end{proof}

Whenever $x\in\mathcal{I}(\mathcal{X})$ is homogeneous,
it will always appear as a linear form in $J_{|x|}$.  Additionally, it appears as a degree $m$ polynomial in $J_k$ if and only if $|x|=km$ and $x$ can be expressed as a linear combination of shuffles of elements of $T^k(\R^d)$.

Conversely, any homogeneous polynomial $p\in J_k$ of degree $m$ via turning products into shuffles becomes a level $km$ element of $\mathcal{I}(\mathcal{X})$.

So the two approaches are a priori equivalent.
However, our new approach reveals the halfshuffle structure. Indeed, in all three cases of polynomial paths, piewewise linear paths and axis-parallel paths, the class of paths considered is stable under restriction, so contains both left and right subpaths.
Thus, the ideals $\mathcal{I}(\mathcal{X})$ are $\prec,\succ$-ideals.
This should lead to a much better understanding of varieties of signature tensors.

Furthermore, the present article suggests the study of varieties of signature tensors for further classes of paths, for example circle arcs in $\R^2$ through Corollary \ref{cor:circlearcs}.

\section{Path varieties stable under concatenation}
\label{sec:subsemigroups}

In this section, we explore how being a path variety stable under concatenation is actually a very strong requirement implying stability under time-reversal and inclusion of what we call admissible roots,
and give several algebraic characterizations.
In essence, Theorem \ref{thm:concpowers} and Theorem \ref{thm:subHopfalg} reduce everything to the well-known duality between subalgebras (here of $T((\R^d))$) and  coideals (here of $T(\R^d)$),
but we'll go through that in detail.

We call a linear subspace $C\subseteq T(\R^d)$ a coideal if
\begin{equation*}
 \Delta_{\conc} C\subseteq C\otimes T(\R^d)+T(\R^d)\otimes C.
\end{equation*}
Note that in contrast to ideals also being subalgebras, coideals are not in general subcoalgebras,
it is the other way around,
subcoalgebras are always coideals.

\begin{proposition}
 If $C\subseteq T(\R^d)$ is a coideal,
 then $\mathcal{V}(C)$ is stable under concatenation.
\end{proposition}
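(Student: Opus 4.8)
The plan is to reduce the statement to Chen's identity combined with the defining property of a coideal, so that no genuinely new idea is required. Recall that $C$ being a coideal of the deconcatenation coalgebra $(T(\R^d),\Delta_{\conc})$ means $\Delta_{\conc}C\subseteq C\otimes T(\R^d)+T(\R^d)\otimes C$; the counit condition $\langle\emptyword,x\rangle=0$ for $x\in C$ will not even be needed. I would fix $X,Y\in\mathcal{V}(C)$ and an arbitrary $x\in C$, and compute, exactly as in the proof of Theorem~\ref{thm:path_conc_variety},
\[
\langle\sigma(X\sqcup Y),x\rangle=\langle\sigma(X)\conc\sigma(Y),x\rangle=\langle\sigma(X)\otimes\sigma(Y),\Delta_{\conc}x\rangle .
\]

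Next I would invoke the coideal property to write $\Delta_{\conc}x$ as a finite sum $\sum_i a_i\otimes b_i+\sum_j c_j\otimes d_j$ with all left factors $a_i\in C$ in the first group and all right factors $d_j\in C$ in the second; this is possible because $C\otimes T(\R^d)$, resp.\ $T(\R^d)\otimes C$, is spanned by elementary tensors whose left, resp.\ right, factor lies in $C$. Pairing against $\sigma(X)\otimes\sigma(Y)$ and using bilinearity of the pairing yields
\[
\langle\sigma(X\sqcup Y),x\rangle=\sum_i\langle\sigma(X),a_i\rangle\langle\sigma(Y),b_i\rangle+\sum_j\langle\sigma(X),c_j\rangle\langle\sigma(Y),d_j\rangle .
\]
Every term of the first sum vanishes since $X\in\mathcal{V}(C)$ and $a_i\in C$, and every term of the second sum vanishes since $Y\in\mathcal{V}(C)$ and $d_j\in C$. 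Hence $\langle\sigma(X\sqcup Y),x\rangle=0$ for all $x\in C$, i.e.\ $X\sqcup Y\in\mathcal{V}(C)$, which is precisely stability of $\mathcal{V}(C)$ under $\sqcup$.

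There is no real obstacle here; the one point that deserves a word of care is that the splitting of $\Delta_{\conc}x$ into a part in $C\otimes T(\R^d)$ and a part in $T(\R^d)\otimes C$ is not canonical. This is harmless: the value of the fixed linear functional $\langle\sigma(X)\otimes\sigma(Y),\cdot\rangle$ on $T(\R^d)\otimes T(\R^d)$ is independent of the chosen representative, and for any representative of an element of $C\otimes T(\R^d)$ (resp.\ $T(\R^d)\otimes C$) we may take all left (resp.\ right) tensor factors in $C$, so the vanishing argument above applies unambiguously. (If one does not assume $\langle\emptyword,x\rangle=0$ on $C$ and it happens that $\emptyword\in C$, then $\mathcal{V}(C)=\emptyset$ and the claim is vacuous, so the conclusion holds in all cases.)
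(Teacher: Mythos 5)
Your proposal is correct and follows essentially the same route as the paper: Chen's identity to rewrite $\langle\sigma(X\sqcup Y),x\rangle$ as $\langle\sigma(X)\otimes\sigma(Y),\Delta_{\conc}x\rangle$, then the coideal inclusion $\Delta_{\conc}C\subseteq C\otimes T(\R^d)+T(\R^d)\otimes C$ together with $\sigma(X),\sigma(Y)\in C^{\perp}$ to conclude vanishing. Your explicit decomposition into elementary tensors merely spells out the step the paper states directly, so there is nothing to add.
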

\begin{proof}
 Let $X,Y\in\mathcal{V}(C)$ and $x\in C$.
 Then, using Chen's identity,
 \begin{equation*}
  \langle\sigma(X\sqcup Y),x\rangle
  =\langle\sigma(X)\otimes\sigma(Y),\Delta_{\conc} x\rangle
  =0,
 \end{equation*}
 since $\Delta_{\conc} x\in C\otimes T(\R^d)+T(\R^d)\otimes C$
 and both $\sigma(X)$ and $\sigma(Y)$ lie in $C^\perp:=\{x\in T((\R^d))|\langle x,y\rangle=0\,\forall y\in C\}$ by assumption.
\end{proof}

Note that there is in general no such thing as a coideal generated by a family of elements of $T(\R^d)$,
as the intersection of coideals does not need to be a coideal again.
However, a coideal $C$ can be minimal with respect to a set $W\subset T(\R^d)$
if it contains $W$ and there is no strict subcoideal of $C$ which still contains $W$.

For example, both $\spann\{\word{1},\word{12}-\word{21}\}$ and $\spann\{\word{2},\word{12}-\word{21}\}$ are minimal coideals with respect to the signed area $\word{12}-\word{21}$,
so there is no such thing as a coideal generated by $\word{12}-\word{21}$.

Since we will furthermore later learn in this section that the ideal corresponding to a concatenation stable path variety is always a coideal,
this means that if we want to look at a maximal path variety for which the signed area vanishes and which is stable under concatenation,
we have multiple ways to choose from,
two of which are additionally requiring the $\word{1}$ increment to be zero or the requiring the $\word{2}$ increment to be zero.
Indeed, it is geometrically easy to see that the signed area is additive under concatenation if we fix any axis for the increment.

Of course, we can ask this question for any path variety $V$:
What are the maximal subvarieties of $V$ stable under concatenation?
And this is, as we will see,
equivalent to asking for the minimal coideals containing $\mathcal{I}(V)$.

\begin{figure}
\includegraphics[width=0.95\textwidth]{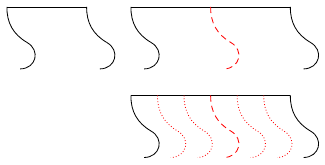}
\caption{Top left: an infinitely divisible path $X$, top right: $X^{\sqcup 2}$, bottom right: $X^{\sqcup 2}$ subdivided into $X_3^{\sqcup 6}$.}
\end{figure}

We call a path $X\in\varltwo{\R^d}$ infintely divisible if for any $n\in\mathbb{N}$,
there is $X_n\in\varltwo{\R^d}$ with
$X_n^{\sqcup n}\treelike X$.
The infinitely divisible paths are in fact exactly the conjugations $\overleftarrow{Y}\sqcup A\sqcup Y$
of linear paths $A$ with arbitrary paths $Y$.

We say that a path $X$ has an admissible power (or root) $r\in\mathbb{R}$ if there is a path $Y$ such that $\sigma(Y)=\exp_{\conc}(r\log_{\conc}(\sigma(X)))$.
\begin{theorem}\label{thm:infdivi}
 If a path $X$ is infinitely divisible, then there is a linear path $A$ such that $X\treelike \overleftarrow{Y}\sqcup A\sqcup Y$
\end{theorem}
\begin{proof}
 If $X\treelike\overleftarrow{Y}\sqcup A\sqcup Y$ for some linear path $A$,
 then $(\overleftarrow{Y}\sqcup \tfrac{1}{n}A\sqcup Y)^{\sqcup n}\treelike X$
 as $(\tfrac{1}{n}A)^{\sqcup n}\cong A$
 for a linear path $A$.

 Let now $X:\,[0,T]\to\R^d$ be an arbitrary infinitely divisible path with $X_n^{\sqcup n}\treelike X=:X_1$,
 where we may assume that the $X_n:\,[0,T_n]\to\R^d$ are reduced.
 Take the tree-like part of $X_n^{\sqcup 2}$,
 and separate it at the concatenation point $X_n(T_n)$,
 to identify it as $Z_n\sqcup\overleftarrow{Z_n}$, where then $Z_n$ is reduced again as a subpath of $X_n$.
 Then, there are paths $A_n$ and $B_n$ such that $X_n=A_n\sqcup Z_n=\overleftarrow{Z_n}\sqcup B_n$.

 If we assume that the starting point of $B_n$ comes in time order strictly later than the end point of $A_n$,
 then there is a non-constant reduced path $C_n$ such that $X_n=A_n\sqcup C_n\sqcup B_n$, $Z_n=C_n\sqcup B_n$ and $\overleftarrow{Z_n}=A_n \sqcup C_n$.
 But this implies $\overleftarrow{C_n}$ to be a subpath of $C_n$, or $C_n$ to be a subpath of $\overleftarrow{C_n}$.
 Either of these implies the other,
 and then that $C_n$ is a strict subpath of itself, which is a contradiction to Chen's identity,
 or that $C_n=\overleftarrow{C_n}$,
 which means $C_n$ is constant, but we assumed it to be non-constant.

 Thus,
 the starting point of $B_n$ comes in time order earlier than the end point of $A_n$, or both are equal in time,
 so we have that
 $X_n=\overleftarrow{Z_n}\sqcup X'_n\sqcup Z_n$
 for some reduced path $X'_n$.
 Since $(X'_n)^{\sqcup 2}$ and $X'_n\sqcup Z_n$ are reduced
 (because we already removed the full tree-like part $Z_n\sqcup \overleftarrow{Z_n}$ from $X_n^{\sqcup 2}$),
 we have that $\overleftarrow{Z_n}\sqcup (X'_n)^{\sqcup k}\sqcup {Z_n}$ for arbitrary $k$ is reduced.
 The identity
 \begin{equation*}
  \overleftarrow{Z_n}\sqcup (X'_n)^{\sqcup n}\sqcup {Z_n}\treelike (\overleftarrow{Z_{n}}\sqcup X'_{n}\sqcup Z_{n})^{\sqcup n}\treelike \overleftarrow{Z_1}\sqcup X_1'\sqcup Z_1
 \end{equation*}
 then reveals that $\overleftarrow{Z_n}\sqcup (X'_n)^{\sqcup n}\sqcup {Z_n}\cong \overleftarrow{Z_1}\sqcup X_1'\sqcup Z_1$, as they are both reduced.
 Furthermore, once again since $(X'_n)^{\sqcup 2}$ and $(X'_1)^{\sqcup 2}$ are both reduced,
 we have that both $Z_n\sqcup\overleftarrow{Z_n}$
 and $Z_1\sqcup\overleftarrow{Z_1}$
 are up to reparametrization and translation  the unique maximal tree-like excursion of
 \begin{equation*}
  \overleftarrow{Z_n}\sqcup (X'_n)^{\sqcup n}\sqcup {Z_n}\sqcup \overleftarrow{Z_n}\sqcup (X'_n)^{\sqcup n}\sqcup {Z_n}\cong \overleftarrow{Z_1}\sqcup X'_1\sqcup {Z_1}\sqcup \overleftarrow{Z_1}\sqcup X'_1\sqcup {Z_1}
 \end{equation*}
 So $Z_n\cong Z_1$ and finally $(X'_n)^{\sqcup n}\cong X'_1$

 Now $X':\,[0,T']\to\R^d$ must be linear.
 Otherwise, assume WLOG $X'_n(0)=X'(0)=0$, and take a point $p$ not on the line segment $[0,X'(T')]$. Then each $X'_n$ would have to contain a path from some point on $[0,X_n'(T'_n)]$ to some point on the parallel line containing $p$.
 So $X'$ must travel between these non-indentical parallel lines infinitely often.
 This is a contradiction to $X'$ being continuous.
\end{proof}

In general,
we say say that $r\in\R$ is an admissable power of $X$
if there is $Y\in\varltwo{\R^d}$ with $\sigma(Y)=\exp_{\conc}(r\log_{\conc}\sigma(X))$.

\begin{lemma}[Commutation Lemma]\label{lem:commutation}
 If $X\sqcup Y\treelike Y\sqcup X$,
 then both $\check{X}$ and $\check{Y}$ are conjugations of parallel linear paths,
 or $\check{X}\cong\overleftarrow{Z}\sqcup C^{\sqcup k} \sqcup Z$ and $\check{Y}\cong\overleftarrow {Z}\sqcup C^{\sqcup m}\sqcup Z$
 for paths $Z,C$ and $k,m\in\mathbb{Z}$.
\end{lemma}
\begin{proof}
 Assume WLOG that $X$ and $Y$ are reduced,
 and let $Z$ be the right subpath inclusion largest path
 such that
 $\check{X}\cong\overleftarrow{Z}\sqcup X'\sqcup Z$ and $\check{Y}\cong\overleftarrow {Z}\sqcup Y'\sqcup Z$.
 Then $X'$ and $Y'$ still commute, as
 \begin{equation*}
  X'\sqcup Y'\treelike Z\sqcup X\sqcup Y\sqcup \overleftarrow Z\treelike Z\sqcup Y\sqcup X
  \sqcup \overleftarrow Z\treelike Y'\sqcup X'.
 \end{equation*}

 Assume $X'$ and $Y'$ have a one-sided overlap,
 i.e.\ WLOG $X'\cong L\sqcup B$, $Y'\cong\overleftarrow{B}\sqcup R$
 such that $L\sqcup R$ and $R\sqcup L$ are reduced.
 Then
 \begin{equation*}
  \overleftarrow{B}\sqcup R\sqcup L\sqcup B = Y'\sqcup X'\treelike X'\sqcup Y'\treelike L\sqcup R,
 \end{equation*}
 so $L\sqcup R$ has an overlap with itself,
which is a contradiction to $L \sqcup R$ and $R\sqcup L$ being reduced.

Assume $X'$ and $Y'$ have a two-sided overlap, but neither $\overleftarrow{X'}$ is a subpath of $Y'$ nor $\overleftarrow{Y'}$ is a subpath of $X'$.
Then $X'\cong L_1\sqcup B\cong\overleftarrow{W}\sqcup R_1$
and $Y'\cong L_2\sqcup W\cong \overleftarrow{B}\sqcup R_2$,
and
\begin{equation*}
 \overleftarrow B\sqcup R_2\sqcup L_1\sqcup B\treelike \overleftarrow W\sqcup R_1\sqcup L_2\sqcup W.
\end{equation*}
Due to the assumption, the conjugations with $B$ and $W$ cannot be fully absorbed when reducing the paths,
and so there is a common conjugation by a common right subpath of $B$ and $W$,
which is a contradiction to the assumption that the maximal common conjugation has been removed through $Z$.

Assume $X'$ and $Y'$ have a two-sided overlap, and $\overleftarrow{X'}$ is a subpath of $Y'$ or $\overleftarrow{Y'}$ is a subpath of $X'$.
We consider WLOG the case that $\overleftarrow{X'}$ is a subpath of $Y'$.
Then, we consider instead the potential overlaps of the commuting paths $\overleftarrow{X'}$ and $Y'$.
By going through the prior two cases in that constellation again,
it can be excluded that they have a one-sided overlap,
and if they have a two-sided overlap,
then $X'$ must be a subpath of $Y'$ or $Y'$ is a subpath of $X'$.
The latter can be excluded, as it would lead to the contradiction that $\overleftarrow{X'}$ is a subpath of $X'$.
So $\overleftarrow{X'}$ is a left subpath of $Y'$ and $X'$ is a right subpath of $Y'$,
and thus $Y'\cong\overleftarrow{X'}\sqcup A_1 \sqcup X$ for some $A_1$.
But then $A_1 X\cong\overleftarrow{X'}\sqcup A_1\sqcup X'\sqcup X'$,
so $\overleftarrow{X'}\sqcup A_1$ is a strict subpath of $A_1\sqcup X'$, so $\overleftarrow{X'}$ is a strict subpath of $A_1\cong \overleftarrow{X'}\sqcup {A_2}$,
so $\overleftarrow{X'}^{\sqcup 2} \sqcup A_2$ is reduced and a strict subpath of $\overleftarrow{X'}\sqcup A_2\sqcup X'$,
and we can continue this arbitrarily far to reveal that $\overleftarrow{X'}$ is infinitely often contained in $A_1\sqcup X'$,
which is a contradiction.

So, after possibly performing a time reversal on one of the paths, WLOG, $X'\sqcup Y'\cong Y'\sqcup X'$ is reduced.
Then $X'$ is a subpath of $Y'$, or $Y'$ is a subpath of $X'$.
Assume, again WLOG, that $X'$ is a subpath of $Y'$.
Then let $a\in\mathbb{N}$ be the maximum number such that $(X')^{\sqcup a}$ is a left subpath of $Y'$,
and let $X'_2$ denote the remainder, $Y'\cong(X')^{\sqcup k}\sqcup X'_2$.
Then $X'_2$ is a right subpath of $X'$ that commutes with $X'$ and does not overlap with it,
so it is also again a left subpath $X'$.
Continuing this procedure, we construct a chain of left subpaths $X'_n,\dots,X'_2,X'$ of each other and of $Y'$,
such that always $X'_b=(X'_{b+1})^{\sqcup a_b}\sqcup X'_{b+2}$.
If this ever terminates without a further remainder, then $X'=(X'_n)^{\sqcup k}$, $Y'=(X'_n)^{\sqcup m}$ for some $k,m\in\mathbb{N}$ and $X'_n$ the leftmost remainder in the chain.
If it doesn't terminate, we get an infinite chain of remainders which are contained to strictly increasing powers in $Y'$,
and in those powers cover a strictly increasing left subpath of $Y'$.
By the same argumentation as at the end of the proof before,
some left subpath of $Y'$ containing $X'$ must be linear.
But then since $X'$ and $Y'$ commute,
we get that $Y'$ is linear too,
and parallel to $X'$.

\end{proof}

\begin{theorem}\label{thm:irrationalroot}
 If a path $X$ has an irrational root, then it is infinitely divisible.
\end{theorem}
\begin{proof}
 By the commutation lemma, $X$ and its irrational root $Y$,
 which commute due to $\exp_{\conc}(v)\conc\exp_{\conc}(rv)=\exp_{\conc}(rv)\conc\exp_{\conc}(v)$ for all Lie series $v$,
 are either of the form $\check{X}\cong\overleftarrow{Z}\sqcup C^{\sqcup k} \sqcup Z$ and $\check{Y}\cong\overleftarrow {Z}\sqcup C^{\sqcup m}\sqcup Z$,
 which is a contradiction to $Y$ being an irrational root,
 or $X$ and $Y$ are both conjugations of linear paths,
 and thus infinitely divisible.
\end{proof}

\begin{theorem}\label{thm:concpowers}
 Let $K\subseteq\mathbb{Z}$ be infinite. Then, if the path $\check{X}$ is not infinitely divisible, we have
 \begin{equation*}
  \overline{\{X^{\conc n}|n\in K\}}=\{Z\treelike Y^{\conc n}|n\in\mathbb{Z}\},
 \end{equation*}
 where $Y$ is the shortest root of $\check{X}$.

 If $\check{X}$ is infinitely divisible, we have
 \begin{equation*}
  \overline{\{X^{\conc n}|n\in K\}}=\{Z\treelike \check{X}^{\conc r}|r\in\mathbb{R}\}.
 \end{equation*}
\end{theorem}
\begin{proof}
 First of all, the right hand side expressions do form path varieties,
 as by the previous theorems they do contain all paths with signatures of the form $\exp_{\conc}(rv)$ for arbitrary $r\in\mathbb{R}$ where $v$ is the log-signature of $X$.
 Indeed, these are given by all the paths $Y$ satisfying the equations
 \begin{equation*}
  \langle\sigma(Y),c_v\rangle\langle\sigma(X),c_w\rangle=\langle\sigma(X),c_v\rangle\langle\sigma(Y),c_w\rangle,
 \end{equation*}
 for arbitrary coordinates of the first kind $c_v,c_w$,
 so the path varieties are those corresponding to the shuffle ideal generated by the elements $\langle\sigma(X),c_w\rangle c_v-\langle\sigma(X),c_v\rangle c_w$.

 Let $N$ be the smallest number in $\mathbb{N}$ such that $\proj_N\log_{\conc}\sigma(X)\neq 0$ and $x\in\proj_N T(\R^d)$ such that $\langle\sigma(X),x\rangle\neq 0$ (or take $x$ to be a coordinate of the first kind that doesn't vanish). Then $r=\frac{\langle\sigma(\check{X}^{\conc r}),x\rangle}{\langle\sigma(X),x\rangle}$ for all values of $r$ such that $\check{X}^{\conc r}$ exists as a path. Thus $\{\sigma(\check{X}^{\conc r})|r\in\R\text{ admissable}\}$ is polynomially parametrized by $\langle\sigma(\check{X}^{\conc r}),x\rangle$ and so there can't be a proper subset that is infinite such that the corresponding paths form a path variety.
\end{proof}

\begin{figure}
 \includegraphics[width=0.95\textwidth]{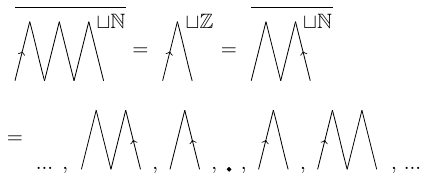}
 \caption{Example of the variety of admissable powers of a non infinite divisible path.
 The dot in the middle symbolizes the constant path, the zeroth power.
 Note that for simplicity, this graphic describes the situation in the reduced path group $\varltwogroup{\R^d}$, where we don't have tree-like excursions. }
\end{figure}

\begin{example}
 For $d=2$, the first coordinates of the first kind are
 $c_{\word{1}}=\word{1}$, $c_{\word{2}}=\word{2}$ and $c_{\word{12}}=\tfrac{1}{2}(\word{12}-\word{21})$.
 For our path with signature $\exp_{\conc}(\word{1}+4\cdot\word{2})\conc\exp_{\conc}(\word{1}-4\cdot\word{2})$,
 we have $\langle\sigma(X),\word{1}\rangle=2$, $\langle\sigma(X),\word{1}\rangle=0$ and $\langle\sigma(X),\tfrac{1}{2}(\word{12}-\word{21})\rangle=-2$,
 where the latter is the signed area.
 Thus, the first relations are given by $\word{2}$ and $\word{12}-\word{21}+2\cdot\word{1}$.
\end{example}

An immediate consequence is the following.

\begin{corollary}\label{cor:algsubsemigroups}
 Any path variety that is stable under concatenation is stable under time inverse and taking admissable roots.
\end{corollary}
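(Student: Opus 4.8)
The plan is to deduce Corollary \ref{cor:algsubsemigroups} directly from Theorem \ref{thm:concpowers}, which is the substantive content. Suppose $V$ is a path variety stable under concatenation, and let $X \in V$. Since $V$ is a path variety, it contains all paths tree-like equivalent to $X$, in particular the reduced path $\check X$, so without loss of generality we may work with $\check X$ and then transfer back by tree-like equivalence. Because $V$ is closed under $\sqcup$, it contains $X^{\conc n}$ for every $n \in \mathbb N$, hence the whole set $\{X^{\conc n} \mid n \in \mathbb N\}$, and since $V$ is Zariski closed it contains $\overline{\{X^{\conc n} \mid n \in \mathbb N\}}$.

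Now I would invoke Theorem \ref{thm:concpowers} with the infinite set $K = \mathbb N \subseteq \mathbb Z$. In the non-infinitely-divisible case this gives $\overline{\{X^{\conc n}\mid n\in \mathbb N\}} = \{Y^{\conc n}\mid n\in\mathbb Z\}$ for $Y$ the shortest root of $\check X$; in particular this set contains $Y^{\conc 0}$ (the constant path, so $V$ contains a constant path — consistent with $V$ being a closed semigroup), $Y^{\conc(-1)} = \overleftarrow{Y}$, and more generally $Y^{\conc n}$ for negative $n$, which realizes all integer admissable powers, and in particular $\overleftarrow{X} = \overleftarrow{\check X}$ up to tree-like equivalence appears as $Y^{\conc(-N)}$ where $\check X = Y^{\conc N}$. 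In the infinitely-divisible case, $\overline{\{X^{\conc n}\mid n\in\mathbb N\}} = \{\check X^{\conc r}\mid r\in\mathbb R\}$, which visibly contains $\check X^{\conc(-1)} = \overleftarrow{\check X}$ and all admissable roots $\check X^{\conc(1/m)}$. Either way, $\overleftarrow{X}$ and every admissable root of $X$ lie in $\overline{\{X^{\conc n}\mid n\in\mathbb N\}} \subseteq V$.

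A small amount of care is needed on two bookkeeping points. First, the statement of Theorem \ref{thm:concpowers} is phrased for $\check X$ and its shortest root; I should make explicit that "admissable root'' of $X$ means a path $Y$ with $\sigma(Y) = \exp_{\conc}(\tfrac1m \log_{\conc}\sigma(X))$, and that such $Y$ — when it exists — has signature equal to that of $\check X^{\conc(1/m)}$, so it lies in $V$ precisely because $V$ is determined by signatures (it is a path variety and hence tree-like saturated). Second, "stable under time inverse'' should be read as: if $X \in V$ then $\overleftarrow{X} \in V$; this is exactly the $n=-1$ (resp. $r=-1$) instance above. The passage from $\mathbb N$ to all of $\mathbb Z$ or $\mathbb R$ is what makes the time inverse and the roots appear, and this is entirely supplied by Theorem \ref{thm:concpowers}.

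I do not anticipate a genuine obstacle here — the corollary is essentially a restatement of Theorem \ref{thm:concpowers} once one observes that a concatenation-stable path variety is automatically closed, tree-like saturated, and closed under the forward semigroup generated by any of its elements, hence contains the full Zariski closure $\overline{\{X^{\conc n}\mid n\in\mathbb N\}}$. The only mild subtlety is making sure the identification of "admissable roots of $X$'' with elements of the sets appearing on the right-hand side of Theorem \ref{thm:concpowers} is stated cleanly, since those sets are written in terms of $\check X$ rather than $X$; but this is immediate from $\sigma(X) = \sigma(\check X)$ and the definition of admissable power in terms of $\exp_{\conc}$ and $\log_{\conc}$ of the signature.
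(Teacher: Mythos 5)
Your proof is correct and is precisely the intended one: the paper labels Corollary \ref{cor:algsubsemigroups} as "an immediate consequence" of Theorem \ref{thm:concpowers} and supplies no further argument, and your derivation — concatenation stability plus Zariski closedness of $V$ give $V \supseteq \overline{\{X^{\conc n}\mid n\geq 1\}}$, then Theorem \ref{thm:concpowers} with $K=\{1,2,3,\dots\}$ identifies this closure with $\{Y^{\conc n}\mid n\in\mathbb{Z}\}$ (resp.\ $\{\check X^{\conc r}\mid r\in\mathbb{R}\}$), which contains the time inverse and all admissable roots of $X$ — is the only reasonable way to unpack that remark. The two bookkeeping points you flag (tree-like saturation of path varieties, and reading $\overleftarrow{X}$ and $X^{\conc(1/m)}$ off the right-hand sides of Theorem \ref{thm:concpowers}) are exactly the right ones to make explicit.
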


In other words, any algebraic subsemigroup of the group of reduced paths is in fact a subgroup stable under taking admissable roots.

\begin{example}
 Denote the arrangement of seven lines and one circle in Figure \ref{fig:circlelines} by $M$.
 Then $M$ is an algebraic subset of $\R^2$,
 and so $\pathsin{M}$ is a path variety,
 and so is $\pathsin{M}\cap\mathscr{L}_1$,
 the loops contained in $\pathsin{M}$.
 Now $\pathsin{M}\cap\mathscr{L}_1$ is stable under concatenation, and is up to tree-like equivalence precisely given by the concatenation group generated from $X,Y,Z$.
\end{example}

\begin{figure}
 \includegraphics[width=0.7\textwidth]{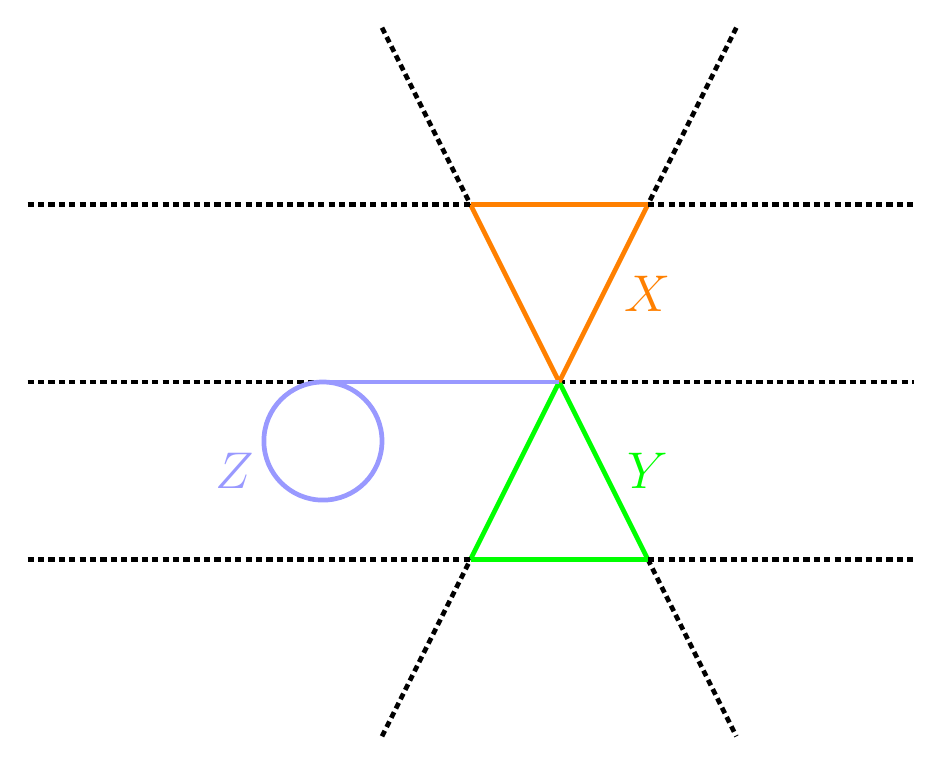}
 \caption{The concatenation group generated by the paths $X,Y,Z$ forms a path variety.}\label{fig:circlelines}
\end{figure}

Let the analytical topology on $T((\R^d))$ be the sequential one
given by $x_n\to x$ if and only if all finite truncations $\proj_{\leq k} x_n\to\proj_{\leq k} x$
in the euclidean topology on the finite dimensional vector spaces $T_{\leq k}(\R^d)$.
This makes $T((\R^d))$ a locally convex topological vector space,
with seminorms given by $|\langle\cdot,x\rangle|$, $x\in T(\R^d)$,
and thus its topological dual space can be identified with $T(\R^d)$.
Since the same topology is induced by reducing to the countable set of seminorms $|\langle\cdot,w\rangle|$, $w$ any word,
we even have a metrizable locally convex topological vector space.
As a consequence of Helly-Hahn-Banach\footnote{Eduard Helly (1884-1943). For his role in the development of what is mostly refered to as the Hahn-Banach theorem see e.g.\ \cite[Section~7.9]{NariciBeckenstein11}.}´
(cf.\ \cite[Theorem~4.7(b)]{rudin91}),
the analytically closed subspaces of $T((\R^d))$ are thus exactly the annihilators of subspaces of $T(\R^d)$.
Indeed, whenever we have an analytically closed subspace $V$ and $y\in T((\R^d))$ not contained in $V$,
let $f$ be the linear continuous functional on $V\oplus \R y$ such that $f(v+ry)=r$ for all $v\in V$.
Then, by Helly-Hahn-Banach, this functional $f$ can be extended to a continuous linear functional on all of $T((\R^d))$,
which is then an element of $V^\perp\subset T(\R^d)$
and so $y\notin (V^\perp)^\perp$.

If one does not want to work with extensions of Zermelo-Fraenkel,
then this can be circumvented by replacing any mentioning of analytically closed subspaces (sub Lie/Hopf algebra) in this section to mean instead explicitly sets of the form $W^\perp, W\subseteq T(\R^d)$ (the collection of which also does form the closed sets of a much coarser topology where closed sets are always subspaces).

\begin{theorem}\label{thm:concsubLie}
 A path variety is closed under concatenation if and only if it consists of all paths with signatures in the exponential of an analytically closed sub Lie algebra of $\mathfrak{g}((\R^d))$.
\end{theorem}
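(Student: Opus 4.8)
The statement is an "if and only if" characterizing concatenation-stable path varieties as those whose signatures form $\exp$ of an analytically closed sub Lie algebra of $\mathfrak{g}((\R^d))$. The natural route is to dualize everything: a path variety $V = \mathcal{V}(\mathcal{I}(V))$ is determined by its shuffle ideal $\mathcal{I}(V)$, and concatenation-stability should be equivalent to $\mathcal{I}(V)$ being a Hopf ideal (a coideal that is also a shuffle ideal), via Chen's identity. On the dual side, $\mathcal{I}(V)^\perp \subseteq T((\R^d))$ is then a sub-Hopf-algebra of the completed tensor Hopf algebra, and the grouplike elements $\mathcal{G}_d$ inside it are exactly $\exp_{\conc}$ of its primitive elements, which form an analytically closed sub Lie algebra of $\mathfrak{g}((\R^d)) = \mathrm{Prim}(T((\R^d)))$. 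The delicate point throughout is that the image of the signature is only \emph{dense} in $\mathcal{G}_d$, not all of it, so one must be careful that each implication survives the passage between "grouplike elements satisfying the equations" and "signatures of actual $2^-$-var paths satisfying the equations."

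\textbf{Step 1 (easy direction).} Suppose $V$ consists of all paths whose signature lies in $\exp_{\conc}(\mathfrak{h})$ for an analytically closed sub Lie algebra $\mathfrak{h} \subseteq \mathfrak{g}((\R^d))$. If $X, Y \in V$ then $\sigma(X), \sigma(Y) \in \exp_{\conc}(\mathfrak{h})$, and by Baker--Campbell--Hausdorff $\sigma(X)\conc\sigma(Y) = \exp_{\conc}(\mathrm{BCH}(\log_{\conc}\sigma(X), \log_{\conc}\sigma(Y))) \in \exp_{\conc}(\mathfrak{h})$ since $\mathfrak{h}$ is a Lie subalgebra (closed under bracket, hence the BCH series lands in the analytic closure $\mathfrak{h}$); by Chen's identity $\sigma(X\sqcup Y) = \sigma(X)\conc\sigma(Y) \in \exp_{\conc}(\mathfrak{h})$, so $X \sqcup Y \in V$. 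One should also note such $V$ is genuinely a path variety: $\exp_{\conc}(\mathfrak{h})$ being analytically closed and the analytically closed subspaces of $T((\R^d))$ being exactly the annihilators of subspaces of $T(\R^d)$ (as stated just before the theorem), together with the fact that $\exp_{\conc}(\mathfrak{h}) = \mathcal{G}_d \cap \mathfrak{h}^{\perp\perp}$-type description, lets one exhibit $V = \mathcal{V}(W)$ for a suitable $W$.

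\textbf{Step 2 (hard direction).} Suppose $V$ is a path variety stable under concatenation. Put $I = \mathcal{I}(V)$, a $2^-$-var radical shuffle ideal. First show $I$ is a coideal: for $x \in I$ and $X, Y \in V$, Chen gives $0 = \langle\sigma(X\sqcup Y), x\rangle = \langle \sigma(X)\otimes\sigma(Y), \Delta_{\conc} x\rangle$; using Theorem~\ref{thm:conc_varieties} / Theorem~\ref{thm:concpowers} to get enough paths in $V$ (and Corollary~\ref{cor:algsubsemigroups} giving that $V$ also contains inverses, so $\emptyword \in V$-closure and $\langle\emptyword, x\rangle = 0$), one argues that $\Delta_{\conc} x \in I \otimes T(\R^d) + T(\R^d)\otimes I$ — this is where one needs that $\{\sigma(X)\otimes\sigma(Y) : X,Y \in V\}$ separates points of $(T(\R^d)\otimes T(\R^d))/(I\otimes T + T\otimes I)$ well enough, which I expect to be the main obstacle, precisely because signatures are only dense in $\mathcal{G}_d$. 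The way around it is analytic: $\mathcal{V}(I)$ is analytically closed (Theorem on analytic closedness), and one shows the \emph{closure} $\overline{\{\sigma(X) : X \in V\}}$ in $T((\R^d))$ is a subgroup of $\mathcal{G}_d$ under $\conc$ — using that concatenation is analytically continuous — hence by the classical structure of $\mathcal{G}_d$ (the exponential $\mathfrak{g}((\R^d)) \to \mathcal{G}_d$ is a bijection, Lyons' argument recalled in the introduction) it equals $\exp_{\conc}(\mathfrak{h})$ for $\mathfrak{h} := \log_{\conc}(\overline{\{\sigma(X)\}})$, which one checks is an analytically closed sub Lie algebra: closed because $\log_{\conc}$ is an analytic homeomorphism, a Lie subalgebra because the set is closed under $\conc$ and inverses and BCH-inversion expresses the bracket through group operations. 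Finally $V = \{X : \sigma(X) \in \overline{\{\sigma(X):X\in V\}}\}$: "$\subseteq$" is clear, and "$\supseteq$" holds because $V = \mathcal{V}(I)$ is analytically closed so any path whose signature is an analytic limit of signatures of paths in $V$ — equivalently whose signature is orthogonal to $I$ — lies in $V$. This closes the equivalence.

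\textbf{Remark on the obstacle.} The genuinely subtle step is Step 2's passage from "the polynomial equations cutting out $V$ are respected by $\conc$ on the actual sparse set of signatures" to "$\Delta_{\conc}$ preserves $I$ as a coideal," since a naive argument would want every grouplike pair as a test element. The fix is to never leave the analytic category: work with $\overline{\sigma(V)}$, use that $\mathcal{V}$-closed sets are analytically closed and that $\conc, \log_{\conc}, \exp_{\conc}$ are analytic(ally continuous), so that density of $\sigma(V)$ in $\overline{\sigma(V)}$ is exactly what upgrades a semigroup-of-signatures statement to a Lie-subalgebra statement; and invoke Corollary~\ref{cor:algsubsemigroups} so that "semigroup" may be replaced by "group," which is what makes $\mathfrak{h}$ a subspace rather than merely a convex cone.
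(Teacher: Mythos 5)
There is a genuine gap in your Step 2, and it sits precisely at the point you flag as the "delicate" one: the passage from the closed subgroup $\overline{\sigma(V)}$ to a \emph{Lie sub\emph{algebra}} $\mathfrak{h}=\log_{\conc}\overline{\sigma(V)}$. You claim $\mathfrak{h}$ is a Lie subalgebra "because the set is closed under $\conc$ and inverses and BCH-inversion expresses the bracket through group operations," invoking Corollary~\ref{cor:algsubsemigroups} to pass from semigroup to group "which is what makes $\mathfrak{h}$ a subspace rather than merely a convex cone." But group structure only gives $\Z\,\ell\subseteq\mathfrak{h}$ for $\ell\in\mathfrak{h}$; it does not give the real-scalar closure $\R\,\ell\subseteq\mathfrak{h}$ that you need before you can even write down the BCH-limit formulas for the sum and the bracket, as those formulas require $(1/n)\ell\in\mathfrak{h}$. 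And the analytic closure of $\sigma(V)$ genuinely fails to supply this in general: if $X$ is, say, a semicircle (so $\check{X}$ is not infinitely divisible), then $V=\overline{\{X^{\conc n}:n\in\Z\}}$ is a path variety stable under concatenation, $\overline{\sigma(V)}=\{\sigma(X)^{\conc n}:n\in\Z\}$ has no analytic accumulation points (its level-one component diverges), and so $\mathfrak{h}=\Z\log_{\conc}\sigma(X)$ --- a discrete $\Z$-module, not a Lie algebra. The theorem statement is still true for this $V$, but \emph{not} with your $\mathfrak{h}$.

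The paper avoids this by working with a different and in general strictly larger object: $L=\{\ell\in\mathfrak{g}((\R^d))\,:\,\langle\exp_{\conc}(\ell),x\rangle=0\ \forall x\in\mathcal{I}(V)\}$. Since $L$ is cut out by \emph{linear} equations, the function $r\mapsto\langle\exp_{\conc}(r\ell),x\rangle$ is a polynomial in $r$; once BCH-closedness of $L$ puts $\N\ell\subseteq L$, the polynomial vanishes at infinitely many integers and hence identically, giving $\R\ell\subseteq L$. That real-scalar closure is the hinge that then lets the BCH-limit computations produce $+$ and $[\cdot,\cdot]$. Both $\exp_{\conc}(L)$ and your $\overline{\sigma(V)}$ cut out the same set $V$ of \emph{paths} (you prove this correctly at the end of Step 2), but only $L$ is reliably a Lie algebra. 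Your Step 1 is fine and essentially the paper's argument; to repair Step 2, replace $\mathfrak{h}=\log_{\conc}\overline{\sigma(V)}$ by the algebraically defined $L$, establish BCH-closedness of $L$ by the argument of Proposition~\ref{prop:closedconc}, and then run the polynomial-interpolation argument for real scalars before the limit formulas for sum and bracket.
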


\begin{proof}
 Let $V$ be a path variety closed under concatenation and $L=\{\ell\in\mathfrak{g}((\R^d))|\langle\exp_{\conc}(\ell),x\rangle=0\,\forall x\in\mathcal{I}(V)\}$.
 First of all, $L$ is analytically closed,
 since $\log_{\conc}$ is analytically continuous.
 Then, by an argument analogous to the proof of Proposition \ref{prop:closedconc},
 $L$ is closed under the BCH-product $(\ell_1,\ell_2)\mapsto\log_{\conc}(\exp_{\conc}(\ell_1)\conc\exp_{\conc}(\ell_2))$.
 By Theorem \ref{thm:concpowers},
 for any $\ell\in L$ and $r\in\mathbb{R}$,
 we have $r\ell \in L$.
 Then by the BCH-formula,
 we have that $n\log_{\conc}(\exp_{\conc}((1/n)\ell_1)\conc \exp_{\conc}((1/n)\ell_2))$ converges to $\ell_1+\ell_2$,
 so $L$ is an analytically closed vector space.
 Furthermore, $2n^2\log_{\conc}(\exp_{\conc}((1/n)\ell_1)\conc \exp_{\conc}((1/n)\ell_2))-2n\ell_1-2n\ell_2$ converges to
 $[\ell_1,\ell_2]$,
 so $L$ is an analytically closed Lie algebra.

 If conversely $L$ is an analytically closed Lie algebra,
 then $V:=\{X\in\varltwo{\R^d}|\log_{\conc}\sigma(X)\in L\}$ is stable under concatenation by the BCH-formula.
 Finally, $V=\{X\in\varltwo{\R^d}|\langle\sigma(X),c_x\rangle\,\forall x\in L^\perp\}$,
 and thus a path variety,
 where $L^\perp$ is the annihilator of $L\subset T((\R^d))$ in $T(\R^d)$ and $c_x$ is the coordinate of the first kind corresponding to $x$ such that $\langle\ell,x\rangle=\langle\exp_{\conc}(\ell),c_x\rangle$ for all $\ell\in\mathfrak{g}((\R^d))$.

\end{proof}

\begin{proposition}\label{prop:closedconc}
 Let $U\subseteq\varltwo{\R^d}$ be closed under concatenation. Then $\bar U$ is closed under concatenation (and, by Corollary \ref{cor:algsubsemigroups}, under time inverse and admissable roots).
\end{proposition}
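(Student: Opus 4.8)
The plan is to run the standard argument that the closure of a subsemigroup of a topological semigroup whose one-sided translations are continuous is again a subsemigroup, drawing the continuity (in fact homeomorphy) of $Z\mapsto X\sqcup Z$ and $Z\mapsto Z\sqcup X$ from Theorem~\ref{thm:path_conc_variety}. Throughout, $\bar U=\mathcal{V}(\mathcal{I}(U))$ denotes the path Zariski closure, which by the definition of the topology in Section~\ref{sec:topology} is a genuine topological closure operator; the one structural fact I rely on is that any homeomorphism $f$ satisfies $f(\bar A)=\overline{f(A)}$.

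First I would fix $X\in U$. Since $U$ is closed under concatenation, $X\sqcup U\subseteq U\subseteq\bar U$. The map $\ell_X\colon Z\mapsto X\sqcup Z$ is a Zariski homeomorphism by Theorem~\ref{thm:path_conc_variety}, so
\begin{equation*}
 X\sqcup\bar U=\ell_X(\bar U)=\overline{\ell_X(U)}=\overline{X\sqcup U}\subseteq\bar U,
\end{equation*}
the final inclusion holding because $\bar U$ is closed and contains $X\sqcup U$. Letting $X$ range over all of $U$ gives $U\sqcup\bar U\subseteq\bar U$.

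Next I would fix $Y\in\bar U$ and feed this back in: from $U\sqcup\bar U\subseteq\bar U$ we get $U\sqcup Y\subseteq\bar U$. Now the map $r_Y\colon Z\mapsto Z\sqcup Y$ is again a Zariski homeomorphism by Theorem~\ref{thm:path_conc_variety}, hence
\begin{equation*}
 \bar U\sqcup Y=r_Y(\bar U)=\overline{r_Y(U)}=\overline{U\sqcup Y}\subseteq\bar U.
\end{equation*}
Since $Y\in\bar U$ was arbitrary, this yields $\bar U\sqcup\bar U\subseteq\bar U$, i.e.\ $\bar U$ is closed under concatenation. The parenthetical claims are then immediate: $\bar U$ is a path variety stable under concatenation, so Corollary~\ref{cor:algsubsemigroups} applies and $\bar U$ is also stable under time inverse and under taking admissable roots.

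I do not expect a genuine obstacle here. The only points needing care are (i) that $\mathcal{V}\circ\mathcal{I}$ really is the topological closure for the path Zariski topology, so that the identity $f(\bar A)=\overline{f(A)}$ for a homeomorphism $f$ is available, and (ii) keeping track of which side the translation acts on, so that $\ell_X$ is used in the first step and $r_Y$ in the second; swapping them would not close the argument. Both are routine, and no property of $U$ beyond closure under $\sqcup$ (in particular not stability under tree-like equivalence) is used.
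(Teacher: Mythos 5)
Your argument is correct, but it takes a genuinely different route from the paper. The paper proves the statement by introducing an auxiliary Zariski-type topology on $\varltwo{\R^d}\times\varltwo{\R^d}$ whose closed sets are cut out by elements of $T(\R^d)\otimes T(\R^d)$, showing that $\sqcup$ is jointly continuous for it via $\sqcup^{-1}(V)=\mathcal{V}(\Delta_{\conc}\mathcal{I}(V))$, proving $\overline{U_1\times U_2}=\bar U_1\times\bar U_2$ by an exchange argument, and then intersecting $\sqcup^{-1}(\bar U)$ with $\overline{U\times U}$. You instead run the classical two-step semigroup-closure argument using only the separate continuity of the one-sided translations $\ell_X$ and $r_Y$, which is indeed available from Theorem~\ref{thm:path_conc_variety}: the preimage of a variety $V$ under $Z\mapsto X\sqcup Z$ is $\overleftarrow{X}\sqcup V$ (here one uses that path varieties are stable under tree-like equivalence), hence a variety. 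Your route is more elementary and avoids the product construction; the paper's route has the side benefit of establishing the product topology and the product-closure identity, which are of independent use. One small caveat: $\ell_X$ is not literally a bijection of $\varltwo{\R^d}$ (its would-be inverse $\ell_{\overleftarrow{X}}$ returns only a tree-like equivalent path), so ``homeomorphism'' and the identity $\ell_X(\bar A)=\overline{\ell_X(A)}$ should not be taken entirely at face value; but your chain only needs the inclusion $\ell_X(\bar U)\subseteq\overline{\ell_X(U)}$, which follows from Zariski continuity alone, and the stated equality can anyway be recovered because $X\sqcup\bar U$ is itself closed by Theorem~\ref{thm:path_conc_variety}, so nothing breaks.
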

\begin{proof}
 For the porpose of this proof,
 let $\varltwo{\R^{d}}\times\varltwo{\R^d}$ be endowed with the topology such that the closed sets are all sets of the form
 \begin{equation*}
  \mathcal{V}(W):=\{(X,Y)|\langle\sigma(X)\otimes\sigma(Y),w\rangle=0\,\forall w\in W\}
 \end{equation*}
 where $W\subseteq T(\R^d)\otimes T(\R^d)$.
 Then $\sqcup:\varltwo{\R^{d}}\times\varltwo{\R^d}\to\varltwo{\R^d}$ is continuous,
 as for any sub path variety $V\subseteq\varltwo{\R^d}$ and any $x\in\mathcal{I}(V)$, we have
 \begin{equation*}
  \langle\sigma(X\sqcup Y),x\rangle=\langle\sigma(X)\otimes\sigma(Y),\Delta_{\conc} x\rangle,
 \end{equation*}
 so $\sqcup^{-1}(V)=\mathcal{V}(\Delta_{\conc}\mathcal{I}(V))$ and $\Delta_{\conc}\mathcal{I}(V)\subseteq T(\R^d)\otimes T(\R^d)$.

 Furthermore, for any $U_1,U_2\subseteq\varltwo{\R^d}$,
 we have $\bar{U}_1\times\bar{U}_2=\overline{U_1\times U_2}$.
 Indeed, $ \overline{U_1\times U_2}\subseteq\bar{U}_1\times\bar{U}_2$,
 as $\bar{U}_1\times\bar{U}_2=\mathcal{V}(\mathcal{I}(U_1)\otimes\emptyword+\emptyword\otimes\mathcal{I}(U_2))$,
 and, following an argument by \cite{Dap},
 $\bar{U}_1\times\bar{U}_2\subseteq\overline{U_1\times U_2}$,
 as for any $\sum_{i}x_i\otimes y_i\in T(\R^d)\times T(\R^d)$ such that $\sum_i\langle\sigma(X)\otimes\sigma(Y),x_i\otimes y_i\rangle=0$ for all $(X,Y)\in U_1\times U_2$,
 we have $\sum_i\langle\sigma(X),x_i\rangle y_i\in\mathcal{I}(U_2)$ for all $X\in U_1$,
 so in fact $\sum_i\langle\sigma(X)\otimes\sigma(Y),x_i\otimes y_i\rangle=0$ for all $(X,Y)\in U_1\times \bar{U}_2$,
 and then $\sum_i\langle\sigma(Y),y_i\rangle x_i\in\mathcal{I}(U_1)$ for all $X\in\bar{U}_2$,
 so in fact $\sum_i\langle\sigma(X)\otimes\sigma(Y),x_i\otimes y_i\rangle=0$ for all $(X,Y)\in \bar{U}_1\times \bar{U}_2$.

 Now, if we intersect the preimage of $\bar U$ under concatenation in $\varltwo{\R^{d}}\times\varltwo{\R^d}$ with $\bar U\times \bar U=\overline{U\times U}$,
 then we obtain a closed set due to the continuity of $\sqcup$,
 a closed set that contains $U \times U$, so it must in fact be $\bar U\times \bar U$.
\end{proof}

\begin{corollary}
 The set of lattice paths $\mathfrak{L}$ is Zariski dense in $\varltwo{\R^d}$.
\end{corollary}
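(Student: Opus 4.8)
The plan is to use that lattice paths form a concatenation-closed set and then read off the consequence from the Lie-algebraic description of concatenation-stable path varieties. First I would check that $\mathfrak{L}\subseteq\varltwo{\R^d}$ (lattice paths are piecewise linear, hence of bounded variation) and that $\mathfrak{L}$ is stable under concatenation: if $X$ ends at a lattice point and $Y$ traces unit axis-parallel steps through $\mathbb{Z}^d$, then in $X\sqcup Y$ the path $Y$ is translated by $X_{T_1}-Y_0\in\mathbb{Z}^d$, so $X\sqcup Y$ still traces unit axis-parallel steps through $\mathbb{Z}^d$. By Proposition \ref{prop:closedconc}, $\overline{\mathfrak{L}}$ is then a path variety stable under concatenation, so by Theorem \ref{thm:concsubLie} there is an analytically closed Lie subalgebra $L\subseteq\mathfrak{g}((\R^d))$ with $\overline{\mathfrak{L}}=\{X\in\varltwo{\R^d}\mid\log_{\conc}\sigma(X)\in L\}$.

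The remaining task is to show $L=\mathfrak{g}((\R^d))$. For each letter $\word{i}$, the single unit step $t\mapsto t\,\word{i}$ on $[0,1]$ is a lattice path with $\sigma=\exp_{\conc}(\word{i})$, hence $\word{i}=\log_{\conc}\sigma\in L$. Since $L$ is a Lie subalgebra it contains the Lie subalgebra generated by $\word{1},\dots,\word{d}$, i.e.\ the free Lie algebra $\mathfrak{g}(\R^d)$ of Lie polynomials; and since $L$ is analytically closed while every Lie series is the analytic limit of its truncations (each of which is a Lie polynomial), we get $L\supseteq\mathfrak{g}((\R^d))$, so $L=\mathfrak{g}((\R^d))$. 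Finally, every signature is grouplike, so $\log_{\conc}\sigma(X)\in\mathfrak{g}((\R^d))=L$ for every $X\in\varltwo{\R^d}$, which forces $\overline{\mathfrak{L}}=\varltwo{\R^d}$.

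There is no genuine obstacle here once Proposition \ref{prop:closedconc} and Theorem \ref{thm:concsubLie} are available: the only points needing a word of justification are the elementary verification that $\mathfrak{L}$ is concatenation-closed and the observation that $\mathfrak{g}(\R^d)$ is dense in $\mathfrak{g}((\R^d))$ for the analytic topology (immediate from its definition via finite truncations). The mildly delicate conceptual point is simply that an analytically closed Lie subalgebra of $\mathfrak{g}((\R^d))$ containing all the letters must be the whole of $\mathfrak{g}((\R^d))$, which is exactly the computation above.
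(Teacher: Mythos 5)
Your proof is correct and follows the same route as the paper: show $\mathfrak{L}$ is concatenation-closed, deduce via Proposition \ref{prop:closedconc} and Theorem \ref{thm:concsubLie} that $\overline{\mathfrak{L}}$ corresponds to an analytically closed Lie subalgebra containing the letters, and conclude it must be all of $\mathfrak{g}((\R^d))$. You supply the small density argument ($\mathfrak{g}(\R^d)$ dense in $\mathfrak{g}((\R^d))$) that the paper leaves implicit, but the structure is identical.
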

\begin{proof}
 Since the concatenation of two lattice paths is again a lattice path,
 the Zariski closure $\bar{\mathfrak{L}}$ is also stable under concatenation.
 But since the path variety $\bar{\mathfrak{L}}$ includes the linear path of unit length in any standard basis vector direction,
 the analytically closed Lie algebra $\{\ell\in\mathfrak{g}((\R^d))|\langle\exp_{\conc}(\ell),x\rangle=0\,\forall x\in\mathcal{I}(\mathfrak{L})\}$
 is actually the full $\mathfrak{g}((\R^d))$
 and so $\bar{\mathfrak{L}}=\varltwo{\R^d}$.
\end{proof}

\begin{theorem}\label{thm:subHopfalg}
 If $M\subseteq\varltwo{\R^d}$ is a set of paths closed under concatenation,
 then $\bar{M}$ consists of all paths with signatures in a closed subalgebra of $(T((\R^d)),\conc)$
and $\mathcal{I}(M)$ is a Hopf ideal.
\end{theorem}
\begin{proof}
Since $\spann\sigma(M)$ is a subalgebra of $(T((\R^d)),\conc)$, we have that $\mathcal{I}(M)$ is a coideal.
 Thus $\mathcal{I}(M)^{\perp}$ is once again a subalgebra of $(T((\R^d)),\conc)$.
 By definition, $\bar{M}$ consists of all the paths with signatures in $H$.
Since $\mathcal{I}(M)$ is a coideal, $\bar{M}$ is closed under concatenation,
 and thus under time inverse due to \ref{cor:algsubsemigroups},
 $\mathcal{I}(M)=\mathcal{I}(\bar{M})$ is closed under the antipode,
 and so $H$ is closed under the (dual) antipode.
\end{proof}

\begin{remark}
Note that $H:=\mathcal{I}(M)^{\perp}\cap T(\R^d)$ is a Hopf algebra, since it is the orthogonal complement of $\mathcal{I}(M)$ in $T(\R^d)$.
However in general the analytical closure of $H$ could be strictly smaller than $\mathcal{I}(M)^{\perp}$ and does not need to contain all the signatures in $M$.
This is the case though e.g.\ when $M$ is also stable under rescaling, because then $\mathcal{I}(M)$ is homogeneous.
\end{remark}
\begin{remark}
The only use of Corollary \ref{cor:algsubsemigroups} is to show that $\mathcal{I}(M)$ is not just a biideal,
but a Hopf-ideal.
In fact, since $(T(\R^d),\shuffle,\Delta_{\conc},\antipode)$ is a commutative Hopf algebra,
any biideal is a Hopf ideal due to \cite[Theorem~1]{Nichols78}. So alternatively, we could also show Theorem \ref{thm:subHopfalg} using this general result,
and obtain Corollary \ref{cor:algsubsemigroups} as a consequence.
\end{remark}

\section{Dimension}\label{sec:dim}

A path variety $V$ is called countably reducible if there is a family of varieties $(V_n)_{n\in\mathbb{N}}$,
none of them equal to $V$,
such that $V=\bigcup_{n\in\mathbb{N}}V_n$.
Otherwise it is called countably irreducible.

Apart form this stricter notion of irreducibilty,
the definition of the dimension of a path variety is somewhat analogous to classical algebraic geometry (see e.g.\ \cite[Definition~2.8.1., Theorem~2.8.3.]{bochnakcosteroy}, \cite[Definition 2.25]{gathmann}),
however note Remark \ref{rem:irred_chain}.

\begin{definition}
The empty path variety has dimension $0$.

We say that all path varieties of the form $\overline{\{X\}}$, $X$ a path, and all varieties which are finite or countably infinite unions of them have dimension $0$.

We say that a path variety has dimension $n+1$ if it is a countably irreducible path variety such that it has a proper sub path variety of dimension $n$, and all proper subvarieties have been assigned a dimension in $\{0,\dots, n\}$.
We furthermore say that all path varieties which are finite or countable unions of dimension $n+1$ path varieties have dimension $n+1$.

If a countably irreducible path variety does not have a finite dimension, it is said to be infinite dimensional. Varieties which are unions of arbitrary cardinality of infinite dimensional variaties are also called infinite dimensional.
\end{definition}

\begin{remark}\label{rem:irred_chain}
 If a path variety is finite dimensional, then its dimension is equal to the largest $n$ such that there are non-empty countably irreducible varieties $V_1,\dots, V_n$ with
 $V_1\subsetneq \dots\subsetneq V_n\subseteq V$.
 It is open whether for an infinite dimensional path variety, there are countably irreducible varieties $V_1,\dots, V_n$ with
 $V_1\subsetneq \dots\subsetneq V_n\subseteq V$ for any $n$.
\end{remark}

The proof of the following lemma is along the lines of the proof of the Chen-Chow theorem in the version of \cite[Theorem~7.28]{FrizVictoir10}.
\begin{lemma}
 For any $n\in\mathbb{N}$, there is a polynomial map $q:\mathfrak{g}^n(\R^d)\to\varltwo{\R^d}$ that only takes values in the piecewise linear paths such that $\proj_{\leq n}\log_{\conc}\sigma(q(x))=x$ for all $x\in\mathfrak{g}^n(\R^d)$.
\end{lemma}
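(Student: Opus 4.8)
The plan is to argue by induction on $n$, at each step removing the top homogeneous degree of $x$ and restoring it by appending a concatenation of straight-line ``commutator'' pieces, following the strategy of the Chen--Chow proof in \cite[Theorem~7.28]{FrizVictoir10}; the one new ingredient is to keep track of the fact that every choice can be made polynomially in $x$. Throughout, a polynomial map $q:\mathfrak{g}^n(\R^d)\to\varltwo{\R^d}$ taking values in the piecewise linear paths will mean a fixed integer $N=N(n,d)$ together with polynomial maps $v_1,\dots,v_N:\mathfrak{g}^n(\R^d)\to\R^d$ such that $q(x)$ is the concatenation of the straight lines with increments $v_1(x),\dots,v_N(x)$ (degenerate, i.e.\ constant, pieces permitted). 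I write $\mathfrak{g}^{(m)}(\R^d)$ for the degree-$m$ homogeneous component of the free Lie algebra. For $n=1$ I would take $q_1(x)$ to be the straight line with increment $x$, so that $\log_{\conc}\sigma(q_1(x))=x$.

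For the inductive step assume $q_{n-1}$ is constructed. Given $x\in\mathfrak{g}^n(\R^d)$ I would split $x=x'+x^{(n)}$ with $x'=\proj_{\leq n-1}x\in\mathfrak{g}^{n-1}(\R^d)$ and $x^{(n)}\in\mathfrak{g}^{(n)}(\R^d)$, form $q_{n-1}(x')$, and set $c_n(x):=\proj_n\log_{\conc}\sigma(q_{n-1}(x'))\in\mathfrak{g}^{(n)}(\R^d)$; since the level-$\leq n$ truncation of $\log_{\conc}\sigma$ of a concatenation of straight lines is a fixed polynomial in their increments and $q_{n-1}$ is polynomial, $c_n$ is a polynomial function of $x$. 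It then remains to append a correction realizing $x^{(n)}-c_n(x)$ in degree $n$ and nothing in lower degrees. Using the classical identity $\mathfrak{g}^{(n)}(\R^d)=\sum_i[\,\w i,\mathfrak{g}^{(n-1)}(\R^d)\,]$ (see \cite{reutenauer93}), fix a basis $([\,\w{i_\alpha},b_\alpha\,])_\alpha$ of $\mathfrak{g}^{(n)}(\R^d)$ with the $b_\alpha\in\mathfrak{g}^{(n-1)}(\R^d)$ fixed, and for $w=\sum_\alpha t_\alpha[\,\w{i_\alpha},b_\alpha\,]$ put
\[
\rho_n(w):=\bigsqcup_\alpha\big(L(t_\alpha\w{i_\alpha})\sqcup q_{n-1}(b_\alpha)\sqcup\overleftarrow{L(t_\alpha\w{i_\alpha})}\sqcup\overleftarrow{q_{n-1}(b_\alpha)}\big),
\]
where $L(v)$ is the straight line with increment $v$. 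By Chen's identity the $\alpha$-th factor has signature $A\conc B\conc A^{\conc -1}\conc B^{\conc -1}$ with $\log_{\conc}A=t_\alpha\w{i_\alpha}$ of degree $1$ and $\log_{\conc}B=b_\alpha+(\text{degree}\geq n)$; a Baker--Campbell--Hausdorff degree count then shows $\proj_{<n}\log_{\conc}$ of this factor vanishes while its $\proj_n$ equals $t_\alpha[\,\w{i_\alpha},b_\alpha\,]$, since every BCH term other than the single bracket $[\log_{\conc}A,\log_{\conc}B]$ involves $\geq 2$ brackets and hence has degree $\geq n+1$. Concatenating over $\alpha$ (cross BCH terms now have degree $\geq 2n>n$) gives $\proj_{<n}\log_{\conc}\sigma(\rho_n(w))=0$ and $\proj_n\log_{\conc}\sigma(\rho_n(w))=w$, and $\rho_n$ is visibly polynomial (in fact affine-linear) in $w$ since the only $w$-dependence sits in the increments $t_\alpha\w{i_\alpha}$ of the outer straight lines.

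Finally I would set $q_n(x):=q_{n-1}(x')\sqcup\rho_n\big(x^{(n)}-c_n(x)\big)$. Appending $\rho_n(\cdot)$, whose signature is $\exp_{\conc}$ of an element supported in degrees $\geq n$, leaves $\proj_{\leq n-1}\log_{\conc}\sigma$ unchanged and shifts $\proj_n\log_{\conc}\sigma$ by $x^{(n)}-c_n(x)$ (again a BCH degree count), so $\proj_{\leq n}\log_{\conc}\sigma(q_n(x))=x'+c_n(x)+\big(x^{(n)}-c_n(x)\big)=x$; and $q_n$ is polynomial because it is assembled from $q_{n-1}$, the fixed paths $q_{n-1}(b_\alpha)$, their time reversals, straight lines with increments linear in $x$, and the polynomial map $c_n$. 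The main obstacle is purely one of bookkeeping: ensuring simultaneously that the number of linear pieces is independent of $x$, that the corrections are \emph{exact} at the truncation level rather than merely correct to leading order, and that everything stays polynomial in $x$ — the first handled by using a fixed basis of each $\mathfrak{g}^{(m)}$, the second by the BCH degree counts above, and the third being precisely the refinement of Chen--Chow we are after. It is also worth stating explicitly that time reversal and concatenation preserve piecewise linearity, that a constant piece arising when some $t_\alpha=0$ does no harm, and that $\proj_{\leq n}\log_{\conc}\sigma$ of a concatenation of straight lines is a universal polynomial in their increments.
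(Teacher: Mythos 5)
Your argument is correct, and it follows the same inductive skeleton as the paper's proof: construct $q_{n-1}$, measure the degree-$n$ discrepancy of $\log_{\conc}\sigma(q_{n-1}(x'))$, and append piecewise linear correction paths whose truncated log-signature is supported purely in the top degree, using BCH degree counts to see that nothing below the top degree is disturbed. Where you genuinely differ is in how the correction paths are produced. The paper fixes a basis $(b_i)_i$ of the top homogeneous component, invokes Chen--Chow to get fixed piecewise linear paths $X_i$ with $\proj_{\leq n+1}\log_{\conc}\sigma(X_i)=b_i$, and appends scalar multiples of the $X_i$ with the (polynomial) correction coefficients; you instead realize a basis $[\w{i_\alpha},b_\alpha]$ of the top degree by explicit group commutators $L(t_\alpha\w{i_\alpha})\sqcup q_{n-1}(b_\alpha)\sqcup\overleftarrow{L(t_\alpha\w{i_\alpha})}\sqcup\overleftarrow{q_{n-1}(b_\alpha)}$, with the scalar riding on the degree-one straight-line piece. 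Your variant is self-contained (the auxiliary paths come from the recursion rather than from Chen--Chow) and, more importantly, it makes polynomiality manifest: pointwise scaling of a path by $c$ multiplies the degree-$(n+1)$ part of its log-signature by $c^{n+1}$, so the paper's appended pieces contribute $c_i^{n+1}b_i$ rather than $c_i b_i$, and as literally written that construction would need $(n+1)$-st roots (plus sign adjustments via time reversal when $n+1$ is even), which are not polynomial in $x$; your commutator trick places the coefficient in degree one, where scaling is exactly linear, and thereby settles precisely the delicate point of the lemma. Your degree counts (every BCH term beyond the single bracket has at least three letters, at least one of degree one and one of degree at least $n-1$, hence degree at least $n+1$; cross terms between the $\alpha$-factors and with $q_{n-1}(x')$ likewise land above level $n$) are the right justification, and the bookkeeping of a fixed number of pieces with increments depending polynomially on $x$ is handled correctly.
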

\begin{proof}
 We proceed by induction.
 For $n=1$, we can just take $q(x)$ to be the linear path from $0$ to $x$.
 Suppose we have constructed such a polynomial map $q_n$ for $n\in\mathbb{N}$.
 Then take a basis $(b_i)_i$ of $\mathfrak{g}_{n+1}(\R^d)$, a dual basis $(b'_i)_i$ in $T_n(\R^d)$ such that $\langle b_i,b'_j\rangle=\delta_{ij}$,
 and a piecewise linear path $X_i$ with $\proj_{\leq n+1}\log_{\conc}\sigma(X_i)=b_i$ (exists by the Chen-Chow theorem).
 Put $q_{n+1}(l):=q_n(\proj_{\leq n}l)\sqcup(\langle l,b'_i\rangle-\langle\sigma(q_n(\proj_{\leq n}l)),b'_i\rangle)\cdot X_i$.
 Then we have $\proj_{\leq k}\log_{\conc}\sigma(q(x))=x$ by Chen's identity and the BCH formula.
\end{proof}

\begin{lemma}\label{lem:biregular}
 Let $M\subseteq\R^d$ be a point variety and $V\subseteq\varltwo{\R^t}$ be a path variety
 such that there are regular maps $F:\R^d\to\varltwo{\R^t}$ and $G:\varltwo{\R^t}\to\R^d$
 with $G(F(a))=a$ for all $a\in M$ and
 $F(G(X))$ tree-like equivalent to $X$ for all $X\in V$.
 Then $V$ is countably irreducible if and only if $M$ is irreducible,
 and $V$ is of dimension $n$ if and only if $M$ is of dimension $n$.
\end{lemma}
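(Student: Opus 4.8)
The plan is to use $F$ and $G$ to build an inclusion-preserving bijection between the sub-point-varieties of $M$ and the sub-path-varieties of $V$, and then to read both statements off the recursive definition of dimension. First I would note that regular maps are Zariski continuous by Proposition~\ref{prop:regfunc}, so $\beta(W):=G^{-1}(W)\cap V$ is a path variety contained in $V$ for every sub-point-variety $W\subseteq M$, and $\alpha(U):=F^{-1}(U)\cap M$ is a point variety contained in $M$ for every sub-path-variety $U\subseteq V$. The hypotheses say $G\circ F=\mathrm{id}$ on $M$ and $F\circ G\sim\mathrm{id}$ on $V$ up to tree-like equivalence; since a path variety contains every path tree-like equivalent to one of its points, I read them as also giving $F(M)\subseteq V$ and $G(V)=M$. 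Two further remarks are needed: because $G$ is regular it factors through the signature, so $F(a)$ tree-like equivalent to $F(a')$ implies $a=G(F(a))=G(F(a'))=a'$ for $a,a'\in M$; and $F(G(X))\sim X$ together with tree-like-closedness of $U$ gives $F(G(X))\in U\iff X\in U$ for any sub-path-variety $U\subseteq V$ and any $X\in V$.

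Given this, $\alpha$ and $\beta$ are mutually inverse: $\alpha(\beta(W))=\{a\in W:F(a)\in V\}=W$ using $G\circ F=\mathrm{id}$ on $M$ and $F(M)\subseteq V$, while $\beta(\alpha(U))=\{X\in U:G(X)\in M\}=U$ using $G(V)=M$ and the last remark above. Both maps preserve inclusions and (being preimages) arbitrary unions, and they identify $\{a\}$ with $\overline{\{F(a)\}}$; by the injectivity remark $\alpha(\overline{\{X\}})$ is a single point or empty. Next I would match irreducibility with countable irreducibility across this bijection. If $W\subseteq M$ is irreducible and $\beta(W)=\bigcup_{n\in\N}U_n$ with each $U_n$ a proper sub-path-variety, then $W=\bigcup_n\alpha(U_n)$ is a countable union of proper sub-point-varieties of $W$; but an irreducible real point variety is either a point or uncountable, and in the latter case it cannot be a countable union of proper (hence lower-dimensional) subvarieties — restricting to a generic affine line not contained in any of them would exhibit it as a countable union of finite sets. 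This contradiction shows $\beta(W)$ is countably irreducible. Conversely a splitting $W=W_1\cup W_2$ into proper closed subsets transports to $\beta(W)=\beta(W_1)\cup\beta(W_2)$ with both pieces proper, so $\beta(W)$ countably irreducible forces $W$ irreducible.

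For the dimension statement I would prove by induction on $n\geq 0$ the biconditional $\dim W\leq n\iff\dim\beta(W)\leq n$ for all sub-point-varieties $W\subseteq M$; applying it to $W=M$ gives that $V=\beta(M)$ is finite-dimensional with $\dim V=\dim M$. The case $n=0$ is the base case above: $W$ finite gives $\beta(W)=\bigcup_{a\in W}\overline{\{F(a)\}}$, which has dimension $0$, and conversely if $\dim\beta(W)=0$ then $\beta(W)$ is a countable union of tree-like classes, so $W=\bigcup_i\alpha(\overline{\{X_i\}})$ is a countable union of points, forcing the real point variety $W$ to be finite. For the inductive step I would decompose $W$ into its irreducible components and use the classical fact that an irreducible point variety $C$ has dimension $n$ precisely when it has a proper irreducible closed subset of dimension $n-1$ and all its proper closed subsets have dimension $\leq n-1$; each clause transports across the bijection (countable irreducibility from the previous step, properness because the bijection preserves inclusions, the dimension bounds by the induction hypothesis), so the recursive definition yields $\dim\beta(C)=\dim C$ for every component, and taking the maximum (the dimension of a finite union of path varieties being the maximum of the dimensions, which is immediate from the definition) gives $\dim V=\dim M$.

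I expect the main obstacle to be not any single computation but the careful alignment of the unusual recursive definition of path-variety dimension — in particular its "countably irreducible" and "finite or countable union" clauses — with the classical Krull dimension of point varieties; it is exactly at the step distinguishing countable from finite unions that one must invoke the uncountability of $\R$, so that "countably irreducible" is no weaker than "irreducible" on the point-variety side.
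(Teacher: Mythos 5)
Your overall strategy mirrors the paper's proof closely: both establish a correspondence between the lattice of sub-point-varieties of $M$ and the lattice of sub-path-varieties of $V$ (you via preimages $\alpha,\beta$, the paper via the image map $W\mapsto G(W)$), and both then transport (countable) irreducibility and the recursive dimension definition across that correspondence. Your version is more explicit and careful than the paper's in two places: checking that $\alpha,\beta$ are mutually inverse, and carrying out the dimension argument by an induction on $n$ rather than asserting it ``by the definitions.'' Those are genuine improvements in rigour.

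There is, however, a gap in the one substantive claim you try to prove that the paper merely asserts, namely that an irreducible real point variety is countably irreducible. Your proposed argument is to take a generic affine line $L$ not contained in any of the proper subvarieties $W_n$ and conclude that $L\cap W=\bigcup_n(L\cap W_n)$ is a countable union of finite sets. But for this to contradict anything one needs $L\cap W$ to be uncountable, i.e.\ $L\subseteq W$. A generic line in $\mathbb{R}^d$ typically meets a real variety $W$ in only finitely many points (take $W$ a sphere: no line lies in it), so the restriction gives no contradiction. The fact is still true, but the argument has to be run differently: work in a Euclidean neighbourhood $B$ of a smooth point of $W$, where $W$ is an $m$-manifold with $m=\dim W\geq 1$; each $W_n$, being a proper Zariski-closed subset of the irreducible $W$, has strictly smaller real dimension, hence $W_n\cap B$ is closed and nowhere dense in $B$ (or has $m$-dimensional Hausdorff measure zero), and Baire's theorem (or countable subadditivity of measure) forbids $B=\bigcup_n(W_n\cap B)$. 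With that replacement your proof is complete and follows the same route as the paper's.
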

\begin{proof}
 Since regular maps are Zariski continuous,
 $W=\overline{F(G(W))}$
 (Zariski closure here reduces to including all tree-like equivalent paths)
 is a sub path variety of $V$ if and only if $G(W)$ is a sub variety of $M$.
 This means $V=\bigcup_{n\in\mathbb{N}}W_n$ for subvarieties $W_n\subsetneq V$
 if and only  $M=G(V)=\bigcup_{n\in\mathbb{N}}G(W_n)$ for subvarieties $G(W_n)\subsetneq M$.
 Since point varieties are irreducible if and only they are countably irreducible,
 we get that $W$ is countably irreducible if and only if $G(W)$ is irreducible.
 This then also means a sub path variety of $V$ is countably irreducible if and only if the correpsonding subvariety of $M$ is irreducible,
 and by the definitions of dimensions for path and point varieties this means that $V$ is of dimension $n$ if and only if $M$ is of dimension $n$.
\end{proof}
The situation the Lemma describes means that the path variety of reduced paths $\check{V}$ in $\varltwogroup{\R^t}$
and the point variety $M$ are biregular.

In precisely the same way,
it can be seen that if $V_1\subseteq\varltwo{\R^t}$ and $V_2\subseteq\varltwo{\R^d}$ are varieties such that there are regular maps
$F:\varltwo{\R^d}\to\varltwo{\R^t}$ and $G:\varltwo{\R^t}\to\varltwo{\R^d}$
 with $G(F(X))$ tree like equivalent to $X$ for all $X\in V_2$ and
 $F(G(X))$ tree-like equivalent to $X$ for all $X\in V_1$,
 then $V_1$ and $V_2$ share their dimensionality and possible (countable) irreducibility.

\begin{theorem}\label{thm:finitegeneratedinfinitedim}
 Let $V$ be a path variety such that $\mathcal{I}(V)$ is finitely generated. Then $V$ is infinite dimensional.

 If $V$ is irreducible, then it is countably irreducible.
\end{theorem}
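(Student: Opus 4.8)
The plan is to prove both statements by exhibiting, for a path variety $V$ with finitely generated ideal, an explicit infinite strictly increasing chain of nonempty sub path varieties inside $V$, and, for the irreducibility part, by a dimension/countability argument reducing to the fact that over $\R$ a variety cut out by finitely many polynomials has only finitely many irreducible components. First I would set up the finite generation: say $\mathcal{I}(V)=\mathscr{I}_{\shuffle}(x_1,\dots,x_r)$ with all $x_i\in T^{\geq 1}(\R^d)$ (if $\emptyword\in\mathcal{I}(V)$ then $V=\emptyset$, which by convention has dimension $0$, but then $\mathcal{I}(V)=T(\R^d)$ is still finitely generated only in the trivial sense; I would handle $V=\emptyset$ separately, or note the statement is about nonempty $V$). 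Let $N$ be the maximal weight occurring among the generators $x_i$. The key structural observation is that $\mathcal{I}(V)$ contains no nonzero element of weight strictly below the minimal weight of any generator, and more importantly that for each $k$ the truncation $\proj_{\leq k}\mathcal{I}(V)$ stabilizes the shuffle relations only finitely, so that the variety $V$ always contains paths distinguished by arbitrarily high-order signature coordinates.

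Concretely, the main step is: for each $m\in\mathbb{N}$ with $m>N$, pick a word $w_m$ of weight $m$ (or better, a coordinate of the first kind $c_m$ in $\proj_m T(\R^d)$) that is \emph{not} in $\mathcal{I}(V)$ — such a word exists because $\mathcal{I}(V)$, being generated in weights $\leq N$, cannot contain all of $\proj_m T(\R^d)$ once $\mathfrak{g}_m(\R^d)\neq 0$, and one checks that the shuffle ideal generated in low weight, when projected to weight $m$, has a proper image (this uses that the free Lie algebra has positive-dimensional graded pieces in every degree and that the shuffle-ideal generated by low-weight elements hits weight $m$ only through shuffles with lower Lie generators, which do not exhaust the space of coordinates of the first kind). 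Then $V_m:=V\cap\mathcal{V}(c_{N+1},\dots,c_m)$ gives a descending chain of path varieties, each strictly contained in the previous one since by the Lemma preceding this theorem (the polynomial map $q:\mathfrak{g}^n(\R^d)\to\varltwo{\R^d}$ realizing prescribed truncated log-signatures) one can build a path in $V_{m-1}$ with $\langle\sigma(X),c_m\rangle\neq 0$, i.e. not in $V_m$. Reversing this chain and appealing to Remark~\ref{rem:irred_chain}'s characterization — or directly to the definition of dimension via the fact that an infinite strictly decreasing chain of nonempty varieties forces dimension to exceed every $n$ — yields that $V$ is infinite dimensional. The delicate point that I would spend most care on is verifying that $V_m\subsetneq V_{m-1}$, i.e. that $V_{m-1}$ genuinely contains a path with nonvanishing $c_m$-coordinate: this requires knowing that the constraints defining $V_{m-1}$ (finitely many shuffle relations plus vanishing of $c_{N+1},\dots,c_{m-1}$) are compatible with $\langle\sigma(X),c_m\rangle\neq 0$, which follows from the surjectivity of $\proj_{\leq m}\log_{\conc}\circ\,\sigma$ onto $\mathfrak{g}^{\leq m}(\R^d)$ provided by the Chen–Chow-type lemma above, together with the observation that the finitely many generators $x_i$ impose conditions only in weights $\leq N$.

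For the second statement, suppose $V$ is irreducible (as an ordinary, not countably, irreducible variety) and suppose toward a contradiction that $V=\bigcup_{n\in\mathbb{N}}V_n$ with each $V_n\subsetneq V$. Here the plan is to push the finite generation down to finite-dimensional truncations: for each level $k$, the image $\sigma_{\leq k}(V):=\{\proj_{\leq k}\sigma(X)\mid X\in V\}$ sits inside the real affine variety $\widetilde{V}_k$ cut out in $T_{\leq k}(\R^d)$ by the finitely many polynomials coming from the generators $x_i$ together with the (finitely many, in each truncation) shuffle relations, and the Zariski closure of $V$ is recovered as the inverse limit of these. Since $\mathcal{I}(V)$ is finitely generated, $\widetilde{V}_k$ is a genuine affine real variety and hence has finitely many irreducible components; an irreducible $V$ must map dominantly into a single component at each level. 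If $V$ were a countable union of proper subvarieties, a Baire-category / dimension-counting argument at a sufficiently high truncation level $k$ (where the generators already "see" everything, $k\geq N$) would force one of the $V_n$ to already fill an irreducible component of $\widetilde{V}_k$, hence — by finite generation, so that the ideal is determined by its truncation — to equal $V$, a contradiction. The main obstacle here is making the passage between the infinite-dimensional variety and its finite-dimensional truncations rigorous: one must ensure that $V_n=V$ at some truncation actually lifts to $V_n=V$ globally, which is exactly where finite generation of $\mathcal{I}(V)$ is used (an infinitely generated ideal could distinguish $V$ from a proper subvariety only at arbitrarily high levels, and that is precisely the countably-reducible phenomenon the paper highlights).
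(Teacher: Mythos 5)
Your approach is genuinely different from the paper's, but as written it has a gap that keeps the dimension argument from closing.

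The paper reduces to the irreducible case via primary decomposition of the finitely generated ideal $\mathcal{I}(V)=I_1\cap\dots\cap I_n$ (so it suffices to show each $\mathcal{V}(I_i)$ is infinite dimensional), and in the irreducible case it uses Lemma~\ref{lem:biregular} together with the Chen--Chow-type map $q_k$: set $M:=q_k^{-1}(V)=\proj_{\leq k}\log_{\conc}(V)$, note $M$ is an irreducible point variety, and then build the \emph{increasing} chain $\overline{q_n(M)}\subsetneq\overline{q_{n+1}(M)}\subsetneq\cdots$ of sub path varieties. Lemma~\ref{lem:biregular} is what delivers that each $\overline{q_n(M)}$ is \emph{countably irreducible} (biregular to an irreducible point variety). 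Zariski-density of $\bigcup_n q_n(M)$ in $V$ then gives that $V$ is countably irreducible, and the chain gives infinite dimension. You instead build the \emph{decreasing} chain $V_m:=V\cap\mathcal{V}(c_{N+1},\dots,c_m)$ and appeal to Remark~\ref{rem:irred_chain}. That remark characterizes dimension via chains of \emph{countably irreducible} varieties; your $V_m$ are just path varieties, and you nowhere argue that they (or $V$ itself) are countably irreducible. Without that, the chain does not bound the dimension under the paper's definition, and indeed $V$ itself might a priori be a countable union of proper subvarieties, in which case ``infinite dimensional'' must be verified componentwise. This is exactly what the biregular lemma and the primary decomposition step supply in the paper, and it is the missing ingredient in your argument. (Your strictly-decreasing-chain construction is otherwise sound in spirit; note only that at stage $m$ you should pick $c_m$ outside $\mathcal{I}(V_{m-1})$ rather than outside $\mathcal{I}(V)$, which you do flag as the delicate point.)

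For the second claim your truncation/Baire-style sketch is heading in a workable direction, but it is not yet a proof: you need a precise statement that finite generation in weight $\leq k$ forces a subvariety of $V$ to equal $V$ as soon as their $k$-truncated log-signature images agree, and a reason why a countable cover $V=\bigcup_n V_n$ must collapse at some fixed truncation. The paper avoids Baire-type reasoning entirely and instead gets countable irreducibility for free from the density of $\bigcup_n q_n(M)$ together with the countable irreducibility of each $\overline{q_n(M)}$. I would recommend you either import Lemma~\ref{lem:biregular} (which gives you irreducibility correspondence in both directions) or supply a self-contained argument that the $V_m$ in your chain are countably irreducible and that the reducible case reduces to the irreducible one.
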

\begin{proof}
 Assume first $V$ is irreducible.
 Let $(x_i)_i$ be a finite family of elements generating $\mathcal{I}(V)$,
 and $k\in\mathbb{N}$ such that $x_i\in T_{\leq k}(\R^d)$ for all $i$.
 By Lemma \ref{lem:biregular},
 then $M:=q_k^{-1}(V)=\proj_{\leq k}\log_{\conc}(V)$ is irreducible too,
 and so $\overline{q_n(M)}$ is a strictly increasing sequence of countably irreducible subvarieties of $V$.
 Since furthermore $\bigcup_n q_n(M)$ is Zariski dense in $V$,
 this implies that $V$ itself is countably irreducible,
 and thus as a countably irreducible path variety countaining a strictly increasing infinite sequence of countably irreducible subvarieties,
 it is also infinite dimensional.

 If $V$ is reducible, since $\mathcal{I}(V)$ is finitely generated,
 we can just decompose it into again finitely generated primary ideals $I_1,\dots,I_n$ with $\mathcal{I}(V)=I_1\cap\dots\cap I_n$ like in classical algebraic geometry.
 Then $V=\bigcup_{i=1}^n\mathcal{V}(I_i)$
 where all irreducible $\mathcal{V}(I_i)$ are infinite dimensional,
 so $V$ is infinite dimensional.
\end{proof}

\begin{theorem}
 Let $M$ be a point variety of dimension $n$. Then $V^{\text{linear}}\cap \incin{M}$ has dimension $n$. If $M$ is non-empty, then $\incin{M}$ has infinite dimension.
\end{theorem}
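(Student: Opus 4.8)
The plan is to obtain both statements from the biregular criterion of Lemma~\ref{lem:biregular} and the remark following it, transferring the dimension of a point variety to the linear case and, for the infinite-dimensional part, reducing to the loop variety $\mathcal{V}(\word{1},\dots,\word{d})$ and invoking Theorem~\ref{thm:finitegeneratedinfinitedim}.

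For the first claim, recall from the opening theorem of Section~\ref{sec:topology} that $\incin{M}$ is a path variety, so $V:=V^{\text{linear}}\cap\incin{M}$, the path variety of linear paths with increment in $M$, is again a path variety. I would apply Lemma~\ref{lem:biregular} with this $M\subseteq\R^d$ and this $V\subseteq\varltwo{\R^d}$ (so $t=d$), taking $F\colon\R^d\to\varltwo{\R^d}$, $a\mapsto\ell_a$, the linear path from $0$ to $a$, and $G\colon\varltwo{\R^d}\to\R^d$, $X\mapsto X_T-X_0=\sum_{i=1}^d\langle\sigma(X),\word{i}\rangle e_i$. Here $G$ is regular by definition, and $F$ is regular because $(Kw)(a):=\langle\sigma(\ell_a),w\rangle$ is polynomial in $a$ while $K\colon(T(\R^d),\shuffle)\to\R[x_1,\dots,x_d]$ is an algebra homomorphism by Ree's identity evaluated at $\ell_a$. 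One checks $G(F(a))=a$ for all $a$, $F(M)\subseteq V$, $G(V)=M$, and that for $X\in V$ the path $F(G(X))$ is the linear path with the same increment as $X$, hence tree-like equivalent to $X$ (concatenating $X$ with the reversal retraces a single segment). Lemma~\ref{lem:biregular} then gives $\dim\bigl(V^{\text{linear}}\cap\incin{M}\bigr)=\dim M=n$; this is the dimension-level refinement of the correspondence underlying Proposition~\ref{prop:suboflinear}.

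For the second claim, assume $M\neq\emptyset$ and fix $a\in M$. Then $\incin{\{a\}}$ is a sub path variety of $\incin{M}$ --- a path variety since $\{a\}$ is a point variety, and contained in $\incin{M}$. I would compare it with $\incin{\{0\}}=\mathcal{V}(\word{1},\dots,\word{d})$ using the regular maps $F(X)=X\sqcup\ell_a$ and $G(X)=X\sqcup\overleftarrow{\ell_a}$ (concatenation with a fixed path is a regular map, Section~\ref{sec:morphisms}): $F$ sends $\incin{\{0\}}$ into $\incin{\{a\}}$, $G$ sends $\incin{\{a\}}$ into $\incin{\{0\}}$, and both $G(F(X))=X\sqcup\ell_a\sqcup\overleftarrow{\ell_a}$ and $F(G(X))=X\sqcup\overleftarrow{\ell_a}\sqcup\ell_a$ are tree-like equivalent to $X$, since $\ell_a\sqcup\overleftarrow{\ell_a}$ has trivial signature. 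By the remark after Lemma~\ref{lem:biregular}, $\incin{\{a\}}$ and $\incin{\{0\}}$ share their dimension; and $\incin{\{0\}}$ is the loop variety, which is infinite dimensional (as noted in Section~\ref{sec:topology}, and as follows from Theorem~\ref{thm:finitegeneratedinfinitedim} since its shuffle ideal is finitely generated). Hence $\incin{\{a\}}$ is infinite dimensional, and since a sub path variety of a finite-dimensional path variety is again finite-dimensional of no larger dimension --- a routine induction on the recursive definition of dimension, using that the only nonempty sub path variety of an $\overline{\{X\}}$ is $\overline{\{X\}}$ itself --- the inclusion $\incin{\{a\}}\subseteq\incin{M}$ forces $\incin{M}$ to be infinite dimensional.

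The argument is essentially formal once Lemma~\ref{lem:biregular} and the infinite-dimensionality of $\mathcal{V}(\word{1},\dots,\word{d})$ are in hand; the points to handle with care are the verifications that $F$ and $G$ are regular in the precise sense of the definitions, that the displayed compositions are tree-like equivalent to the identity on the relevant varieties, and the small induction showing that dimension cannot increase under passage to sub path varieties. I do not anticipate any genuinely hard obstacle beyond what is already contained in Theorem~\ref{thm:finitegeneratedinfinitedim}.
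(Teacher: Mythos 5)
Your proof is correct, but it reaches both halves by a different route than the paper. For the first claim the paper does not invoke Lemma~\ref{lem:biregular} at all: it quotes Proposition~\ref{prop:suboflinear} to get that the countably irreducible sub path varieties of $V^{\text{linear}}\cap\incin{M}$ are exactly the sets $V^{\text{linear}}\cap\incin{N}$ with $N\subseteq M$ an irreducible sub point variety, and reads off the equality of dimensions from that correspondence; your explicit biregular pair $a\mapsto\ell_a$, $X\mapsto X_T-X_0$ packages the same correspondence through Lemma~\ref{lem:biregular}, which is a legitimate and arguably cleaner alternative since that lemma's proof already contains the bookkeeping. The more substantial divergence is in the second claim: the paper applies Theorem~\ref{thm:finitegeneratedinfinitedim} directly to $\incin{M}$, using that $\mathcal{I}(\incin{M})$ is finitely generated by the $\varphi(p_i)$ for a finite generating set $(p_i)_i$ of the ideal of $M$ (Hilbert's basis theorem), whereas you pass to the subvariety $\incin{\{a\}}$, identify it up to biregular equivalence (concatenation with $\ell_a$) with the loop variety $\mathcal{V}(\word{1},\dots,\word{d})$, and then need the extra monotonicity fact that a path variety containing an infinite-dimensional sub path variety is itself infinite dimensional. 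That fact is nowhere stated in the paper; it does hold under the recursive definition of dimension (a countably irreducible subvariety of a countable union must lie in one constituent, and proper subvarieties of a finite-dimensional countably irreducible variety are by definition assigned strictly smaller dimension), and your sketch is essentially this, but it is a genuine additional obligation your route incurs and should be written out. What your route buys in exchange is that you only need the finitely-generated-ideal input for the single loop variety, which the paper asserts explicitly in Section~\ref{sec:topology}, rather than the blanket claim, used without further justification in the paper's proof, that $\mathcal{I}(\incin{M})$ is generated by the $\varphi(p_i)$ for a general point variety $M$.
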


\begin{proof}
 By Proposition \ref{prop:suboflinear},
  $V'$ is a countably irreducible sub path variety of
  $V^{\text{linear}}\cap \incin{M}$ if and only if there is an irreducible sub point variety $N$ of $M$ with $V'=V^{\text{linear}}\cap \incin{M}$.

  From this, we may conclude that the dimensions of $V^{\text{linear}}\cap \incin{M}$ and $M$ agree.

  If $M$ is non-empty, then $\mathcal{I}(\incin{M})$ is finitely generated by $\phi(p_i)$,
  where $(p_i)_i$ is a finite family generating the ideal corresponding to $M$.
  So by Theorem \ref{thm:finitegeneratedinfinitedim},
  this means that $\incin{M}$ is infinite dimensional.
\end{proof}

\begin{theorem}
 Let $0\in M\subset\R^{d+1}$ be a finite union of rotated and translated graphs of polynomial maps $\mathbb{R}\to\mathbb{R}^d$. Then $\pathsin{M}$ is one dimensional.
\end{theorem}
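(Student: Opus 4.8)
The plan is to realize $\pathsin{M}$ as a path variety, partition it into countably many pieces indexed by the combinatorial route a reduced path takes through the curves composing $M$, and show via Lemma~\ref{lem:biregular} that each piece is countably irreducible of dimension one, so that $\pathsin{M}$ is one-dimensional by definition.

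First I would observe that $M$ is a real algebraic subset of $\R^{d+1}$: the graph $\{(t,q(t)):t\in\R\}$ of a polynomial map $q:\R\to\R^d$ is the zero set of $x_1-q_1(x_0),\dots,x_d-q_d(x_0)$, being a zero set is preserved under rotations and translations, and finite unions of zero sets are zero sets. As $0\in M$ we may take defining polynomials $p_i$ with $p_i(0)=0$, so Corollary~\ref{cor:pathsinvariety} shows that $\pathsin{M}$ is a path variety, consisting precisely of those $X$ whose reduced path $\check X$, started at $0$, stays in $M$.

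Next I would analyse reduced paths lying in $M$. Write $M=M_1\cup\dots\cup M_k$ with each $M_j=\im c_j$ the affine image of a graph parametrization $t\mapsto(t,q(t))$; then each $c_j$ is injective and $c_j^{-1}$ is the restriction to $M_j$ of an affine functional. Let $S$ be the finite set consisting of $0$ together with all points of $M_j\cap M_{j'}$, $j\ne j'$ (finite, since distinct irreducible algebraic curves meet in a finite set). A reduced path has no tree-like sub-path on any sub-interval (otherwise $t\mapsto\sigma(\check X)_t$ would repeat a value), so $\check X$ never backtracks along any $M_j$ and hence decomposes as a finite concatenation $A_1\sqcup\dots\sqcup A_r$ of monotone arcs, $A_i$ running along some $M_{j_i}$, with $j_i\ne j_{i+1}$, consecutive arcs meeting at a point $p_i\in S$, and $p_0=0$; $r$ is finite because the partition by the arc endpoints gives $\|\check X\|_{p\text{-var}}^p\ge\sum_{i=1}^{r-1}\|p_i-p_{i-1}\|^p\ge(r-1)\delta^p$ with $\delta=\min\{\|s-s'\|:s\ne s'\in S\}>0$. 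For a fixed type $\tau=(r;j_1,\dots,j_r;p_1,\dots,p_{r-1})$ — of which there are only countably many — the arcs $A_1,\dots,A_{r-1}$ are uniquely determined (the monotone arc joining two points of a curve $\cong\R$ is unique up to reparametrization), while $A_r$ is the monotone arc along $M_{j_r}$ from $p_{r-1}$ to a free endpoint. Thus $\pathsin{M}=\bigcup_\tau\overline{V_\tau}$, where $V_\tau:=\{X:\check X\text{ has type }\tau\}$ and $\overline{V_\tau}$ is its Zariski closure, which lies in $\pathsin{M}$ since $V_\tau\subseteq\pathsin{M}$ and $\pathsin{M}$ is closed.

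Finally I would bound $\dim\overline{V_\tau}$ for each $\tau$ with $V_\tau\ne\emptyset$. With $s_0=c_{j_r}^{-1}(p_{r-1})$, define $F_\tau:\R^1\to\varltwo{\R^{d+1}}$ by concatenating the fixed arcs $A_1,\dots,A_{r-1}$ with $u\mapsto c_{j_r}((1-u)s_0+us)$, $u\in[0,1]$; the iterated integrals of a polynomially parametrized path are polynomial in $s$ and $w\mapsto\langle\sigma(F_\tau(s)),w\rangle$ is an algebra homomorphism $(T(\R^{d+1}),\shuffle)\to\R[s]$, so $F_\tau$ is a regular map. Define $G_\tau:\varltwo{\R^{d+1}}\to\R^1$ by $G_\tau(X)=c_{j_r}^{-1}(\text{increment of }X)$, which is affine in the increment, hence a regular function. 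Then $G_\tau\circ F_\tau=\id_{\R^1}$, and for $X\in V_\tau$ the path $F_\tau(G_\tau(X))$ has the same arcs and the same terminal endpoint as $\check X$, hence the same signature, hence is tree-like equivalent to $X$; since the composite $F_\tau\circ G_\tau$ is again a regular map, the set $\{X:F_\tau(G_\tau(X))\text{ is tree-like equivalent to }X\}=\{X:\sigma(F_\tau(G_\tau(X)))=\sigma(X)\}$ is a path variety, so this holds throughout $\overline{V_\tau}$. As at least one reduced path of type $\tau$ exists, say with terminal parameter $s^{*}$, every $s$ strictly between $s_0$ and $s^{*}$ is admissible as well (shortening the terminal arc keeps the path reduced, of type $\tau$, and inside $M$), so the admissible parameters form an infinite, hence Zariski-dense, subset of $\R^1$; therefore $F_\tau(\R^1)\subseteq\overline{V_\tau}$ and $G_\tau(\overline{V_\tau})\subseteq\R^1$, and Lemma~\ref{lem:biregular}, applied with the irreducible one-dimensional point variety $\R^1$, gives that $\overline{V_\tau}$ is countably irreducible of dimension one. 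Since $0\in M$ makes some $V_\tau$ nonempty (the single-arc route along a curve through $0$), $\pathsin{M}$ is a countable union of countably irreducible dimension-one path varieties and is therefore one-dimensional. The hard part is the structural claim that a reduced path in $M$ breaks into finitely many monotone arcs meeting at $S$ — in particular, excluding an accumulation of curve changes — which is exactly where finite $p$-variation and the absence of tree-like sub-paths have to be combined.
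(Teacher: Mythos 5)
Your proposal is correct and follows essentially the same route as the paper: decompose $\pathsin{M}$ into countably many pieces indexed by finite routes through the finitely many intersection points, and obtain dimension one from Lemma~\ref{lem:biregular} via the pair (polynomially parametrized endpoint map, affine coordinate of the increment). The only differences are organizational — you apply Lemma~\ref{lem:biregular} directly to each route-piece $\overline{V_\tau}$ (with the fixed prefix arcs built into $F_\tau$), whereas the paper applies it only to a single curve and then concatenates fixed connecting arcs using Theorem~\ref{thm:path_conc_variety} — and you supply details the paper leaves implicit, namely the $p$-variation bound forcing finitely many curve switches and the Zariski-density argument extending the tree-like-equivalence identity to the closure.
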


\begin{proof}
 First say $M\subset \R^{d+1}$ is a single rotated graph of a polynomial map $p:\,\R\to\R^d$,
 i.e.\ there is $R\in\mathrm{SO}_{d+1}$ such that
 $M=q(\R)$ for $q(r):=R(r,p(r))$.
 Then let $Q(r)$ denote the unique arc-length parametrized injective path from $0$ to $q(r)$ through $M$,
 and $R(X):=(R^{-1}(X_T-X_0))^{[1]}$.
 Then $Q,R$ are regular maps which satisfy the assumptions of Lemma \ref{lem:biregular}
 with $\pathsin{M}=\overline{Q(\R)}$,
 and thus $\pathsin{M}$ is one dimensional.

 Now let $M\ni 0$ be a finite union of rotated and translated graphs, wlog connected.
 Let $a_1,\dots,a_n$ be the intersection points of the individual graphs $(N_i)_i$
 (as the $N_i$ are finitely many algebraic curves,
 there only exist finitely many intersection points).
 Then we can write
 \begin{equation*}
  \pathsin{M}=\bigcup_{(b_1\dots b_k)\in S,\,N\in\{N_i,i\}\text{ containing }b_k}X(b_1,b_2)\sqcup\dots\sqcup X(b_{k-1},b_k)\sqcup\pathsin{N}
 \end{equation*}
 where $S$ is the countable set of finite sequences $(b_1,\dots,b_k)$ of $b_i\in\{0,a_1,\dots,a_n\}$
 such that for any $(b_i,b_{i+1})$, $i=1,\dots,k-1$, there
 is a unique injective arc-length parametrized path $X(b_i,b_{i+1})$ through $M$ with start point $b_i$ and end point $b_{i+1}$
 such that $\{0,a_1,\dots,a_n\}\cap X(b_i,b_{i+1}){\restriction_{(0,\ell)}}=\emptyset$ where $\ell$ is the length of $X(b_i,b_{i+1})$.
 So $\pathsin{M}$ is a countable union of one dimensional varieties,
 and thus one dimensional.
\end{proof}

\begin{conjecture}
 Let $M\subset\R^d$ be a finite union of algebraic curves. Then $\pathsin{M}$ is one dimensional.
\end{conjecture}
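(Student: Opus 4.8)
The plan is to adapt the proof of the previous theorem: stratify $M$, reduce a general reduced path in $M$ to a concatenation of monotone arc-traversals, and present $\pathsin{M}$ as a countable union of one-dimensional path varieties. Write $M=\bigcup_i C_i$ with $C_i$ irreducible algebraic curves, let $B\subseteq M$ be the finite set consisting of $0$, the singular points of the $C_i$ (nodes, cusps, $\dots$), and the pairwise intersection points of distinct $C_i$, and let $\{e_\alpha\}$ be the finitely many connected components of $M\setminus B$; each $e_\alpha$ is an embedded, genuinely smooth real-analytic arc whose closure lies in a single $C_i$. By Corollary~\ref{cor:pathsinvariety} the reduced path $\check X$ of any $X\in\pathsin{M}$ lies in $M$ and starts at $0$; since $\check X$ is reduced it moves monotonically on each smooth arc and at a point of $B$ can only switch arcs without immediately retracing, so it decomposes as $X(b_1,b_2)\sqcup\dots\sqcup X(b_{k-1},b_k)\sqcup P_e(r)$, where the $b_j\in B$, each $X(b_j,b_{j+1})$ is the arc-length parametrized monotone traversal of one arc joining consecutive points of $B$, and $P_e(r)$ is a partial monotone traversal, of arc length $r$, of one further arc $e$ starting at its endpoint $b_k\in B$. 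Paths whose reduced path meets $B$ infinitely often are limits in $p$-variation of such ``finite-type'' paths, so since path varieties are analytically closed they lie in the Zariski closure of the finite-type ones; hence $\pathsin{M}=\overline{\bigcup_\tau W_\tau}$, where $\tau=(b_1,\dots,b_k;e)$ runs over the \emph{countably} many types and $W_\tau$ is the set of reduced paths of type $\tau$. By the observation preceding Theorem~\ref{thm:path_conc_variety} that $Z\mapsto Z_\tau\sqcup Z$ is a Zariski homeomorphism, $\overline{W_\tau}=Z_\tau\sqcup\overline{\mathcal P_e}$ for a fixed path $Z_\tau$ and carries the same dimension as $\overline{\mathcal P_e}$, where $\mathcal P_e=\{P_e(r):r\in[0,\ell_e)\}$.

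The substantive point is then that $\overline{\mathcal P_e}$ is one-dimensional. It is not zero-dimensional, since the $P_e(r)$ are pairwise tree-like-inequivalent and uncountably many. For the remaining conditions the key observation is that $r\mapsto\sigma(P_e(r))$ is real-analytic: the arc-length parametrization of a smooth arc of a real algebraic curve is real-analytic, and by the definition of the signature each component $\langle\sigma(P_e(r)),w\rangle$ is an iterated integral of a product of real-analytic functions with respect to the upper limit $r$, hence again real-analytic. Note that this one-parameter family is genuinely \emph{not} a regular map from $\R$ in the sense of Section~\ref{sec:morphisms} --- already for a circular arc the component $\langle\sigma(P_e(r)),\word 2\rangle$ is a sine, not a polynomial in $r$ --- so Lemma~\ref{lem:biregular} does not apply verbatim and the softer analytic argument is needed. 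Consequently, if $V\subsetneq\overline{\mathcal P_e}$ is a proper sub path variety, choose $x\in\mathcal I(V)\setminus\mathcal I(\overline{\mathcal P_e})$; then $r\mapsto\langle\sigma(P_e(r)),x\rangle$ is real-analytic on $[0,\ell_e)$ and not identically zero, so by the identity theorem its zero set, and hence $\{\,r:P_e(r)\in V\,\}$, is countable. Provided the reduced paths of $\overline{\mathcal P_e}$ are exactly the $P_e(r)$ up to tree-like equivalence, this forces $\dim V=0$; the same countability then prevents $\overline{\mathcal P_e}$ from being a countable union of proper subvarieties, so it is countably irreducible of dimension one, and $\pathsin{M}$ is a countable union of one-dimensional path varieties, hence one-dimensional.

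The hard part will be the proviso: showing that the Zariski closure of the analytic one-parameter family $\mathcal P_e$ does not acquire extraneous reduced paths. One always has $\overline{\mathcal P_e}\subseteq\pathsin{D}$ (up to translation of the start point) for $D$ the algebraic curve containing $e$, so the danger is a reduced path that runs along $e$ and then \emph{branches off} at a singular point of $D$ onto a different local branch; such a path would in general contribute a further uncountable family of reduced paths and could raise the dimension. When $e$ extends analytically to a smooth closed loop --- a circle being the model case --- one has the harmless identity $\overline{\mathcal P_e}=\pathsin{D}$ with $D$ smooth and the conclusion survives; the genuine obstacle is singular $D$. I would attack it by a local analysis at each singular point $p$ of $D$: the ``direction of the signature'' of a branch-jumping path at $p$ differs from that of every monotone traversal of $e$'s analytic continuation, and I would try to produce, from the local Puiseux parametrizations of the branches at $p$, an explicit regular function (a combination of signature components, in the spirit of the identity $\langle\sigma,\word 1\word 2-\word 2\word 1+\word 2\rangle=r$ that recovers the arc length along a unit circle) vanishing on all of $\mathcal P_e$ but not on the offending path. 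Making this uniform over all branches and singular points, together with the separately routine but technical handling of the infinitely oscillating paths in the decomposition step, is where the real work lies, and is presumably why the statement is only a conjecture; it is closely related to the question raised after Corollary~\ref{cor:pathsinvariety} of whether the set of all paths whose reduced path lies in a given point variety, irrespective of start point, is a path variety.
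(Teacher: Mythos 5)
This statement is a \emph{conjecture} in the paper, so there is no proof of it to compare against: the paper proves the analogous result only for finite unions of rotated and translated graphs of polynomial maps, precisely because in that case the arc-length parametrization $r\mapsto Q(r)$ is a \emph{regular} map and Lemma~\ref{lem:biregular} applies. Your decomposition of $\pathsin{M}$ into a countable union indexed by combinatorial types $\tau=(b_1,\dots,b_k;e)$ follows the same skeleton as the proof of that preceding theorem, and you correctly observe that for a general algebraic curve the parametrization is only real-analytic, not regular, so the biregular lemma breaks down. Your substitute — that $r\mapsto\langle\sigma(P_e(r)),x\rangle$ is real-analytic, hence any proper sub path variety of $\overline{\mathcal P_e}$ meets the uncountable family $\mathcal P_e$ in only countably many reduced paths, forcing countable irreducibility and dimension one — is a sound argument \emph{provided} the reduced paths in $\overline{\mathcal P_e}$ are exactly the $P_e(r)$ up to tree-like equivalence. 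You correctly identify that proviso as the open point, and indeed it is closely tied to the question after Corollary~\ref{cor:pathsinvariety} about $\pathsin{M}$ with a free start point.

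Two remarks. First, the appeal to $p$-variation limits for reduced paths meeting $B$ infinitely often is unnecessary: since a reduced path cannot reverse direction in the interior of a smooth arc of $M$ (that would create a tree-like excursion), each non-terminal passage along an arc is a full monotone traversal contributing at least the fixed arc length of that segment to the total variation, and as there are only finitely many arcs, every reduced path in $\pathsin{M}$ meets $B$ only finitely often; the union over types is already exact, as in the paper's proof. Second, the proviso is not a technicality but the genuine obstruction. A priori $\overline{\mathcal P_e}$ could contain a reduced path that travels along $e$ and then, at a singular point of the ambient curve $D$, departs along a different local branch while still annihilating every element of $\mathcal{I}(\mathcal P_e)$; ruling this out requires exhibiting, for each singular point and pair of local branches, a signature-polynomial in $\mathcal{I}(\mathcal P_e)$ that separates them, which is exactly where a new idea is needed and presumably why the statement remains a conjecture. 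So you have accurately located the obstruction, but the argument as written does not remove it and therefore does not constitute a proof.
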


\section{Morphisms}
\label{sec:morphisms}

In principle, there is a simple sufficient criterion for a map to be continuous.

\begin{proposition}\label{prop:regfunc}
 Let $F:\varltwo{\R^d}\to\varltwo{\R^t}$ be such that there is a homomorphism $H:(T(\R^t),\shuffle)\to(T(\R^d),\shuffle)$ such that
 \begin{equation*}
  \langle\sigma(F(X)),x\rangle=\langle\sigma(X),H x\rangle
 \end{equation*}
 for all $x\in T(\R^t)$ and all $X\in\varltwo{\R^d}$.
 Then $F$ is Zariski continuous.

\end{proposition}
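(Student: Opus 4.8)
The plan is to reduce Zariski continuity of $F$ to a one-line computation. By the definition of the path Zariski topology fixed in Section~\ref{sec:topology}, the closed subsets of $\varltwo{\R^t}$ are exactly the path varieties $\mathcal{V}(W)$ with $W\subseteq T(\R^t)$, so it suffices to check that $F^{-1}(\mathcal{V}(W))$ is a path variety in $\varltwo{\R^d}$ for every such $W$.

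To do so I would simply unwind the definitions together with the intertwining hypothesis. For $X\in\varltwo{\R^d}$, membership $X\in F^{-1}(\mathcal{V}(W))$ means $F(X)\in\mathcal{V}(W)$, i.e.\ $\langle\sigma(F(X)),x\rangle=0$ for all $x\in W$; applying the hypothesis $\langle\sigma(F(X)),x\rangle=\langle\sigma(X),Hx\rangle$, this is equivalent to $\langle\sigma(X),Hx\rangle=0$ for all $x\in W$. Setting $HW:=\{Hx\mid x\in W\}$, which is indeed a subset of $T(\R^d)$ since $H$ takes values there, this is precisely the condition $X\in\mathcal{V}(HW)$. Hence $F^{-1}(\mathcal{V}(W))=\mathcal{V}(HW)$ is a path variety, and $F$ is Zariski continuous.

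I do not expect a genuine obstacle here; the only points requiring a moment of care are bookkeeping ones: that the pairing $\langle\sigma(F(X)),x\rangle$ is the pairing of $T((\R^t))$ with the uncompleted tensor algebra $T(\R^t)$, so that $H$ is applied to a genuine element of $T(\R^t)$ and lands in $T(\R^d)$, and that we are invoking the identification of closed sets with path varieties. It is worth noting — and I would record this as a remark — that the shuffle-homomorphism property of $H$ is not used anywhere in the continuity proof; only well-definedness (and linearity) of $H$ as a map $T(\R^t)\to T(\R^d)$ enters. The homomorphism hypothesis is the natural one because it is exactly what makes the intertwining identity consistent with grouplikeness of signatures: if $H(x\shuffle y)=Hx\shuffle Hy$ then $\langle\sigma(X),H(x\shuffle y)\rangle=\langle\sigma(X),Hx\rangle\langle\sigma(X),Hy\rangle$, reproducing the shuffle identity for $\sigma(F(X))$. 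But this plays no role in proving continuity, and indeed the same computation shows more: $F^{-1}$ maps the generating set $W$ of a variety to the generating set $HW$ of its preimage, which is what is needed later when computing preimage ideals for the concrete morphisms $M_p$ and $\Lambda_B$.
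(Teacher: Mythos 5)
Your proof is correct, and it is exactly the immediate argument the paper relies on: the paper states this proposition without a written proof, since $F^{-1}(\mathcal{V}(W))=\mathcal{V}(HW)$ follows directly from unwinding the definitions as you do. Your side remark that only linearity (not the shuffle-homomorphism property) of $H$ is needed for continuity is also accurate and is consistent with the paper's subsequent proposition, which shows that any linear $H$ satisfying the intertwining identity is automatically a shuffle homomorphism.
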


This condition is not necessary.
As in classical algebraic geometry,
there are non-polynomial maps which are Zariski continuous.
Indeed, take e.g.\ the map $X\mapsto f(\langle\sigma(X),\word{1}\rangle)\cdot Y$ for any bijective $f:\R\to\R$ and any non-constant $Y\in\varltwo{\R^d}$.
This map is always Zariski continuous, as the preimage of any $\overline{\{Z\}}$ in the target space is either empty or all $X$ with $\langle\sigma(X),\word{1}\rangle=r$ for some $r\in\R$.

In analogy to classical algebraic geometry (cf.\ e.g.\ \cite[Theorem~1.4.8.]{netzer}),
we use the sufficient criterion above actually as a definition for the stronger than Zariski continuity notion of a regular map between path spaces.
We'll leave it to future work to indroduce regular maps between arbitrary (affine) path varieties,
but this will follow the same principles.
\begin{definition}
 We call $F:\varltwo{\R^d}\to\varltwo{\R^t}$ a regular map if it satisfies the condition of Proposition \ref{prop:regfunc}.
\end{definition}

What is the challenge is to understand which shuffle homomorphisms $H$ are such that the transpose $H^\top: T((\R^d))\to T((\R^t))$ reduces to a map from the signatures of $\varltwo{\R^d}$ paths to the signatures of $\varltwo{\R^t}$ paths.

Thankfully, the condition we just defined for maps to be regular map reduces to any adjoint linear map $H$ existing,
which will then automatically be a shuffle homomorphism.
\begin{proposition}
 Let $F:\varltwo{\R^d}\to\varltwo{\R^t}$ be such that there is a linear map $H:T(\R^t)\to T(\R^d)$ such that
 \begin{equation*}
  \langle\sigma(F(X)),x\rangle=\langle\sigma(X),H x\rangle
 \end{equation*}
 for all $x\in T(\R^t)$ and all $X\in\varltwo{\R^d}$.
 Then $H$ is a shuffle homomorphism and $F$ is a regular map.
\end{proposition}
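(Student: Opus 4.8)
The plan is to show that the existence of a linear adjoint $H$ forces $H$ to be a shuffle homomorphism, purely by testing the required identity against the rich supply of signatures. The key point is that although $\sigma$ is far from surjective onto the grouplike elements, its image is still large enough to separate linear functionals on any fixed truncation level: this is exactly the content of the Chen--Chow theorem quoted above, which says $\proj_{\leq k}\mathcal{G}_d=\proj_{\leq k}\sigma(\varltwo{\R^d})$ for every $k$. So any linear identity in the signature that we can verify "modulo what signatures see" will in fact be a genuine identity on the tensor algebra.

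First I would record that for every grouplike $g\in\mathcal{G}_d$ and every $x,y\in T(\R^t)$ we have
\begin{equation*}
 \langle g, Hx\shuffle Hy\rangle = \langle g,Hx\rangle\langle g,Hy\rangle,
\end{equation*}
because this holds with $g=\sigma(X)$ by Ree's shuffle identity applied to $Hx,Hy\in T(\R^d)$, and every grouplike element at each truncation level is such a $\sigma(X)$ by Chen--Chow. On the other hand, still for $g=\sigma(X)$,
\begin{equation*}
 \langle g, H(x\shuffle y)\rangle = \langle \sigma(F(X)), x\shuffle y\rangle = \langle\sigma(F(X)),x\rangle\langle\sigma(F(X)),y\rangle = \langle g,Hx\rangle\langle g,Hy\rangle,
\end{equation*}
using the defining property of $H$ together with Ree's shuffle identity in $T(\R^t)$. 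Combining the two displays gives $\langle\sigma(X),H(x\shuffle y)\rangle=\langle\sigma(X),Hx\shuffle Hy\rangle$ for all $X\in\varltwo{\R^d}$, i.e.\ $H(x\shuffle y)-Hx\shuffle Hy\in\mathcal{I}(\varltwo{\R^d})$. But $\mathcal{I}(\varltwo{\R^d})=\{0\}$: a tensor of homogeneous degree $n$ lying in this ideal would vanish on all $n$-level truncated signatures, hence on all of $\proj_{\leq n}\mathcal{G}_d$ by Chen--Chow, hence be $0$. Since $x\shuffle y$ is a finite sum of homogeneous pieces, $H(x\shuffle y)=Hx\shuffle Hy$, so $H$ is a shuffle homomorphism. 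Then $F$ satisfies the hypothesis of Proposition \ref{prop:regfunc} and is therefore a regular map.

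The main obstacle, and the one place one has to be slightly careful, is justifying that $\mathcal{I}(\varltwo{\R^d})=\{0\}$, i.e.\ that signatures of finite-$p$-variation paths separate points of $T(\R^d)$; this is where Chen--Chow is genuinely used, and one should note that $H(x\shuffle y)-Hx\shuffle Hy$ is a \emph{finite} linear combination of words so only finitely many truncation levels are involved, making the reduction to truncated grouplike elements legitimate. Everything else is a routine bookkeeping of the shuffle identity on both sides.
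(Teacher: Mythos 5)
Your argument is correct and follows essentially the same route as the paper: the identical chain of equalities (defining property of $H$, shuffle identity in $T(\R^t)$, shuffle identity in $T(\R^d)$) reduces everything to the fact that signatures of $\varltwo{\R^d}$ paths separate points of $T(\R^d)$. The only difference is that the paper outsources this separation step to \cite[Lemma~5.2]{DLPR20}, whereas you sketch it via Chen--Chow; note that your final ``hence be $0$'' still implicitly uses that the truncated grouplike elements $\proj_{\leq n}\mathcal{G}_d$ linearly span $T_{\leq n}(\R^d)$, which is exactly the content the cited lemma packages.
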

\begin{proof}
 Let $x_1,x_2\in T(\R^t)$.
 Then, by the shuffle relation,
 \begin{align*}
  \langle\sigma(X),H(x_1\shuffle x_2)\rangle&=\langle\sigma(F(X)),x_1\shuffle x_2\rangle
  =\langle\sigma(F(X)),x_1\rangle\langle\sigma(F(X),x_2\rangle\\&=\langle\sigma(X),H x_1\rangle\langle\sigma(X), H x_2\rangle=\langle\sigma(X),H x_1\shuffle H x_2\rangle
 \end{align*}
 for all $X\in\varltwo{\R^d}$.
 By \cite[Lemma~5.2]{DLPR20}, this implies $H x_1\shuffle H x_2=H(x_1\shuffle x_2)$.
\end{proof}

\begin{definition}
 $A:\varltwo{\R^d}\to \mathcal{X}$, $\mathcal{X}$ a vector space, is called a regular map if
 $A$ can be expressed as
 \begin{equation*}
  A(X)=\sum_{j=1}^r \langle\sigma(X),x_j\rangle v_j
 \end{equation*}
 for some $x_j\in T(\R^d)$ and some $v_j\in\mathcal{X}$.

 $D:\mathcal{X}\to\varltwo{\R^d}$ is called a regular map if there is an algebra homomorphism $K:\,(T(\R^d),\shuffle)\to\R[\mathcal{X}]$ such that
 \begin{equation*}
  \langle\sigma(D(v)),w\rangle=(Kw)(v)
 \end{equation*}
 for all $v\in\mathcal{X}$ and all $w\in T(\R^d)$.
\end{definition}
Note that in particular,
a regular map $f:\varltwo{\R^d}\to\R$
is of the form $f(X)=\langle\sigma(X),x\rangle$
for some $x\in T(\R^d)$,
and we call the regular maps $\varltwo{\R^d}\to\R$ regular functions.
Write $\R[\varltwo{\R^d}]$ for the ring of regular functions on $\varltwo{\R^d}$,
which is then canonically isomorphic to $(T(\R^d),\shuffle)$.

We end with the following justification of our definition of regular maps.
\begin{proposition}
 A map $F:\,\varltwo{\R^d}\to\varltwo{\R^t}$
 is a regular map
 if and only if for any $g\in\R[\varltwo{\R^t}]$,
 we have $F\circ g\in\R[\varltwo{\R^d}]$.
\end{proposition}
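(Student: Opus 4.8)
The plan is to use the identification $\R[\varltwo{\R^d}]\cong(T(\R^d),\shuffle)$ recorded just above, under which a regular function is a coordinate functional $X\mapsto\langle\sigma(X),x\rangle$, and to read off the adjoint map $H$ from the pullback of coordinate functions on the target. (I read the composition in the statement as $g\circ F$, which is the only one that typechecks, since $g$ is a function on $\varltwo{\R^t}$ and $F$ maps into $\varltwo{\R^t}$.)

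For the "only if" direction I would argue directly. If $F$ is regular with associated shuffle homomorphism $H:T(\R^t)\to T(\R^d)$, then for $g\in\R[\varltwo{\R^t}]$, say $g=\langle\sigma(\cdot),x\rangle$ with $x\in T(\R^t)$, one has $(g\circ F)(X)=\langle\sigma(F(X)),x\rangle=\langle\sigma(X),Hx\rangle$, which is precisely the regular function on $\varltwo{\R^d}$ attached to $Hx\in T(\R^d)$. This is immediate from the definition of a regular map.

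For the "if" direction I would construct $H$ by hand. For each $x\in T(\R^t)$ let $g_x\in\R[\varltwo{\R^t}]$ be the coordinate function $g_x(Y):=\langle\sigma(Y),x\rangle$. By hypothesis $g_x\circ F\in\R[\varltwo{\R^d}]$, so there is an element $Hx\in T(\R^d)$ with $(g_x\circ F)(X)=\langle\sigma(X),Hx\rangle$ for all $X\in\varltwo{\R^d}$; this $Hx$ is unique because the coordinate functionals $X\mapsto\langle\sigma(X),w\rangle$, $w$ a word, are linearly independent on $\varltwo{\R^d}$. Since $x\mapsto g_x$ is linear and $Hx$ is characterised uniquely by an identity that is linear in $x$, the resulting map $H:T(\R^t)\to T(\R^d)$ is linear, and by construction $\langle\sigma(F(X)),x\rangle=\langle\sigma(X),Hx\rangle$ for all $x$ and $X$. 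Then the Proposition immediately preceding this one applies and yields that $H$ is a shuffle homomorphism and $F$ is regular, as wanted.

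The only point that is not pure bookkeeping is the well-definedness of $H$, that is, that distinct elements of $T(\R^d)$ yield distinct regular functions on $\varltwo{\R^d}$. This is the faithfulness of the signature's coordinate functionals — exactly the injectivity built into the canonical isomorphism $\R[\varltwo{\R^d}]\cong(T(\R^d),\shuffle)$ stated above — and it ultimately rests on Chen--Chow, since piecewise-linear paths already realise every truncated grouplike element. I expect this to be the one place where something slightly beyond formal manipulation is invoked; everything else is a direct unwinding of definitions.
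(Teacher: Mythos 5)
Your proof is correct and follows essentially the same route as the paper: the ``only if'' direction is immediate from the definition, and the ``if'' direction builds the adjoint $H$ pointwise and then extracts linearity from the faithfulness of the pairing between signatures and tensors (which the paper delegates to \cite[Lemma~5.2]{DLPR20} rather than invoking Chen--Chow directly, but the substance is the same). You are also right that the statement as printed has a typo — it should read $g\circ F$ — and your reading is the one the paper's proof actually uses.
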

\begin{proof}
 If for any $g\in\R[\varltwo{\R^t}]$,
 we have $F\circ g\in\R[\varltwo{\R^d}]$,
 this means that for any $x\in T(\R^t)$,
 we have an $h(x)\in T(\R^d)$
 with
 \begin{equation*}
  \langle \sigma(F(X)),x\rangle=\langle\sigma(X),h(x)\rangle
 \end{equation*}
 $\langle\sigma(X),h(\alpha x+y)\rangle=\langle\sigma(F(X)),\alpha x+y\rangle
 =\alpha\langle\sigma(F(X)),x\rangle+\langle\sigma(F(X)),y\rangle
 =\langle\sigma(X),\alpha h(x)+h(y)\rangle$
 and thus by \cite[Lemma~5.2]{DLPR20},
 $h(\alpha x+y)=\alpha h(x)+h(y)$ for all $\alpha,x,y$,
 so $h$ is linear, and $F$ a regular map.

 If $F$ is a regular map and $g(X)=\langle\sigma(X),y\rangle$,
 then $g(F(X))=\langle\sigma(X),H x\rangle$ for $H$ the shuffle homomorphism corresponding to $F$.
\end{proof}

\subsection{Halfshuffle homomorphisms}

Not all shuffle homomorphisms between tensor algebras correspond to maps from paths to paths.
However, when we restrict to the stronger property of being a halfshuffle homomorphism, then we actually always have a corresponding path map.
\begin{theorem}
\label{thm:MpLambdaB}
 \begin{enumerate}
  \item Let $p:\mathbb{R}^d\to\mathbb{R}^t$ be a polynomial map with $p(0)=0$ and $V$ an $\mathbb{R}^t$ path variety. The set of paths $X$ such that $p(X-X_0)\in V$ is a path variety given by $\mathcal{V}(M_p\mathcal{I}(V))$.
  \item Let $B:\mathbb{R}^t\to T(\mathbb{R}^d)$ be linear and $V$ an $\mathbb{R}^t$ path variety. The set of paths $X$ such that $\Lambda_B^*X\in V$ for $(\Lambda_B^*X)_t:=(\langle\sigma(X)_t,B(\word{1})\rangle,\dots,\langle\sigma(X)_t,B(\word{t})\rangle)$ is a variety given by $\mathcal{V}(\Lambda_B\mathcal{I}(V))$.
 \end{enumerate}
\end{theorem}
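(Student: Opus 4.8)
The plan is to derive both parts from one elementary fact about preimages of path varieties under regular maps. Suppose $F:\varltwo{\R^d}\to\varltwo{\R^t}$ and a linear map $H:T(\R^t)\to T(\R^d)$ satisfy $\langle\sigma(F(X)),x\rangle=\langle\sigma(X),Hx\rangle$ for every word $x$ and every $X$; then $H$ is automatically a shuffle homomorphism and $F$ a regular map. For any $\R^t$ path variety $V$ we have, using $V=\mathcal V(\mathcal I(V))$, that $F(X)\in V$ iff $\langle\sigma(F(X)),x\rangle=0$ for all $x\in\mathcal I(V)$, iff $\langle\sigma(X),Hx\rangle=0$ for all $x\in\mathcal I(V)$, iff $X\in\mathcal V(\{Hx:x\in\mathcal I(V)\})=\mathcal V(H\,\mathcal I(V))$; in particular $F^{-1}(V)$ is a path variety. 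So it remains only to exhibit, for $F=\Lambda_B^*$ the map $H=\Lambda_B$, and, for part~(1), for $F=M_p$ the map $H=M_p=\Lambda_{\varphi(p)}$.

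For part~(2) I would first check that $\Lambda_B^*X$ genuinely lies in $\varltwo{\R^t}$: each component $t\mapsto\langle\sigma(X)_t,B(\word i)\rangle$ is a finite linear combination of signature components of $X$, each of which is a continuous path of finite $p$-variation for the same $p<2$ as $X$, so the sum is too. (Implicit in the recursion $\Lambda_B w\word i=\Lambda_B w\succ\Lambda_B\word i$ is that $B$ takes values in $T^{\geq1}(\R^d)$; I keep that standing assumption.) The substantive input is the identity
\[
 \langle\sigma(\Lambda_B^*X)_t,w\rangle=\langle\sigma(X)_t,\Lambda_B w\rangle\qquad\text{for all }w\in T(\R^t),\ t\in[0,T],
\]
which is \cite{colmenarejopreiss20}[Theorem~8], and which I would also reprove by induction on $|w|$. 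The cases $|w|\le1$ are immediate from the definition of $\Lambda_B^*X$ together with $\Lambda_B\emptyword=\emptyword$ and $\Lambda_B\word i=B(\word i)\in T^{\geq1}(\R^d)$ (this membership kills the would-be constant term, so $\langle\sigma(\Lambda_B^*X)_0,\word i\rangle=0$). For $w=v\word i$ with $|v|\ge1$ one writes $w=v\succ\word i$, applies the halfshuffle identity $\langle\sigma(\Lambda_B^*X),v\succ\word i\rangle=\int_0^T\langle\sigma(\Lambda_B^*X)_t,v\rangle\,\mathrm d\langle\sigma(\Lambda_B^*X)_t,\word i\rangle$, uses the inductive hypothesis on the stopped paths $\Lambda_B^*(X|_{[0,t]})=(\Lambda_B^*X)|_{[0,t]}$ to replace the integrand by $\langle\sigma(X)_t,\Lambda_B v\rangle$ and $\langle\sigma(X)_t,B(\word i)\rangle$, and reads the result back off the halfshuffle identity as $\langle\sigma(X),\Lambda_B v\succ\Lambda_B\word i\rangle=\langle\sigma(X),\Lambda_B w\rangle$. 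Setting $t=T$ is exactly the hypothesis of the preimage fact, so $\{X:\Lambda_B^*X\in V\}=\mathcal V(\Lambda_B\,\mathcal I(V))$.

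For part~(1) I would reduce to part~(2) by identifying $M_pX=p(X-X_0)$ with $\Lambda_{\varphi(p)}^*X$. By Ree's shuffle identity the map $q\mapsto\langle\sigma(X)_t,\varphi(q)\rangle$ is a ring homomorphism $\R[x_1,\dots,x_d]\to\R$ sending $x_j$ to $\langle\sigma(X)_t,\word j\rangle=X_t^{[j]}-X_0^{[j]}$, hence it is evaluation at $X_t-X_0$; so the $i$-th component of $\Lambda_{\varphi(p)}^*X$ at time $t$ is $\langle\sigma(X)_t,\varphi(p_i)\rangle=p_i(X_t-X_0)$, i.e.\ $\Lambda_{\varphi(p)}^*X=p(X-X_0)=M_pX$. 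The hypothesis $p(0)=0$ makes every $\varphi(p_i)$ constant-term free, so $\varphi(p)$ maps into $T^{\geq1}(\R^d)$ and part~(2) applies with $B=\varphi(p)$, yielding $\{X:p(X-X_0)\in V\}=\mathcal V(\Lambda_{\varphi(p)}\,\mathcal I(V))=\mathcal V(M_p\,\mathcal I(V))$.

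The main obstacle is the identity $\langle\sigma(\Lambda_B^*X)_t,w\rangle=\langle\sigma(X)_t,\Lambda_B w\rangle$ — the one genuinely analytic statement, tying the nonassociative recursion defining $\Lambda_B$ to the halfshuffle identity for iterated integrals — together with the routine but necessary check that $\Lambda_B^*X$ has finite $p$-variation for some $p<2$. Once these are in hand, the remaining manipulations are purely formal.
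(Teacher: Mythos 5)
Your proof is correct and follows essentially the same route as the paper's: reduce both parts to the adjointness identities $\langle\sigma(p(X-X_0)),x\rangle=\langle\sigma(X),M_p x\rangle$ and $\langle\sigma(\Lambda_B^*X),x\rangle=\langle\sigma(X),\Lambda_B x\rangle$, the second being \cite{colmenarejopreiss20}[Theorem~8], and then read off the preimage of $V=\mathcal V(\mathcal I(V))$. The paper's proof simply invokes these two identities, whereas you reprove the $\Lambda_B$ identity by induction via the halfshuffle identity, derive part~(1) explicitly from part~(2) by the computation $p(X-X_0)=\Lambda_{\varphi(p)}^*X$ (which the paper leaves implicit in the definition $M_p:=\Lambda_{\varphi(p)}$), and add the useful sanity checks that $B$ must land in $T^{\geq1}(\R^d)$ for the recursion to parse and that $\Lambda_B^*X$ has finite $p$-variation; these are all correct and fill in details the paper takes for granted.
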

\begin{proof}
 $p(X-X_0)\in V$ if and only if
 \begin{equation*}
  0= \langle \sigma(p(X-X_0)),x\rangle=\langle\sigma(X),M_p x\rangle
 \end{equation*}
 for all $x\in\mathcal{I}(V)$.
 Let $X^B:\,t\mapsto(\langle\sigma(X)_t,B(\word{1})\rangle,\dots,\langle\sigma(X)_t,B(\word{t})\rangle)$. Then, by \cite{colmenarejopreiss20}[Theorem 8], $X^B\in V$ if and only if
 \begin{equation*}
  0=\langle\sigma(X^B),x\rangle=\langle\sigma(X),\Lambda_B x\rangle
 \end{equation*}
 for all $x\in\mathcal{I}(V)$.

\end{proof}

 Note that $M_p\mathcal{I}(V)$ and $\Lambda_B\mathcal{I}(V)$ will in general not be a shuffle ideal, but needs to be extended to one.

 Also note importantly that $p(X-X_0)$ might be tree-like equivalent to a path $p(Y-Y_0)$
 even if $X-X_0$ and $Y-Y_0$ are not tree-like equivalent.

 So for example, it might be the case that $p(X-X_0)\in\pathsin{M}$,
 i.e.\ the reduced path of $p(X-X_0)$ lies in the point variety $M$,
 even if there is no $Y$ tree-like equivalent to $X$ such that $p(Y-Y_0)$ lies in $M$.

 We give some examples illustrating this intrcicacy.

 \begin{example}
  Let $p:\mathbb{R}^3\to\mathbb{R}^2$ be given by $p(x,y,z)=(x,y)$. Then $\mathcal{V}(M_p\mathcal{I}(\overline{\{\mathbf{0}\}}))$, the path variety of all $(X^{[1]},X^{[2]},X^{[3]})$ such that $(X^{[1]},X^{[2]})$ is tree-like, is strictly larger than $\mathcal{V}(\mathscr{I}_{\succ}(\varphi(p_1),\varphi(p_2)))$, the variety of paths $(X^{[1]},X^{[2]},X^{[3]})$ which are tree-like equivalent to $(0,0,X^{[3]})$.
  Consider for example $[0,2]\to \mathbb R,\,t\mapsto ((t-1)^2,0,t)$.
 \end{example}

 \begin{example}
  While any path $(X^{[1]},X^{[2]},X^{[3]})$ which is tree-like equivalent to $(0,0,X^{[3]})$ and $(X^{[1]},0,0)$ is tree-like, there are non-tree-like paths $(X^{[1]},X^{[2]},X^{[3]})$ such that $(X^{[1]},X^{[2]}), \,(X^{[1]},X^{[3]})$ and $(X^{[2]},X^{[3]})$ are all tree-like, i.e. the path variety $\mathcal{V}(M_p T^{\geq 1}(\mathbb{R}^2)\cup M_q T^{\geq 1}(\mathbb{R}^2) \cup M_r T^{\geq 1}(\mathbb{R}^2))$, where $p(x,y,z)=(x,y),\, q(x,y,z)=(x,z), \,r(x,y,z)=(y,z)$, is strictly larger than $\overline{\{\mathbf{0}\}}$.

 An example can be constructed by taking any non-tree-like loop $L_1$ in $\mathbb{R}^2$, mirroring it by some axis such that the resulting $L_2\neq \overleftarrow{L_1}$,
 then embedding $\{L_1, L_2\}$ once in some plane in $\mathbb{R}^3$, $L_1\mapsto L_1', L_2\mapsto L_2'$,
 and $\{\overleftarrow{L_2},\overleftarrow{L_1}\}$ in another non-parallel plane, $\overleftarrow{L_2}\mapsto \overleftarrow{L_2''}$, $\overleftarrow{L_1}\mapsto \overleftarrow{L_1''}$,
 such that the intersection axis of the two planes is parallel  to both embedded symmetry axes of $\{L_1', L_2'\}$ and $\{\overleftarrow{L_2''},\overleftarrow{L_1''}\}$.
 The example path will then be $L_1'\sqcup \overleftarrow{L_1''}\sqcup L_2'\sqcup\overleftarrow{L_2''}$.
 Then choose the $z$-axis as parallel to the embedded symmetry axes, and the $x$ and $y$-axis such that they half the angles between the two planes we chose for the embeddings.

 \begin{figure}
 \includegraphics[width=0.4\textwidth]{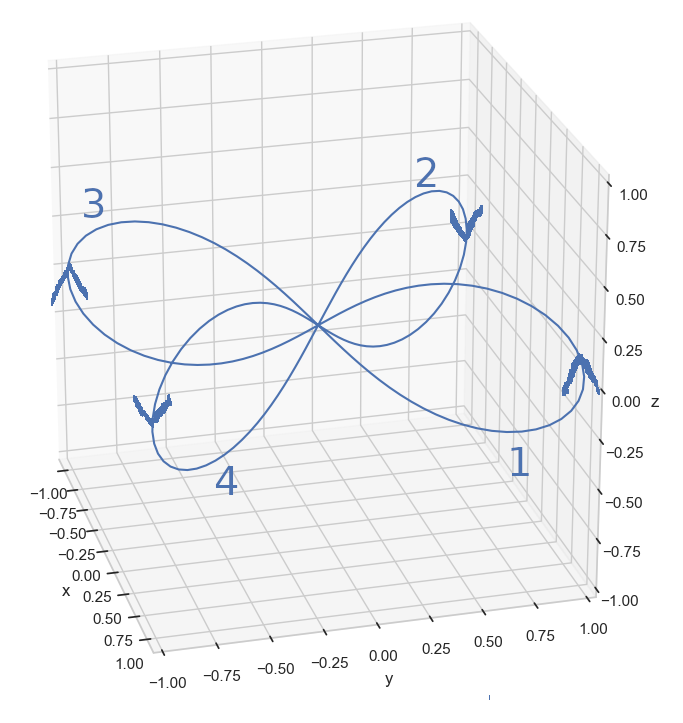}\quad
 \includegraphics[width=0.45\textwidth]{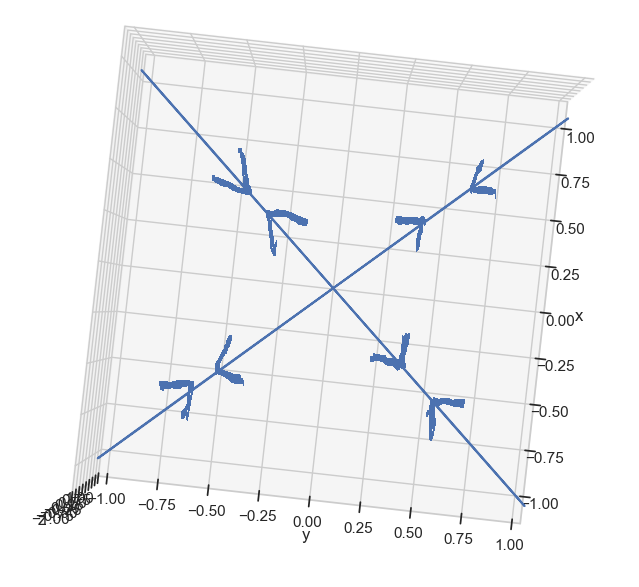}
 \caption{The path through $\R^3$ on the left side which passes through the four loops in the given order
 and in the directions indicated by the arrows
 is an example of a non-tree like path whose standard two dimensional projections are all tree-like.
 The picture on the right side for example shows the path from above, clearly tree-like in the $x$-$y$-projection.
 Note that if the path instead passes through the loops with the same directions, but in the order 1-3-2-4,
 then its $x$-$y$ and $x$-$z$-projections are still tree-like,
 but its $y$-$z$-projection is not.}
 \end{figure}
 \end{example}

 \begin{example}
  Let $p:\mathbb{R}^2\to\mathbb{R}^2$ be given by $p(x,y)=(x^2,y)$. Then $\mathcal{V}(M_p\mathcal{I}(\overline{\{\mathbf{0}\}}))$, the path variety of all $(X^{[1]},X^{[2]})$ such that $((X^{[1]}-X^{[1]}_0)^2,X^{[2]})$ is tree-like, is strictly larger than $\mathcal{V}(\mathscr{I}_{\succ}(\varphi(p_1),\varphi(p_2)))$, the variety of paths $(X^{[1]},X^{[2]})$ which are tree-like.
  Consider for example the circular path $[0,2\pi]\to\mathbb R,\, t\mapsto (\sin(t),\cos(t))$.
 \end{example}

 In general, for a polynomial map $p:\R^d\to\R^t$,
 we have that the path variety $\{X\in\varltwo{\R^d}|p(X)\text{is treelike}\}$ (cf.\ \cite[Corollary~3]{colmenarejopreiss20}) is contained in $\pathsin{\{x|p(x)=0\}}$, but the two need not be equal.

To come back to the statement from the beginning of this subsection,
indeed, any right halfshuffle homomorphism is a map of the form $\Lambda_B$,
and thus corresponds to a regular map by Theorem~\ref{thm:MpLambdaB}.

\begin{proposition}
\label{prop:halfshufflehoms}
Let $H:\,T(\R^t)\to T(\R^d)$ be linear. Then
 \begin{enumerate}
  \item If $H$ is a $\succ$ homomorphism,
  then there is a unique $B\in L(\R^t,T(\R^d))$ such that
  $H=\Lambda_B$.
  \item If $H$ is a $\prec$ homomorphism,
  then there is a unique $B\in L(\R^t,T(\R^d))$ such that
  $H=\antipode\Lambda_B\antipode$.
  \item If $H$ is such that $H(x\succ y)=H(x)\prec H(y)$ for all $x,y\in T(\R^t)$,
  then there is a unique $B\in L(\R^t,T(\R^d))$ such that $H=\antipode\Lambda_B$.
  \item If $H$ is such that $H(x\prec y)=H(x)\succ H(y)$
  for all $x,y\in T(\R^t)$,
  then there is a unique $B\in L(\R^t,T(\R^d))$ such that $H=\Lambda_B\antipode$.
 \end{enumerate}

\end{proposition}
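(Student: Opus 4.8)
The plan is to prove part (1) directly by induction on word length, and then derive parts (2)--(4) as formal corollaries using the antipode identities \eqref{eq:antipodehalfshuffle}. For part (1): given a $\succ$ homomorphism $H:T(\R^t)\to T(\R^d)$, the only possible candidate is $B:=H\restriction_{\R^t}$, i.e.\ $B(\word i):=H(\word i)$ for each letter $\word i$, which is clearly linear; this forces uniqueness since $\Lambda_B$ is determined by its values on letters. It remains to show $H=\Lambda_B$. I would prove $H(w)=\Lambda_B(w)$ for all words $w\in T^{\geq 1}(\R^t)$ by induction on $|w|$. The base case $|w|=1$ is the definition of $B$. For the inductive step, write $w=v\word i$ with $v$ a nonempty word and $\word i$ a letter. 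By the recursive definition of the halfshuffle, $v\word i=v\succ\word i$ (this is exactly the first defining relation $w\succ\word i:=w\word i$), so since $H$ is a $\succ$ homomorphism, $H(w)=H(v\succ\word i)=H(v)\succ H(\word i)=H(v)\succ B(\word i)$. By the induction hypothesis $H(v)=\Lambda_B(v)$, hence $H(w)=\Lambda_B(v)\succ B(\word i)=\Lambda_B(v)\succ\Lambda_B(\word i)=\Lambda_B(v\word i)=\Lambda_B(w)$, the last two equalities being the recursive definition of $\Lambda_B$. One should also check the empty word: $H(\emptyword)$ must be handled according to whatever convention is in force for homomorphisms of $T^{\geq 1}$; since $\Lambda_B\emptyword=\emptyword$ by definition and the halfshuffle ideal machinery works on $T^{\geq 1}$, I would either restrict attention to $T^{\geq 1}$ or note that the homomorphism property pins down $H(\emptyword)=\emptyword$ as well.

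For part (2): if $H$ is a $\prec$ homomorphism, consider $H':=\antipode H\antipode$. Using \eqref{eq:antipodehalfshuffle}, namely $\antipode(x\prec y)=\antipode x\succ\antipode y$, together with $\antipode^2=\id$, one computes that $H'(x\succ y)=\antipode H\antipode(x\succ y)=\antipode H(\antipode x\prec\antipode y)=\antipode\big(H\antipode x\prec H\antipode y\big)=(\antipode H\antipode x)\succ(\antipode H\antipode y)=H'(x)\succ H'(y)$, so $H'$ is a $\succ$ homomorphism. By part (1), $H'=\Lambda_B$ for a unique $B$, whence $H=\antipode\Lambda_B\antipode$, and uniqueness of $B$ is inherited. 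Parts (3) and (4) are entirely analogous: for (3), if $H(x\succ y)=H(x)\prec H(y)$, set $H':=\antipode H$ and check, using $\antipode(x\prec y)=\antipode x\succ\antipode y$, that $H'(x\succ y)=\antipode H(x\succ y)=\antipode(Hx\prec Hy)=\antipode Hx\succ\antipode Hy=H'(x)\succ H'(y)$, so $H'=\Lambda_B$ and $H=\antipode\Lambda_B$; for (4), set $H':=H\antipode$ and use $\antipode(x\succ y)=\antipode x\prec\antipode y$ to get that $H'$ is a $\succ$ homomorphism, giving $H=\Lambda_B\antipode$.

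The only genuinely delicate point I anticipate is bookkeeping around the empty word and the domain $T^{\geq 1}$ versus $T$: the halfshuffles $\succ,\prec$ are only defined on $T^{\geq 1}$, so "$\succ$ homomorphism" must be interpreted on $T^{\geq 1}$, and one must be careful that composing with $\antipode$ (which is defined on all of $T$ and preserves $T^{\geq 1}$, acting as $-\id$ on $\R^t\subset T^{\geq 1}$) does not cause the reindexed map to leave the intended domain. Apart from that the argument is a routine structural induction plus three formal manipulations with \eqref{eq:antipodehalfshuffle}; no hard analysis or combinatorics is involved. The combination with Theorem~\ref{thm:MpLambdaB} then immediately yields that each of these four classes of half\-shuffle-type homomorphisms is realized by a regular map (of the form $\Lambda_B^*$, its time-reversal conjugate, etc.), as claimed in the sentence preceding the proposition.
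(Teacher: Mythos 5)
Your proposal is correct and follows essentially the same route as the paper: part (1) rests on the fact that the nonempty words are generated under $\succ$ by the letters (the paper cites Loday's Proposition~1.8 for this, whereas you prove exactly that fact by a short induction on word length using $v\word{i}=v\succ\word{i}$), and parts (2)--(4) are reduced to (1) by conjugating with $\antipode$ via Equation~\eqref{eq:antipodehalfshuffle} and $\antipode^2=\id$, just as in the paper. The only small caveat is your aside that the homomorphism property ``pins down'' $H(\emptyword)=\emptyword$ --- it does not, since $\succ$ never sees the empty word --- so the correct reading (which the paper also tacitly uses) is your alternative of working on $T^{\geq 1}(\R^t)$.
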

\begin{proof}
 For (1), choose $B(\word{i}):=H\word{i}$ and then we have $H=\Lambda_B$ by them both being $\succ$ homomorphisms and the fact that $(T(\R^t),\succ)$ is generated by the letters, see \cite[Proposition~1.8]{L95}.
 Uniqueness of $B$ follows from $\Lambda_B=\Lambda_{B'}$ implying
 $B(\word{i})=\Lambda_B\word{i}=\Lambda_{B'}\word{i}=B'(\word{i})$.

 (2), (3) and (4) then follow directly by Equation \eqref{eq:antipodehalfshuffle}.

\end{proof}

\subsection{More general morphisms}
As mentioned before,
it is a difficult open problem to explicitly characterize the set of regular maps $\varltwo{\R^t}\to\varltwo{\R^d}$.
Since this seems out of reach for now, we instead develop in this section
a rich subclass of the regular maps which contains all examples that we can think about so far.

In this section,
we restrict to the spaces $\varltwoz{\R^d}$ of paths started in $0$ for simplicity,
but note that any regular map $F:\varltwoz{\R^d}\to\varltwoz{\R^t}$
together with a completely arbitrary $f:\varltwo{\R^d}\to\R^t$
forms a regular map $f+F:\varltwo{\R^d}\to\varltwo{\R^t},\,(f+F)(X)_t=f(X)+F(X-X_0)_t$
(this is simply due to the fact that the signature doesn't see the starting point),
and all regular maps $\varltwo{\R^d}\to\varltwo{\R^t}$
can be obtained in this way.

First of all,
we generalize the class of $\Lambda_B$ by allowing $B$ to depend on $X$.

\begin{lemma}
\label{lem:Lambda_BX}
 Let $B:\varltwoz{\R^d}\to L(\R^t,T(\R^d))$ be a regular map.
 Then $\Lambda_{B(\cdot)}^*:\,\varltwoz{\R^d}\to\varltwoz{\R^t}, X\mapsto \Lambda_{B(X)}^*X$ is a regular map.
\end{lemma}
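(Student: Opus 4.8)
The plan is to reduce the claim to Theorem~\ref{thm:MpLambdaB}(2) by exhibiting, for a suitable target variety, the map $\Lambda_{B(\cdot)}^*$ as a pointwise-$\succ$-type operation, and to use the fact that a regular map $B:\varltwoz{\R^d}\to L(\R^t,T(\R^d))$ means each matrix entry $\langle B(X)(\word{i}),w\rangle$ is a fixed shuffle polynomial in $\sigma(X)$, i.e.\ of the form $\langle\sigma(X),c_{i,w}\rangle$ for some $c_{i,w}\in T(\R^d)$. The key observation is that the coordinates of $Y:=\Lambda_{B(X)}^*X$, namely $Y^{[j]}_t=\langle\sigma(X)_t,B(X)(\word{j})\rangle$, are \emph{themselves} integrals against the signature of $X$ in which the integrand coefficients are again constants paired with $\sigma(X)$. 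Concretely I would first record, using the halfshuffle identity and the recursive definition $\Lambda_B w\word{i}=\Lambda_B w\succ\Lambda_B\word{i}$ from the preliminaries, that for every word $x\in T(\R^t)$ one has $\langle\sigma(\Lambda_{B(X)}^*X),x\rangle=\langle\sigma(X),\Lambda_{B(X)}x\rangle$ where $\Lambda_{B(X)}$ is built from the $X$-dependent linear map $B(X)$ exactly as in \cite{colmenarejopreiss20}[Theorem~8]; this is the same computation as in the proof of Theorem~\ref{thm:MpLambdaB}(2), with $B$ replaced by $B(X)$ but $X$ held fixed inside a single evaluation.

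Next I would expand $\Lambda_{B(X)}x$ explicitly. Since $B(X)$ acts linearly and $\Lambda_{B(X)}$ is a $\succ$-homomorphism sending $\word{j}$ to $B(X)(\word{j})$, iterating the recursion on a word $x=\word{j}_1\cdots\word{j}_n$ gives $\Lambda_{B(X)}x$ as an iterated right halfshuffle of the $B(X)(\word{j}_k)$, and each $B(X)(\word{j}_k)$ is a (finite) linear combination $\sum_w \langle\sigma(X),c_{j_k,w}\rangle\, w$ with the $c_{j_k,w}\in T(\R^d)$ independent of $X$. Plugging this into $\langle\sigma(X),\Lambda_{B(X)}x\rangle$ and using multilinearity of $\succ$ together with the shuffle identity $\langle\sigma(X),a\shuffle b\rangle=\langle\sigma(X),a\rangle\langle\sigma(X),b\rangle$ repeatedly, every scalar $\langle\sigma(X),c_{j_k,w}\rangle$ that appears can be absorbed: $\langle\sigma(X),c\rangle\langle\sigma(X),a\rangle=\langle\sigma(X),c\shuffle a\rangle$. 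Hence $\langle\sigma(\Lambda_{B(X)}^*X),x\rangle=\langle\sigma(X),Hx\rangle$ for a \emph{fixed} linear map $H:T(\R^t)\to T(\R^d)$ given by the explicit (finite, word-by-word) formula just obtained, and no residual $X$-dependence remains in $H$. By the second Proposition of Section~\ref{sec:morphisms} (any adjoint linear $H$ existing forces $H$ to be a shuffle homomorphism and $F$ regular), this proves $\Lambda_{B(\cdot)}^*$ is a regular map.

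The main obstacle I anticipate is purely bookkeeping: making the substitution $B\rightsquigarrow B(X)$ rigorous inside the halfshuffle identity. One must check that the halfshuffle identity $\langle\sigma(X),x\succ y\rangle=\int_0^T\langle\sigma(X)_t,x\rangle\,\mathrm d\langle\sigma(X)_t,y\rangle$ and the auxiliary computation from \cite{colmenarejopreiss20}[Theorem~8] still apply when the "coordinate path" $Y=\Lambda_{B(X)}^*X$ is a genuine $\varltwoz{\R^t}$ path — i.e.\ that $t\mapsto\langle\sigma(X)_t,B(X)(\word{j})\rangle$ has finite $p$-variation for some $p<2$, which follows because it is a finite linear combination of components of $\sigma(X)$, each of which is of bounded $p$-variation for the same $p$ as $X$ by the standard estimates in \cite{FrizVictoir10}. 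Once that regularity point is dispatched, the rest is the algebraic absorption described above, and the uniqueness/well-definedness of $H$ is automatic from \cite[Lemma~5.2]{DLPR20} exactly as in the earlier propositions of this section.
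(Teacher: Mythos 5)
Your proposal is correct and follows essentially the same route as the paper's proof: expand $B(X)(\word{i})=\sum_j\langle\sigma(X),x_{ij}\rangle y_{ij}$ with fixed tensors, use $\langle\sigma(\Lambda^*_{B(X)}X),w\rangle=\langle\sigma(X),\Lambda_{B(X)}w\rangle$, and absorb the scalar coefficients via the shuffle identity into a fixed word-by-word linear map $H$, which by the earlier proposition is automatically a shuffle homomorphism. The additional remark on the $p$-variation of $t\mapsto\langle\sigma(X)_t,B(X)(\word{j})\rangle$ is a harmless extra check that the paper leaves implicit.
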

\begin{proof}
Any regular map $B:\varltwoz{\R^d}\to L(\R^t, T(\R^d))$ can be expressed as
\begin{equation*}
B(X)(\word{i})=\sum_{j}\langle\sigma(X),x_{ij}\rangle y_{ij}
\end{equation*}
for some $x_{ij}, y_{ij}\in T(\R^d)$.

For $w=\word{i}_1\dots\word{i}_n$, we then have
\begin{align*}
 \langle\sigma(\Lambda^*_{B(X)}X),w\rangle&=\langle\sigma(X),\Lambda_{B(X)}w\rangle
 =\sum_{j_1,\dots,j_n}\langle\sigma(X),x_{i_1j_1}\rangle\dots\langle\sigma(X),x_{i_nj_n}\rangle\langle\sigma(X),y_{i_1j_1}\succ\dots\succ y_{i_nj_n}\rangle\\
 &=\langle\sigma(X),\sum_{j_1,\dots,j_n} x_{i_1j_1}\shuffle \dots\shuffle x_{i_nj_n}\shuffle(y_{i_1j_1}\succ\dots\succ y_{i_nj_n})\rangle
\end{align*}
and the map $\word{i}_1\dots\word{i}_n\to \sum_{j_1,\dots,j_n} x_{i_1j_1}\shuffle \dots\shuffle x_{i_nj_n}\shuffle(y_{i_1j_1}\succ\dots\succ y_{i_nj_n})$ extends linearly to all of $T(\R^t)$.
\end{proof}

Since compositions of regular maps are regular maps,
we get that any
\begin{equation*}
 \varltwo{\R^d}\to\varltwo{\R^t},\,X\mapsto \antipode^{s_1}\Lambda^*_{B_k(X)}\antipode\Lambda^*_{B_{k-1}(X)}\cdots\antipode\Lambda^*_{B_2(X)}\antipode\Lambda^*_{B_1(X)}\antipode^{s_0}X
\end{equation*}
for $k\in\mathbb{N}_0$ ($k=0$ refers to the identity $X\mapsto X$ and to time inverse $X\mapsto \antipode X$), $s_0,s_1\in\{0,1\}$, $n_i\in\mathbb{N}$ with $n_1=d$ and $n_{k+1}=t$, $B_i:\,\varltwo{\R^d}\to L(\R^{n_{i+1}},T(\R^{n_i}))$ regular maps,
is a regular map,
and we call the class of all functions of this form $\Upsilon(d,t)$.
Then $\Upsilon(d,t)\circ\Upsilon(t,e)\subseteq\Upsilon(d,e)$ is a direct consequence of Proposition \ref{prop:halfshufflehoms} and the fact that $\antipode^2=\id$.
Thus, $((\varltwoz{\R^d})_d,\Upsilon)$ is a small category and a subcategory of the small category of $((\varltwoz{\R^d})_d$ with regular maps.

The class of regular maps introduced in the following lemma is such that the path $F(X)$ is developed only from finite dimensional information about $X$.
\begin{lemma}
\label{lem:Lambda_AY}
 Let $A:\varltwoz{\R^d}\to L(\R^t,\R^n)$ be a regular map and $Y\in\varltwoz{\R^n}$.
 Then $F:\,\varltwoz{\R^d}\to\varltwoz{\R^t}, X\mapsto \Lambda_{A(X)}^*Y$ is a regular map.
\end{lemma}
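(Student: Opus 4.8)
The plan is to express the signature of $F(X) = \Lambda_{A(X)}^* Y$ as a shuffle-linear functional of $\sigma(X)$, which by the second Proposition of Section~\ref{sec:morphisms} (existence of an adjoint linear map $H$) immediately gives that $F$ is a regular map. The key observation is that $A(X) \in L(\R^t,\R^n)$ is genuinely a linear (matrix) map, not a map into $T(\R^n)$, so $\Lambda_{A(X)}$ acts on $Y$'s signature simply by linear pushforward on each tensor level; the $X$-dependence enters only through the matrix entries of $A(X)$, which are themselves regular functions of $X$.

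First I would write $A$ in the explicit form granted by the definition of a regular map into a vector space: $A(X)(\word{i}) = \sum_{k=1}^n \big(\sum_j \langle \sigma(X), x_{ijk}\rangle\big)\, \word{k}$ with letters $\word{k}\in\{\word{1},\dots,\word{n}\}$ and $x_{ijk}\in T(\R^d)$; equivalently, each matrix entry $A(X)_{ki} = \langle \sigma(X), a_{ki}\rangle$ for suitable $a_{ki}\in T(\R^d)$. Next, for a word $w = \word{i}_1\cdots\word{i}_m \in T(\R^t)$, I would compute using $\Lambda_B w = \Lambda_B(\word{i}_1)\succ\cdots\succ\Lambda_B(\word{i}_m)$ (left-bracketed) together with the halfshuffle identity of Section~\ref{sec:halfshuffle} applied to $\sigma(Y)$, exactly as in the proof of Lemma~\ref{lem:Lambda_BX}, but now with the extra twist that the coefficients live in $\R$ rather than being absorbed via the shuffle identity. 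Because $A(X)$ is linear, $\Lambda_{A(X)}(\word{i}_\ell) = \sum_{k_\ell} A(X)_{k_\ell i_\ell}\,\word{k_\ell} = \sum_{k_\ell} \langle\sigma(X), a_{k_\ell i_\ell}\rangle\,\word{k_\ell}$, so
\begin{align*}
 \langle\sigma(F(X)), w\rangle &= \langle\sigma(Y), \Lambda_{A(X)} w\rangle
 = \sum_{k_1,\dots,k_m} \langle\sigma(X), a_{k_1 i_1}\rangle\cdots\langle\sigma(X), a_{k_m i_m}\rangle\,\langle\sigma(Y), \word{k_1}\succ\cdots\succ\word{k_m}\rangle.
\end{align*}
The scalar $\langle\sigma(Y), \word{k_1}\succ\cdots\succ\word{k_m}\rangle$ is a fixed real number (since $Y$ is fixed), and the product of $\langle\sigma(X),\cdot\rangle$ factors is $\langle\sigma(X), a_{k_1 i_1}\shuffle\cdots\shuffle a_{k_m i_m}\rangle$ by Ree's shuffle identity. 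Hence $\langle\sigma(F(X)),w\rangle = \langle\sigma(X), Hw\rangle$ where
\begin{equation*}
 H(\word{i}_1\cdots\word{i}_m) := \sum_{k_1,\dots,k_m} \langle\sigma(Y), \word{k_1}\succ\cdots\succ\word{k_m}\rangle\; a_{k_1 i_1}\shuffle\cdots\shuffle a_{k_m i_m},
\end{equation*}
and $H$ extends linearly to all of $T(\R^t)$. By the second Proposition of Section~\ref{sec:morphisms}, $H$ is automatically a shuffle homomorphism and $F$ is regular.

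The only point that needs care — and the place I expect a referee to look hardest — is checking that $F(X)$ indeed lands in $\varltwoz{\R^t}$, i.e.\ that $\Lambda_{A(X)}^* Y$ is a well-defined $p$-variation path for some $p<2$ and that its signature is the grouplike element computed above; this is essentially the content of \cite{colmenarejopreiss20}[Theorem 8] applied to the linear map $A(X)$ (for each fixed $X$ the matrix $A(X)$ is constant, so $\Lambda_{A(X)}^*Y$ is literally the linear image $A(X)\cdot Y$ of the path $Y$, which trivially has finite $p$-variation). One should also note, as in Lemma~\ref{lem:Lambda_BX}, that $H$ need not be defined level-wise in any naive way and that the double sum over $k_1,\dots,k_m$ is finite, so the formula for $H$ makes sense. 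With these remarks the argument is complete; the core is the display above, and it is a direct adaptation of the computation in Lemma~\ref{lem:Lambda_BX} specialized to the case where $B(X)(\word{i})$ is a linear combination of letters rather than of arbitrary tensors.
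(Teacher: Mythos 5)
Your proposal is correct and follows essentially the same route as the paper: expand $A(X)$ via its regular-map representation (your matrix-entry indexing $a_{ki}$ is just a basis-explicit version of the paper's $\sum_j\langle\sigma(X),x_{ij}\rangle v_{ij}$ with $v_{ij}\in\R^n$), compute $\langle\sigma(Y),\Lambda_{A(X)}w\rangle$ on a word, absorb the fixed scalars $\langle\sigma(Y),\cdot\rangle$ and combine the $\langle\sigma(X),\cdot\rangle$ factors by the shuffle identity, and extend the resulting map $H$ linearly so that regularity follows from the adjoint-existence proposition. Your added remark that $\Lambda_{A(X)}^*Y$ is literally the linear image $A(X)\cdot Y$ of $Y$ (hence a well-defined path of finite $p$-variation) is a harmless extra check the paper leaves implicit.
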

\begin{proof}
Any regular map $A:\,\varltwoz{\R^d}\to L(\R^t,\R^n)$ can be expressed as
\begin{equation*}
 A(X)(\word{i})=\sum_j \langle\sigma(X),x_{ij}\rangle v_{ij}
\end{equation*}
for some $x_{ij}\in T(\R^d)$ and $v_{ij}\in\R^n$.
Remember that we identify $\R^n$ with the subspace of letters in $T(\R^n)$.

For $w=\word{i}_1\dots\word{i}_n$,
we then have

\begin{align*}
 \langle\sigma(\Lambda^*_{A(X)}Y),w\rangle&=\langle\sigma(Y),\Lambda_{A(X)}w\rangle
 =\sum_{j_1,\dots,j_n}\langle\sigma(X),x_{i_1j_1}\rangle\dots\langle\sigma(X),x_{i_nj_n}\rangle\langle\sigma(Y),v_{i_1j_1}\succ\dots\succ v_{i_nj_n}\rangle\\
 &=\langle\sigma(X),\sum_{j_1,\dots,j_n} \langle \sigma(Y),v_{i_1j_1}\succ\dots\succ v_{i_nj_n}\rangle\, x_{i_1j_1}\shuffle \dots\shuffle x_{i_nj_n}\rangle
\end{align*}
and the map $\word{i}_1\dots\word{i}_n\to\sum_{j_1,\dots,j_n} \langle \sigma(Y),v_{i_1j_1}\succ\dots\succ v_{i_nj_n}\rangle\, x_{i_1j_1}\shuffle \dots\shuffle x_{i_nj_n}$
extends linearly to all of $T(\R^t)$.
\end{proof}

To start putting everything together,
we observe that concatenation of two regular maps yields a regular map again.

\begin{lemma}
\label{lem:FsqcupG}
 Let $F,G:\varltwoz{\R^d}\to\varltwoz{\R^t}$ be regular maps. Then $X\mapsto F(X)\sqcup G(X)$ is a regular map.
\end{lemma}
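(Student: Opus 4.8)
The plan is to reduce the concatenation $F \sqcup G$ to a question purely about signatures, using Chen's identity, and then to express the resulting functional in the shuffle-linear form required by the definition of regular map. First I would recall that since $F$ and $G$ are regular maps, there are shuffle homomorphisms $H_F, H_G : T(\R^t) \to T(\R^d)$ with $\langle \sigma(F(X)), x \rangle = \langle \sigma(X), H_F x \rangle$ and likewise for $G$, for all $x \in T(\R^t)$ and all $X \in \varltwoz{\R^d}$. Then by Chen's identity, for any word $w \in T(\R^t)$,
\begin{equation*}
 \langle \sigma(F(X) \sqcup G(X)), w \rangle = \langle \sigma(F(X)) \conc \sigma(G(X)), w \rangle = \sum_{(w)}^{\conc} \langle \sigma(F(X)), w_1 \rangle \langle \sigma(G(X)), w_2 \rangle = \sum_{(w)}^{\conc} \langle \sigma(X), H_F w_1 \rangle \langle \sigma(X), H_G w_2 \rangle.
\end{equation*}

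Next I would use the shuffle identity to collapse the product of two signature components into a single one: the right-hand side equals $\sum_{(w)}^{\conc} \langle \sigma(X), (H_F w_1) \shuffle (H_G w_2) \rangle = \langle \sigma(X), \sum_{(w)}^{\conc} (H_F w_1) \shuffle (H_G w_2) \rangle$. Thus if I define $H : T(\R^t) \to T(\R^d)$ by $Hw := \sum_{(w)}^{\conc} (H_F w_1) \shuffle (H_G w_2)$ — which is well-defined and linear because $\Delta_{\conc}$ is linear and $\shuffle$ is bilinear — then $\langle \sigma(F(X) \sqcup G(X)), w \rangle = \langle \sigma(X), Hw \rangle$ for all $w$ and all $X$. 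By the proposition preceding the definition of regular map (the one stating that the mere existence of such a linear adjoint $H$ forces $H$ to be a shuffle homomorphism and $F$ to be regular), this already establishes that $X \mapsto F(X) \sqcup G(X)$ is a regular map. One small point to check at the outset is that $F(X) \sqcup G(X)$ indeed lands in $\varltwoz{\R^d}$: the concatenation of two paths of finite $p$-variation has finite $p$-variation, and since $F(X)$ and $G(X)$ start at $0$ the concatenation does too, so the map is genuinely $\varltwoz{\R^d} \to \varltwoz{\R^t}$.

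I do not expect a serious obstacle here; the only thing requiring a little care is the bookkeeping for the coproduct. If one wants an explicit formula one can note that $H = \mathrm{m}_\shuffle \circ (H_F \otimes H_G) \circ \Delta_{\conc}$, and verifying that this is linear is immediate from the three factors each being linear. Alternatively, and perhaps more in the spirit of the preceding lemmas, one could bypass invoking the abstract proposition and instead directly exhibit $H$ on words and extend linearly, exactly as in the proofs of Lemma~\ref{lem:Lambda_BX} and Lemma~\ref{lem:Lambda_AY}; the computation above already does this. The mild subtlety — really the only place to be slightly careful — is that $H_F w_1$ and $H_G w_2$ need not be homogeneous even when $w$ is, so one should phrase the extension to all of $T(\R^t)$ as the linear extension of the map defined on the spanning set of words, rather than trying to track gradings.
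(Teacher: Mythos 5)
Your proof is correct and follows essentially the same route as the paper's: apply Chen's identity, use the shuffle relation to collapse the product into a single pairing, and read off the adjoint $H = \mathrm{m}_{\shuffle}\circ (H_F\otimes H_G)\circ\Delta_{\conc}$. The paper states this in one line; your version merely supplies the intermediate steps and the (trivial) check that the target space is $\varltwoz{\R^t}$.
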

\begin{proof}
 By Chen's identity and the shuffle relation,
 \begin{equation*}
  \langle\sigma(F(X)\sqcup G(X)),w\rangle=\langle\sigma(X),\sum_{(w)}^{\conc} H_1 w_1\shuffle H_2 w_2\rangle,
 \end{equation*}
 where $H_1$ and $H_2$ are the shuffle homomorphisms corresponding to $F$ and $G$ respectively.
\end{proof}

As a final incredient,
we have the result that the $\Lambda_B^*$ split nicely under concatenation,
a generalization of \cite[Corollary~4]{colmenarejopreiss20}

\begin{lemma}\label{lem:lambdasplit}
 Let $B\in L(\R^t,\R^n)$ and $X,Y\in\varltwoz{\R^n}$.
 We have
 \begin{equation*}
  \Lambda_B^*(X\sqcup Y)=\Lambda_B^*(X)\sqcup\Lambda_{\delta_X B}^*(Y)
 \end{equation*}
where $\delta_X B(\word{i}):=B(\word{i})+\sum_{(w=B(\word{i}))}^{\conc}\langle\sigma(X),w'\rangle\,w''$.
\end{lemma}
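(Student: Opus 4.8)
The plan is to establish the identity $\Lambda_B^*(X\sqcup Y)=\Lambda_B^*(X)\sqcup\Lambda_{\delta_X B}^*(Y)$ by testing it against an arbitrary word $w\in T(\R^t)$ and using Chen's identity together with the defining recursion $\Lambda_B w\word{i}=\Lambda_B w\succ\Lambda_B\word{i}$. More precisely, I would compute $\langle\sigma(\Lambda_B^*(X\sqcup Y)),w\rangle=\langle\sigma(X\sqcup Y),\Lambda_B w\rangle=\langle\sigma(X)\otimes\sigma(Y),\Delta_\conc\Lambda_B w\rangle$, and compare this with $\langle\sigma(\Lambda_B^*(X)\sqcup\Lambda_{\delta_X B}^*(Y)),w\rangle=\langle\sigma(X)\otimes\sigma(Y),\Delta_\conc w'\rangle$ evaluated with $w'$ mapped appropriately, i.e. $\sum_{(w)}^\conc\langle\sigma(X),\Lambda_B w_1\rangle\langle\sigma(Y),\Lambda_{\delta_X B}w_2\rangle$. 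So the combinatorial heart of the matter is the algebraic identity
\begin{equation*}
 \Delta_\conc\Lambda_B w=\sum_{(w)}^\conc \Lambda_B w_1\otimes\Lambda_{\delta_X B}w_2
\end{equation*}
once paired with $\sigma(X)$ on the left leg, where $\delta_X B$ is the $X$-dependent linear map defined in the statement. Since $B$ here maps into $\R^n$ (the letters), one first needs to observe that $\delta_X B(\word i)=B(\word i)+\sum^{\conc}_{(v=B(\word i))}\langle\sigma(X),v'\rangle v''$ lands in $T(\R^n)$, so $\Lambda_{\delta_X B}$ makes sense as a map on $T(\R^t)$ in the generalized sense of the $\Lambda$-construction with linear $B:\R^t\to T(\R^n)$.

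The key steps, in order, are: (1) reduce to the word-by-word identity via Chen and the shuffle relation as above; (2) prove the claim by induction on the length of $w$, with the base case $w=\word{i}$ amounting to the definition of $\delta_X B$ — indeed $\langle\sigma(X\sqcup Y),B(\word i)\rangle=\langle\sigma(X)\conc\sigma(Y),B(\word i)\rangle=\sum^\conc_{(B(\word i))}\langle\sigma(X),v'\rangle\langle\sigma(Y),v''\rangle$, which is exactly $\langle\sigma(X),B(\word i)\rangle\cdot 1+\langle\sigma(Y),\delta_X B(\word i)\rangle$-type splitting once one isolates the $v'=\emptyword$ and $v''=\emptyword$ terms; (3) for the inductive step $w\word i$, use $\Lambda_B(w\word i)=\Lambda_B w\succ\Lambda_B\word i=\Lambda_B w\succ B(\word i)$ and the interaction of the halfshuffle $\succ$ with the deconcatenation coproduct $\Delta_\conc$. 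The crucial compatibility is the (known) formula for $\Delta_\conc(a\succ b)$ in terms of $\Delta_\conc a$ and $\Delta_\conc b$ — essentially $\Delta_\conc(a\succ b)=\sum a_1\succ b_1\otimes a_2\shuffle b_2$ when one keeps track of the empty-word terms correctly, noting that $\succ$ against a single letter $b$ just appends, so the right tensor leg picks up either nothing or the whole of $b=B(\word i)$, which is precisely where the correction $\delta_X B$ on $Y$'s side comes from.

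The main obstacle I expect is bookkeeping the empty-word / counit terms in the halfshuffle-vs-deconcatenation compatibility: the recursion for $\succ$ is only defined on $T^{\geq 1}$, so one has to be careful at the boundary when a deconcatenation factor is $\emptyword$, and the definition of $\delta_X B$ with its $v'=\emptyword$ summand is exactly engineered to absorb that boundary correctly. A clean way around the $T^{\geq 1}$ subtlety is to phrase everything in terms of $\sigma(X\sqcup Y)=\sigma(X)\conc\sigma(Y)$ at the level of grouplike elements and never leave the pairing-with-signatures picture, rather than trying to prove a purely algebraic coproduct identity in $T(\R^n)$; then the induction is driven by the halfshuffle identity $\langle\sigma(Z),x\succ y\rangle=\int_0^{T}\langle\sigma(Z)_t,x\rangle\,\mathrm d\langle\sigma(Z)_t,y\rangle$ applied to $Z=X\sqcup Y$ and split at the concatenation time, which automatically handles the two regimes $t\le T_X$ and $t> T_X$ and reproduces the two terms $\Lambda_B^*(X)$ and $\Lambda_{\delta_X B}^*(Y)$ on the nose. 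I would also remark that the cited \cite[Corollary~4]{colmenarejopreiss20} is the special case $B$ with image in the letters and no correction needed, so this lemma is exactly the extension where $\delta_X$ appears; checking consistency with that corollary on the overlap is a good sanity check to include.
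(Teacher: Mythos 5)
Your central proposed move — expanding $\langle\sigma(\Lambda_B^*(X\sqcup Y)),w\rangle$ against arbitrary words $w\in T(\R^t)$ and matching it to $\sum_{(w)}^{\conc}\langle\sigma(X),\Lambda_B w_1\rangle\langle\sigma(Y),\Lambda_{\delta_X B}w_2\rangle$ — has a genuine gap relative to what the lemma actually asserts. Testing against all words can only establish equality of the \emph{signatures} of the two sides, i.e.\ tree-like equivalence, whereas the lemma claims a pointwise-in-$t$ equality of paths. Moreover that route forces you into exactly the bookkeeping you flag (a $\Delta_{\conc}$-vs-$\succ$ compatibility, induction on word length, empty-word boundary terms), none of which is needed. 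The reason it is not needed is that $\Lambda_B^*Z$ is \emph{defined} coordinate-by-coordinate and time-by-time as $(\Lambda_B^*Z)_t^{[i]}=\langle\sigma(Z)_t,B(\word{i})\rangle$; the recursive $\Lambda_B$ on all of $T(\R^t)$ never enters. The paper's proof simply evaluates this expression for $Z=X\sqcup Y$ in the two regimes $t\in[0,T_1]$ and $t\in[T_1,T_1+T_2]$, applies Chen's identity $\sigma(X\sqcup Y)_t=\sigma(X)\conc\sigma(Y)_{t-T_1}$ in the second regime, splits off the two primitive terms, and reads off that the remainder is precisely $\langle\sigma(Y)_{t-T_1},\delta_X B(\word{i})\rangle$. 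No induction, no halfshuffle identity, no coproduct–halfshuffle formula.

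You do gesture at this at the very end (``split at the concatenation time, which automatically handles the two regimes'') and that is indeed the right idea, but you still frame it as driving an induction via the halfshuffle identity, which overcomplicates a one-line pointwise computation. I would also flag that your explanation of the $B\colon\R^t\to\R^n$ vs.\ $B\colon\R^t\to T(\R^n)$ issue is a bit muddled: if $B(\word{i})$ really is a single letter, the reduced coproduct $\sum_{(B(\word{i}))}^{\conc}\langle\sigma(X),w'\rangle\,w''$ vanishes and $\delta_X B=B$, making the lemma trivial. The correction term only bites, and the proof only makes sense as written, when $B$ takes values in $T^{\geq 1}(\R^n)$ of degree $\geq 2$ — which is what the proof in the paper tacitly uses. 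Recognizing that $\delta_X B$ is exactly the reduced-coproduct correction produced by Chen's identity is the whole content here, and it drops out of the direct pointwise computation without any of the algebraic machinery you were reaching for.
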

\begin{proof}
Let the time domain of $X$ be $[0,T_1]$ and that of $Y$ be $[0,T_2]$.
For $t\in[0,T_1]$,
\begin{equation*}
 \Lambda_B^*(X\sqcup Y)_t^{[i]}
 =\langle\sigma(X\sqcup Y)_t,B(\word{i})\rangle
 =\langle\sigma(X)_t,B(\word{i})\rangle
 =(\Lambda_B^*X)_{t}^{[i]}
 =(\Lambda_B^* X\sqcup \Lambda_{\delta_X B}^* Y)_t^{[i]}.
\end{equation*}

For $t\in[T_1,T_1+T_2]$, we have
\begin{align*}
 \Lambda_B^*(X\sqcup Y)_t^{[i]}
 &=\langle\sigma(X\sqcup Y)_t,B(\word{i})\rangle
 =\sum_{(w=B(\word{i}))}^{\conc}\langle\sigma(X),w_1\rangle\langle\sigma(Y)_{t-T_1},w_2\rangle
 \\&=\langle\sigma(X),B(\word{i})\rangle+\langle\sigma(Y)_{t-T_1},B(\word{i})\rangle+\sum_{(w=B(\word{i}))}^{\conc}\langle\sigma(X),w'\rangle\langle\sigma(Y)_{t-T_1},w''\rangle\\
 &=\langle\sigma(X),B(\word{i})\rangle+\langle\sigma(Y)_{t-T_1},B(\word{i})\rangle+\sum_{(w=B(\word{i}))}^{\conc}\langle\sigma(X),w'\rangle\langle\sigma(Y)_{t-T_1},w''\rangle\\
 &=\langle\sigma(X),B(\word{i})\rangle+\langle\sigma(Y)_{t-T_1},\delta_X B(\word{i})\rangle
 =(\Lambda_B^*X)_{T_1}^{[i]}+(\Lambda_{\delta_X B}^* Y)_{t-T_1}^{[i]}
 \\&=(\Lambda_B^* X\sqcup \Lambda_{\delta_X B}^* Y)_t^{[i]}.
\end{align*}
\end{proof}

As a summary of everything we did in this subsection,
we present the largest class of regular maps that we can describe so far.

\begin{theorem}
 Let $F:\,\varltwoz{\R^d}\to\varltwoz{\R^t}$ be of the form
 \begin{equation*}
  X\mapsto \Lambda^*_{A_1(X)}Y_1\sqcup\Psi_1(X)\sqcup\Lambda^*_{A_2(X)}Y_2\sqcup\Psi_2(X)\sqcup\dots\sqcup\Psi_k(X)\sqcup\Lambda^*_{A_{k+1}(X)}Y_{k+1},
 \end{equation*}
 where $k\in\mathbb{N}_0$, $Y_i\in\varltwoz{\R^{n_i}}$, $A_i:\varltwoz{\R^d}\to L(\R^t,\R^{n_i})$ a regular map, $\Psi_i\in\Upsilon(d,t)$.

 Then $F$ is a regular map.
 We denote by $\Gamma(d,t)$ the class of all functions of this form.
 Then $\Gamma(d,t)\circ\Gamma(t,e)\subseteq\Gamma(d,e)$.
\end{theorem}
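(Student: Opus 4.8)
The plan is to verify the two claims separately: first that every $F$ of the displayed form is a regular map, then that $\Gamma(d,t)\circ\Gamma(t,e)\subseteq\Gamma(d,e)$.

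For the first claim, the strategy is to assemble $F$ from the building blocks already established in the subsection. By Lemma~\ref{lem:Lambda_AY}, each $X\mapsto\Lambda^*_{A_i(X)}Y_i$ is a regular map $\varltwoz{\R^d}\to\varltwoz{\R^{t}}$ (note the target is $\R^t$ since $A_i$ lands in $L(\R^t,\R^{n_i})$). By definition each $\Psi_i\in\Upsilon(d,t)$ is a regular map, being a composition of the regular maps $\antipode$ and $\Lambda^*_{B(\cdot)}$ from Lemma~\ref{lem:Lambda_BX}. The full map $F$ is then the concatenation of these $2k+1$ regular maps, so by repeated application of Lemma~\ref{lem:FsqcupG} (and the fact, noted after Proposition~\ref{prop:regfunc}, that compositions of regular maps are regular), $F$ is a regular map. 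It is worth spelling out explicitly the shuffle homomorphism $H$ witnessing regularity of $F$: if $H_1,\dots,H_{2k+1}$ are the shuffle homomorphisms of the factors, then by Chen's identity and the shuffle relation
\begin{equation*}
 \langle\sigma(F(X)),w\rangle=\Big\langle\sigma(X),\sum^{\conc(2k+1)}_{(w)}H_1 w_1\shuffle H_2 w_2\shuffle\cdots\shuffle H_{2k+1}w_{2k+1}\Big\rangle,
\end{equation*}
so $H w:=\sum^{\conc(2k+1)}_{(w)}H_1 w_1\shuffle\cdots\shuffle H_{2k+1}w_{2k+1}$ does the job.

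For the composition closure, suppose $F\in\Gamma(d,t)$ and $G\in\Gamma(t,e)$, written in the displayed normal form with pieces $\Lambda^*_{A_i(X)}Y_i$, $\Psi_i$ and $\Lambda^*_{A'_j(Z)}Y'_j$, $\Psi'_j$ respectively. The idea is to expand $G(F(X))$ by pushing $G$'s normal form through the concatenation structure of $F(X)$. Concretely, $G(F(X))$ is a concatenation of blocks of the form $\Lambda^*_{A'_j(F(X))}Y'_j$ and $\Psi'_j(F(X))$. The block $\Psi'_j(F(X))$ is a composition $\Psi'_j\circ F$; since $\Psi'_j\in\Upsilon(t,e)$ and $F\in\Gamma(d,t)$, and since $\Upsilon$ is itself built from $\Lambda^*_{B(\cdot)}$ and $\antipode$, one shows $\Psi'_j\circ F$ again lies in $\Gamma(d,e)$ — here one repeatedly uses Lemma~\ref{lem:lambdasplit} to distribute each $\Lambda^*_{B(\cdot)}$ over the concatenation blocks of $F(X)$, turning $\Lambda^*_B(\text{block}_1\sqcup\text{block}_2)$ into $\Lambda^*_B(\text{block}_1)\sqcup\Lambda^*_{\delta B}(\text{block}_2)$ with the modified $\delta_{(\cdot)}B$ still a regular map in $X$. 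The block $A'_j(F(X))=A'_j\circ F$ is a regular map $\varltwoz{\R^d}\to L(\R^e,\R^{n'_j})$ because $A'_j$ is a regular map into a vector space and $F$ is a regular path map, so $\Lambda^*_{A'_j(F(X))}Y'_j$ is already in the desired form by Lemma~\ref{lem:Lambda_AY}. Concatenating all the resulting $\Gamma(d,e)$-blocks and collapsing via $\Gamma(d,e)$'s own stability under concatenation (first claim) gives $G\circ F\in\Gamma(d,e)$.

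The main obstacle I expect is the bookkeeping in the composition step, specifically handling $\Psi'_j\circ F$ where $\Psi'_j$ is a \emph{nontrivial} element of $\Upsilon(t,e)$: one must track how a chain $\antipode^{s_1}\Lambda^*_{B_m}\antipode\cdots\antipode\Lambda^*_{B_1}\antipode^{s_0}$ acts on a concatenation $F(X)=\bigsqcup_i(\cdots)$, using that $\antipode$ reverses concatenation order ($\antipode(U\sqcup V)$ corresponds to $\overleftarrow V\sqcup\overleftarrow U$) and that each $\Lambda^*_{B_\ell}$ splits via Lemma~\ref{lem:lambdasplit} at the cost of replacing $B_\ell$ by an $X$-dependent regular map. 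The number of concatenation blocks grows but stays finite, and all the coefficient maps remain regular; making this precise (perhaps by an induction on the length $m$ of the $\Upsilon$-chain, or simply by invoking $\Upsilon(t,e)\circ\Gamma(d,t)\subseteq\Gamma(d,e)$ as an auxiliary lemma proved the same way) is the technical heart, but involves no new ideas beyond Lemmas~\ref{lem:Lambda_BX}--\ref{lem:lambdasplit}.
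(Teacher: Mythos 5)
Your proposal takes essentially the same route as the paper: regularity of $F$ from Lemmata~\ref{lem:Lambda_AY}, \ref{lem:Lambda_BX} and \ref{lem:FsqcupG} (and your explicit formula for $H$ is a clean way to package the iterated application of Lemma~\ref{lem:FsqcupG}), and composition closure by pushing the $\Upsilon$-chain of $G$ through the concatenation blocks of $F$ using Lemma~\ref{lem:lambdasplit}, the reversal rule $\antipode(U\sqcup V)=\antipode V\sqcup\antipode U$, $\antipode\Lambda^*_{A(X)}=\Lambda^*_{A(X)}\antipode$, and regularity of $A'_j\circ F$ and $B\circ F$. The one spot that is slightly off is the parenthetical justification for the very last step: you write that you ``collapse via $\Gamma(d,e)$'s own stability under concatenation (first claim)'', but the first claim is only that each $F\in\Gamma(d,t)$ is a regular map, which does not by itself show that a concatenation $F_1(X)\sqcup F_2(X)$ of two $\Gamma(d,e)$-maps is again of the stipulated normal form. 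What actually makes $\Gamma$ closed under concatenation is the merge rule that two adjacent $\Lambda^*_A Y$-blocks coalesce, $\Lambda^*_{A_1(X)}Y\sqcup\Lambda^*_{A_2(X)}Z=\Lambda^*_{\tilde A_1(X)+\tilde A_2(X)}(\tilde Y,\tilde Z)$ (with $\tilde A_i$, $\tilde Y$, $\tilde Z$ the obvious block-diagonal and time-shifted extensions), which the paper lists as a separate ingredient. You also do not explicitly note that a composite $\Lambda^*_{B(X)}\Lambda^*_{A(X)}Y$ reduces to a single $\Lambda^*_{C(X)}Z$-block, which is needed when an inner block of $\Psi'_j$ meets one of the $\Lambda^*_{A_i}Y_i$-blocks of $F$; both facts are small but genuinely required to close the bookkeeping you correctly flag as the technical heart.
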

\begin{proof}
 $\Gamma$ being a regular map follows directly from the Lemmata \ref{lem:FsqcupG}, \ref{lem:Lambda_BX} and \ref{lem:Lambda_AY}.

 Closedness under composition follows from
 \begin{enumerate}
  \item $\antipode(X\sqcup Y)=\antipode Y\sqcup \antipode X$,
  \item $\Upsilon$ being closed under composition,
  \item $\Lambda^*_{B(X)}(F(X)\sqcup G(X))=\Lambda^*_
  {B(X)}(F(X))\sqcup \Lambda^*_{\delta_XB(X)}(G(X))$ by Lemma
  \ref{lem:lambdasplit},
  and $X\mapsto \delta_XB(X)$ being once again a regular map,
  \item $B\circ F:\varltwoz{\R^d}\to L(\R^e,T(\R^t))$ being a regular map for any regular map $F:\varltwoz{\R^d}\to\varltwoz{\R^t}$ and any regular map $B:\varltwoz{\R^t}\to L(\R^e,T(\R^t))$,
  \item $\antipode\Lambda_{A(X)}^*=\Lambda_{A(X)}^*\antipode$ for all regular maps $A:\varltwoz{\R^d}\to L(\R^t,\R^{n_i})$
  \item $\Lambda_{A_1(X)}Y\sqcup\Lambda_{A_2(X)}Z=\Lambda_{\tilde{A}_1(X)+\tilde{A}_2(X)}(\tilde{Y},\tilde{Z})$,
  where if $Y:\,[0,T_1]\to\R^n$ and $Z:\,[0,T_2]\to\R^m$,
  we put $\tilde{A}_1(X)(v_1,v_2):=A_1(X)(v_1)$ and $\tilde{A}_2(X)(v_1,v_2):=A_2(X)(v_2)$ for $v_1\in\R^n$ and $v_2\in\R^m$, as well as $\tilde{Y}_t=Y_t$ and $\tilde{Z}_t=0$ for $t\in[0,T_1]$
  as well as $\tilde{Y}_t=Y_T$ and $\tilde{Z}_t=Z_{t-T_1}-Z_0$ for $t\in[T_1,T_1+T_2]$,
  \item It is easy to see that for regular maps $A:\varltwoz{\R^d}\to L(\R^t,\R^{n})$ and $B:\varltwoz{\R^d}\to L(\R^e,T(\R^t))$,
  we have that $\varltwoz{\R^d}\to L(\R^e,T(\R^n)),\,X\mapsto \Lambda_{A(X)}B(X)$ is a regular map again,
  and so there are $x_{ij}\in T(\R^d)$, $v_{ij}\in T(\R^n)$ with
  $\Lambda_{A(X)}B(X)(\word{i})=\sum_j\langle\sigma(X),x_{ij}\rangle \,v_{ij}$
  and thus
  \begin{align*}
   (\Lambda_{B(X)}^*\Lambda_{A(X)}^*Y)^{[i]}_t
   &=\langle\sigma(\Lambda_{A(X)}^*Y),B(X)(\word{i})\rangle
   =\langle\sigma(Y),\Lambda_{A(X)}B(X)(\word{i})\rangle
   \\&=\sum_{j}\langle\sigma(X),x_{ij}\rangle\langle\sigma(Y),v_{ij}\rangle
   =(\Lambda_{C(X)}^*Z)_t^{[i]}
   \end{align*}
   with $C(X)(\word{i})=\sum_{j}\langle\sigma(X),x_{ij}\rangle \word{(i,j)}$
   and
   $Z^{[(i,j)]}_t=\langle\sigma(Y),v_{ij}\rangle$.
  \end{enumerate}
\end{proof}

Note that this includes the identity through choosing $k=1$, $Y_1=Y_2=0$ and $\Psi_1=\id$,
as well as all $X\mapsto \Lambda^*_{A(X)}Y$ through $k=0$ and $\Psi_1(X)=0$.
Once again, $((\varltwoz{\R^d})_d,\Gamma)$ is a subcategory of the small category of $((\varltwoz{\R^d})_d$ with regular maps,
and a supercategory of $((\varltwoz{\R^d})_d,\Upsilon)$.

\begin{remark}
 If we are looking at bounded variation paths instead, we simply restrict the $Y_i$ to be of bounded variation.

 For piecewise linear paths, we have to restrict the $Y_i$ to be piecewise linear, and furthermore restrict the $B_i$ of the functions in $\Upsilon$ to only map to $L(\R^t,\mathscr{A}(\R^d))$,
 where $\mathscr{A}(\R^d)\subset T(\R^d)$ denotes the iterated areas,
 see \cite[Theorem~5.3]{DLPR20}.

 Finally, if we are looking at the space of all rough paths of arbitrarily low $p$-variation,
 then it makes sense to include the translation maps $T_v$ introduced in \cite{brunedchevyrevfrizpreiss19} into the definition of $\Upsilon$.
\end{remark}

\section{Generalized point-varieties}
\label{sec:generalized}
If we look at the halfshuffle ideal generated by $\word{1}-\word{2}-\word{12}$, this means that the equation $x(t)=\int_0^t(x(s)+1)dy(s)$ is satisfied along all reduced paths $(x,y)$ with $x(0)=y(0)=0$ in the corresponding path variety. Similar to the standard trick of deriving $f(t)e^{-t}$ to proof that $f(t)=f'(t)$ has the general solution $f(t)=ce^{-t}$,
we have
\begin{align*}
 (x(t)+1)e^{-y(t)}&=\int_0^t(x(s)+1)de^{-y(s)}+\int_0^t e^{-y(s)}dx(s)+(x(0)+1)e^{-y(0)}\\
 &=-\int_0^t(x(s)+1)e^{-y(s)}dy(s)+\int_0^t e^{-y(s)}(x(s)+1)dy(s)+1=1,
\end{align*}
where the first equality uses integration by parts,
while the second equality uses  $\int_0^t g'(h(s))dh(s)=g(h(s))-g(h(0))$ for $g(r)=e^{-r}, h(s)=y(s)$ and \eqref{eq:anazinbiel}.
Thus we see that $x=e^y-1$ is the unique solution to the initial value problem with $x(0)=0, y(0)=0$, meaning all paths with reduced paths lying in $\{(x,y)\in\mathbb{R}^2|x=e^y-1\}$ form the corresponding path variety. However, it is well known that the graph of the exponential function is not an algebraic curve.

Similarly, the halfshuffle ideal generated by $\word{1}-\word{2}-\word{12}-\word{32}$ corresponds to the IVP $x(t)=\int_0^t(x(s)+z(s)+1)dy(s)$, $y(0)=z(0)=0$ with unique solution $x=e^y-1-z$, meaning all paths with reduced paths lying in $M_2=\{(x,y,z)\in\mathbb{R}^2|x=e^y-1-z\}$ form the corresponding path variety. Again, $M_2$ is not an algebraic surface.

\begin{theorem}
Let $M$ be $\varltwo{\R^d}$ connected. If $\pathsin{M}$ is a path variety, then so is $\pathsin{M-p}$ for any $p\in M$.
\end{theorem}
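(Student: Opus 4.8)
The plan is to realize $\pathsin{M-p}$ as a concatenate $\overleftarrow{\gamma}\sqcup\pathsin{M}$ for one fixed path $\gamma$, and then simply invoke Theorem~\ref{thm:path_conc_variety}, which already says that concatenating a fixed path onto a path variety yields a path variety. We may assume $0\in M$: if $0\notin M$ then $\pathsin{M}=\emptyset$, and if $\pathsin{M}$ is a nonempty path variety then its defining property (reduced paths starting in $0$ and staying in $M$) forces $0\in M$. Since $M$ is $\varltwo{\R^d}$ connected and $0,p\in M$, fix $\gamma\in\varltwo{\R^d}$ with image contained in $M$, with $\gamma_0=0$ and with endpoint $p$; then $\overleftarrow{\gamma}\in\varltwo{\R^d}$ as well. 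The whole argument then reduces to the set identity $\pathsin{M-p}=\overleftarrow{\gamma}\sqcup\pathsin{M}$, whose right-hand side is a path variety by Theorem~\ref{thm:path_conc_variety}.

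For ``$\subseteq$'' I would take $X\in\pathsin{M}$, so $X-X_0$ is tree-like equivalent to a $\varltwo{\R^d}$ path $Z$ with image in $M$ and $Z_0=0$. Then $\overleftarrow{\gamma}\sqcup X=\overleftarrow{\gamma}\sqcup(X-X_0)$ is tree-like equivalent to $\overleftarrow{\gamma}\sqcup Z$ (using $\overline{\{C\sqcup A\}}=C\sqcup\overline{\{A\}}$ from Section~\ref{sec:conc}), and $\overleftarrow{\gamma}\sqcup Z$ is literally ``$\overleftarrow{\gamma}$ followed by $Z$'', so its image lies in $M$ and it starts at $p$; subtracting $p$ shows $\overleftarrow{\gamma}\sqcup X\in\pathsin{M-p}$. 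For ``$\supseteq$'' I would first record, using that path varieties are closed under tree-like equivalence together with the fact that $\gamma\sqcup\overleftarrow{\gamma}$ and $\overleftarrow{\gamma}\sqcup\gamma$ are tree-like, the equivalence $Y\in\overleftarrow{\gamma}\sqcup\pathsin{M}\iff\gamma\sqcup Y\in\pathsin{M}$. Given $Y\in\pathsin{M-p}$, writing $Y-Y_0$ as tree-like equivalent to a $\varltwo{\R^d}$ path $W$ with image in $M-p$ and $W_0=0$, one gets that $\gamma\sqcup Y=\gamma\sqcup(Y-Y_0)$ is tree-like equivalent to $\gamma\sqcup W$, which is ``$\gamma$ followed by $p+W$'', hence has image in $M$ and starts at $0$, so lies in $\pathsin{M}$; thus $Y\in\overleftarrow{\gamma}\sqcup\pathsin{M}$.

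The only non-bookkeeping difficulty, and the point to be careful about, is the tree-like-equivalence layer: membership in $\pathsin{M}$ refers to the \emph{reduced} path, not to the path itself, so one must move consistently between a path $X$, its translate $X-X_0$, and a genuinely $M$-valued tree-like-equivalent representative, repeatedly using translation invariance of tree-likeness, the identities $\overline{\{A\sqcup B\}}=\overline{\{A\}}\sqcup\overline{\{B\}}$ from Section~\ref{sec:conc}, and (to reconcile the two descriptions of $\pathsin{M}$) the elementary fact that the image of a reduced path is contained in the image of the path. This is also precisely where the hypothesis enters: $\varltwo{\R^d}$ connectedness is exactly what supplies a joining path $\gamma$ lying \emph{inside} $M$ and of finite $p$-variation, which is what makes $\overleftarrow{\gamma}\sqcup\pathsin{M}$ both well-defined and, by Theorem~\ref{thm:path_conc_variety}, a path variety.
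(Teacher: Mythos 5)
Your proposal is correct and takes essentially the same route as the paper: the paper also fixes a path $X\in\pathsin{M}$ ending at $p$ (supplied by $\varltwo{\R^d}$ connectedness) and characterizes $\pathsin{M-p}$ as $\{Y : X\sqcup Y\in\pathsin{M}\}$, which is exactly your identity $\pathsin{M-p}=\overleftarrow{\gamma}\sqcup\pathsin{M}$, the only difference being that the paper re-runs the Chen-identity computation explicitly instead of citing Theorem \ref{thm:path_conc_variety} as you do. Your extra care with the tree-like-equivalence bookkeeping and the implicit assumption $0\in M$ is left tacit in the paper's proof but reflects the same underlying argument.
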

\begin{proof}
 Let $X\in\pathsin{M}$ with endpoint $p$ be arbitrary. Then $X\sqcup Y\in\pathsin{M}$ if and only if $Y\in\pathsin{M-p}$.

 Now $X\sqcup Y\in\pathsin{M}$ if and only if
 \begin{align*}
  0&=\langle\sigma(X\sqcup Y),x\rangle=\langle\sigma(X)\conc\sigma(Y),x\rangle=\langle\sigma(X)\otimes\sigma(Y),\Delta_{\conc} x\rangle=\sum_{(x)}^{\conc}\langle\sigma(X),w_1\rangle\langle\sigma(Y),w_2\rangle\\
  &=\langle\sigma(Y),\sum_{(x)}^{\conc}\langle\sigma(X),w_1\rangle w_2\rangle
 \end{align*}
 for all $x\in\mathcal{I}(\pathsin{M})$.
 Thus $\pathsin{M-p}=\mathcal{V}(\{\sum_{(x)}^{\conc}\langle\sigma(X),w_1\rangle w_2|x\in\mathcal{I}(\pathsin{M})\})$.

\end{proof}

Motivated by the examples and the theorem, we have the following definition.
\begin{definition}
$M\subseteq \mathbb{R}^d$ is called a proper generalized point variety
if it is $\varltwo{\R^d}$ connected
and for $p\in M$ the set $\pathsin{M-p}$ of all paths with reduced paths lying in $M-p$ is a path variety.
$M\subseteq \mathbb{R}^d$ is called an irreducible generalized point variety
if it is a proper generalized point variety
and there are no proper subsets $M_1, M_2$ of $M$ such that $M=M_1\cup M_2$ and $\pathsin{M_1-p_1}$ as well as $\pathsin{M_2-p_2}$ are both path varieties for $p_1\in M_1$, $p_2\in M_2$.
\end{definition}

By Corollary \ref{cor:pathsinvariety}, if $M$ is a point variety, then $M$ is a proper generalized point variety if and only if $M$ is path connected,
and $M$ is an irreducible generalized point variety if and only if $M$ is an irreducible variety.

The tricky part so far is that we don't know whether for $M_1, M_2$ being proper generalized point varieties such that $M_1\cap M_2\neq \emptyset$,
we have that $M_1\cup M_2$ is again a proper generalized point variety,
i.e. $\pathsin{M_1\cup M_2-p}$ is a path variety for $p\in M_1\cup M_2$.

Furthermore, it is still open whether an arbitrary intersection of proper generalized point varieties can have infinitely many $\varltwo{\R^d}$ connected components.

However, we have some stability of the concept under arbitrary intersection in the following way.

\begin{theorem}
 If $(M_i)_{i\in I}$ is a family of proper generalized point varieties, where $I$ may be of arbitrary cardinality,
 then any $\varltwo{\R^d}$ connected component of $\bigcap_{i\in I} M_i$ is a proper generalized point variety.
\end{theorem}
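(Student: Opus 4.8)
The plan is to fix a $\varltwo{\R^d}$ connected component $N$ of $\bigcap_{i\in I}M_i$ and a point $p\in N$ (such an $N$ is nonempty, being an equivalence class; and if $\bigcap_{i\in I}M_i=\emptyset$ there is nothing to prove), and to establish the set identity
\begin{equation*}
 \pathsin{N-p}=\bigcap_{i\in I}\pathsin{M_i-p}.
\end{equation*}
Once this identity is in hand the theorem is immediate: $N$ is $\varltwo{\R^d}$ connected by construction; for every $i\in I$ we have $p\in N\subseteq M_i$, so since $M_i$ is a proper generalized point variety the set $\pathsin{M_i-p}$ is a path variety; and path varieties are, by definition, exactly the closed sets of the path Zariski topology, hence stable under arbitrary intersection. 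Therefore $\pathsin{N-p}$ is a path variety, and $N$ is a proper generalized point variety.

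The inclusion $\pathsin{N-p}\subseteq\bigcap_{i\in I}\pathsin{M_i-p}$ is the easy direction. If $X\in\pathsin{N-p}$, then the reduced path $\check X$, taken to start at $0$, has image contained in $N-p$; since $N-p\subseteq\bigl(\bigcap_{i\in I}M_i\bigr)-p=\bigcap_{i\in I}(M_i-p)$, that same reduced path lies in each $M_i-p$, i.e.\ $X\in\pathsin{M_i-p}$ for all $i$.

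For the reverse inclusion, which is where the structure of connected components enters, let $X\in\bigcap_{i\in I}\pathsin{M_i-p}$. Then $\check X$, taken to start at $0$, has image contained in $\bigcap_{i\in I}(M_i-p)$, so the translate $\check X+p$ is a $\varltwo{\R^d}$ path that starts at $p$ and has image inside $\bigcap_{i\in I}M_i$. I would then invoke that $\varltwo{\R^d}$ connectedness is an equivalence relation whose classes are precisely the $\varltwo{\R^d}$ connected components --- reflexivity via constant paths, symmetry via time reversal $Y\mapsto\overleftarrow{Y}$, transitivity via concatenation $\sqcup$, all of which preserve membership in $\varltwo{\R^d}$ --- together with the fact that stopping a $\varltwo{\R^d}$ path at an earlier time again gives a $\varltwo{\R^d}$ path. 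Concretely, $(\check X+p)_0=p\in N$, and for every $t>0$ in the time domain of $\check X$ the restriction $(\check X+p)\prestr{[0,t]}$ is a $\varltwo{\R^d}$ path inside $\bigcap_{i\in I}M_i$ joining $p$ to $(\check X+p)_t$, so $(\check X+p)_t$ lies in the connected component $N$ of $p$. Hence $\im(\check X+p)\subseteq N$, that is $\im\check X\subseteq N-p$, so $X\in\pathsin{N-p}$.

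The main obstacle --- modest though it is --- is making this last step fully rigorous: one must check that $\varltwo{\R^d}$ connectedness behaves well under stopping of paths, and that the reduced path of a $\varltwo{\R^d}$ path is itself a $\varltwo{\R^d}$ path, so that reduced paths may legitimately serve as connecting paths. Both are standard facts about $\varltwo{\R^d}$ paths that are already built into the framework of Section~\ref{sec:generalized} and into the very definition of $\pathsin{\cdot}$; beyond this, only the bookkeeping above is needed.
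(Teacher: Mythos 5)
Your proposal is correct and follows essentially the same route as the paper: the paper's proof is exactly the chain $\pathsin{C-p}=\pathsin{(\bigcap_{i\in I} M_i)-p}=\bigcap_{i\in I}\pathsin{M_i-p}$ together with stability of path varieties under arbitrary intersection. Your two-inclusion argument merely spells out the first equality (a path in $\bigcap_i M_i$ started at $p$ stays in the $\varltwo{\R^d}$ connected component of $p$), which the paper leaves implicit.
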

\begin{proof}
 Let $C$ be an arbitrary $\varltwo{\R^d}$ connected component of $\bigcap_{i\in I} M_i$ and $p\in C$. Then
 \begin{equation*}
  \pathsin{C-p}=\pathsin{(\bigcap_{i\in I} M_i)-p}=\pathsin{\bigcap_{i\in I} (M_i-p)}=\bigcap_{i\in I} \pathsin{M_i-p},
 \end{equation*}
 and so a path variety since the family of path varieties is closed under arbitrary intersection.

\end{proof}

\section{Outlook}
\subsection{Coordinate rings}
For a given path ideal $V$, $\mathcal{I}(V)$ is a shuffle ideal, so the quotient $\R[V]:=T(\R^d)/\mathcal{I}(V)$ is once again a commutative algebra,
and analogous to affine point varieties we call $\R[V]$ the coordinate ring of $V$.

Now, if $V$ contains left subpaths, then $\mathcal{I}(V)$ is a two-sided $\succ$-ideal, so $\R[V]$ is even a Zinbiel algebra again.

If $V$ is stable under concatenation, then $\mathcal{I}(V)$ is a Hopf ideal, and then $\R[V]$ is a Hopf algebra.

Note that as an important difference to affine point varieties,
to understand the geometrical structure of $V$, we need the algebraic structure of $\R[V]$ \emph{plus} the $2^-$ radical operator on the power set of $\R[V]$, as we do not have a Nullstellensatz yet.

\subsection{Invariantization of varieties}
Let $G$ be a group acting on the path space such that there is a transpose linear action on the tensor algebra with $\langle\sigma(AX),x\rangle=\langle\sigma(X),A^\top x\rangle$ for all paths $X$ and $x\in T(\mathbb{R}^d)$.

For a path variety $V$, let $V^G$ be the set of paths $X$ such that there is $A\in G$ with $AX\in V$.

For an ideal $I$, let $I^G:=\{x\in I|A^\top x\in I\forall A\in G\}$.
Futhermore, put $I_G:=\{x\in T(\mathbb{R}^d)|\exists A\in G:\,A^\top x\in I\}$.
\begin{lemma}
 $\mathcal{I}(V^G)=\mathcal{I}(V)^G$ and $\overline{\mathcal{V}(I)^G}=\mathcal{V}(I^G)$ and $\mathcal{V}(I)_G=\mathcal{V}(I_G)$.
\end{lemma}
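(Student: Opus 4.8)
The plan is to reduce everything to two ``dictionary'' identities comparing the action of $G$ on paths with the transpose action $A^{\top}$ on $T(\R^{d})$. For $A\in G$ the map $X\mapsto AX$ is a bijection of $\varltwo{\R^{d}}$ carrying signatures to signatures, so $A^{\top}$ is a shuffle automorphism of $T(\R^{d})$ (by the proposition on regular maps), and from $\langle\sigma(AX),x\rangle=\langle\sigma(X),A^{\top}x\rangle$ one reads off, for every $W\subseteq T(\R^{d})$ and every path variety $V$,
\[
 A^{-1}\mathcal{V}(W)=\mathcal{V}(A^{\top}W),\qquad \mathcal{I}(A^{-1}V)=A^{\top}\mathcal{I}(V),
\]
with $A^{\top}W:=\{A^{\top}x\mid x\in W\}$. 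Because $G$ is a \emph{group}, $\{A^{\top}\mid A\in G\}=\{(A^{\top})^{-1}\mid A\in G\}$, hence $\bigcap_{A}A^{\top}W=\{x\mid A^{\top}x\in W\ \forall A\}$ and $\bigcup_{A}A^{\top}W=\{x\mid\exists A:\,A^{\top}x\in W\}$. Finally, $V^{G}=\bigcup_{A}A^{-1}V$, and I read $\mathcal{V}(I)_{G}$ as the dual core $\bigcap_{A}A^{-1}\mathcal{V}(I)$.

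The outer two identities are then formal. For the first, $\mathcal{I}(V^{G})=\mathcal{I}\bigl(\bigcup_{A}A^{-1}V\bigr)=\bigcap_{A}\mathcal{I}(A^{-1}V)=\bigcap_{A}A^{\top}\mathcal{I}(V)=\{x\mid A^{\top}x\in\mathcal{I}(V)\ \forall A\}=\mathcal{I}(V)^{G}$. For the third, $\mathcal{V}(I)_{G}=\bigcap_{A}A^{-1}\mathcal{V}(I)=\bigcap_{A}\mathcal{V}(A^{\top}I)=\mathcal{V}\bigl(\bigcup_{A}A^{\top}I\bigr)=\mathcal{V}(I_{G})$, using that $\mathcal{V}$ (resp.\ $\mathcal{I}$) turns unions of subsets of $T(\R^{d})$ into intersections of varieties (resp.\ of ideals); the group property enters only to reindex $A\leftrightarrow A^{-1}$.

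For the middle identity, the inclusion $\overline{\mathcal{V}(I)^{G}}\subseteq\mathcal{V}(I^{G})$ follows from the first identity: $I\subseteq\mathcal{I}(\mathcal{V}(I))$ and $(\cdot)^{G}$ is monotone, so $I^{G}\subseteq\mathcal{I}(\mathcal{V}(I))^{G}=\mathcal{I}(\mathcal{V}(I)^{G})$, whence $\overline{\mathcal{V}(I)^{G}}=\mathcal{V}(\mathcal{I}(\mathcal{V}(I)^{G}))\subseteq\mathcal{V}(I^{G})$. For the reverse inclusion I would invoke that the ideal of a path variety is $2^{-}\text{-var}$ radical: writing $I=\mathcal{I}(M)$ (the relevant case, $I$ being the ideal of $\mathcal{V}(I)$), the dictionary gives $A^{\top}I=\mathcal{I}(A^{-1}\mathcal{V}(I))$, which is again of the form $\mathcal{I}(\,\cdot\,)$ and so $2^{-}\text{-var}$ radical, i.e.\ $A^{\top}I=\mathcal{I}(\mathcal{V}(A^{\top}I))$. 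Therefore
\[
 I^{G}=\bigcap_{A}A^{\top}I=\bigcap_{A}\mathcal{I}\bigl(\mathcal{V}(A^{\top}I)\bigr)=\mathcal{I}\Bigl(\bigcup_{A}\mathcal{V}(A^{\top}I)\Bigr)=\mathcal{I}\Bigl(\bigcup_{A}A^{-1}\mathcal{V}(I)\Bigr)=\mathcal{I}(\mathcal{V}(I)^{G}),
\]
and applying $\mathcal{V}$ gives $\mathcal{V}(I^{G})=\mathcal{V}(\mathcal{I}(\mathcal{V}(I)^{G}))=\overline{\mathcal{V}(I)^{G}}$.

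The main obstacle is exactly this last step when $I$ is \emph{not} assumed radical: then $\mathcal{V}(I^{G})$ may differ from $\mathcal{V}(\mathcal{I}(\mathcal{V}(I))^{G})=\overline{\mathcal{V}(I)^{G}}$ unless the $2^{-}\text{-var}$ radical commutes with the intersection $\bigcap_{A}A^{\top}(\cdot)$. When $G$ is finite this causes no trouble: from $z_{1}\shuffle z_{2}\in\mathscr{I}_{\shuffle}(z_{1})\cap\mathscr{I}_{\shuffle}(z_{2})$ one gets $\mathcal{V}(J_{1}\cap J_{2})=\mathcal{V}(J_{1})\cup\mathcal{V}(J_{2})$, and for any $X\notin\mathcal{V}(I)^{G}$ the finite shuffle product $w=\shuffle_{A\in G}z_{A}$, with $z_{A}\in A^{\top}I$ and $\langle\sigma(X),z_{A}\rangle\neq0$, lies in $I^{G}$ (for each $B$ the factor $(B^{-1})^{\top}z_{B}$ of $(B^{-1})^{\top}w$ already lies in $I$) yet is not annihilated by $\sigma(X)$, giving $\mathcal{V}(I^{G})\subseteq\mathcal{V}(I)^{G}$ directly. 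For an infinite $G$ this finite-product trick breaks down, so the safe reading of the statement is for $2^{-}\text{-var}$ radical shuffle ideals $I$ — which are precisely the ideals attached to path varieties — where the computation of the previous paragraph applies unchanged; I expect that to be the intended proof.
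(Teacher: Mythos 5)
Your proposal is correct and, for the first and third identities, runs along essentially the same lines as the paper: the paper argues by double inclusion, using that $\mathcal{I}(V)^G$ is the largest $G^{\top}$-stable subset of $\mathcal{I}(V)$ and that $V^G$ is the orbit of $V$, which is just a less formal version of your dictionary $A^{-1}\mathcal{V}(W)=\mathcal{V}(A^{\top}W)$, $\mathcal{I}(A^{-1}V)=A^{\top}\mathcal{I}(V)$ together with reindexing over the group; your reading of the (not explicitly defined) set $\mathcal{V}(I)_G$ as $\bigcap_{A}A^{-1}\mathcal{V}(I)$ is exactly what the paper's computation uses. The interesting divergence is the middle identity. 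The paper obtains $\overline{\mathcal{V}(I)^G}\subseteq\mathcal{V}(I^G)$ by noting that $\mathcal{V}(I^G)$ is a $G$-stable variety containing $\mathcal{V}(I)$, and for the reverse inclusion writes $\mathcal{I}(\mathcal{V}(I)^G)=\mathcal{I}(\mathcal{V}(I))^G\subseteq I^G$ --- a step that tacitly uses $\mathcal{I}(\mathcal{V}(I))\subseteq I$, i.e.\ that $I$ is a $2^{-}$-var radical shuffle ideal, since for general $I$ the available inclusion goes the other way. So the caveat you isolate is not an artifact of your route: the paper's own argument needs the same hypothesis (equivalently, the reading of the lemma in which $I$ is the ideal of a path variety), and under that hypothesis your computation coincides in substance with the paper's. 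Your finite-$G$ shuffle-product argument, which removes the radicality assumption altogether by producing $w=\shuffle_{A\in G}z_A\in I^G$ not annihilated by $\sigma(X)$, is a genuine addition not contained in the paper.
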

\begin{proof}

Then $\mathcal{I}(V^G)$ is contained in $\mathcal{I}(V)$, but also invariant, and since $\mathcal{I}(V)^G$ is the largest invariant subset of $\mathcal{I}(V)$, we have $\mathcal{I}(V^G)\subseteq\mathcal{I}(V)^G$. Let $x\in\mathcal{I}(V)^G$, then $A^\top x\in \mathcal{I}(V)$ for all $A\in G$, and thus $0=\langle \sigma(X),A^\top x\rangle=\langle\sigma(AX),x\rangle$ for all $A\in G$ and $X\in V$, meaning $x\in\mathcal{I}(V^G)$. Thus $\mathcal{I}(V^G)=\mathcal{I}(V)^G$.

Likewise, $\mathcal{V}(I^G)$ contains $\mathcal{V}(I)$, and is invariant, and since $\mathcal{V}(I)^G$ is the smallest invariant superset of $\mathcal{V}(I)$, we have $\mathcal{V}(I)^G\subseteq\mathcal{V}(I^G)$. Also, $\mathcal{I}(\mathcal{V}(I)^G)=\mathcal{I}(\mathcal{V}(I))^G\subseteq I^G$ and thus $\overline{\mathcal{V}(I)^G}\supseteq\mathcal{V}(I^G)$, but also $\overline{\mathcal{V}(I)^G}$ is the smallest path variety containing $\mathcal{V}(I)^G$, so $\overline{\mathcal{V}(I)^G}=\mathcal{V}(I^G)$.

$X\in \mathcal{V}(I_G)$
 if and only if $0=\langle\sigma(X),A^\top x\rangle=\langle\sigma(AX),x\rangle$ for all $x\in I$ and all $A\in G$,
 so if and only if $X\in\mathcal{V}(I)_G$.
\end{proof}

So if $\mathcal{V}(I)^G$ is a path variety, it is equal to $\mathcal{V}(I^G)$. However, in general it is not, not even for compact $G$.
Take for example the path variety of paths in $\mathbb{R}^2$ whose increment vector lies in some non-circular ellipse with length of semi-major axis $b$ and length of semi-minor axis $a$. Then the $O_2$ invariantization of this path variety is not a sub path variety of $\varltwo{\R^2}$, as it will be the set of paths whose increment vector has a length greater or equal $a$ and smaller or equal $b$. In this case, $\mathcal{V}(I^G)$ must necessarily be all paths in $\mathbb{R}^2$.

In the ongoing project \cite{diehllyonsnipreiss},
it is shown that if the path variety $V$ is just the set of paths being tree-like equivalent to a given path $X$,
and $G$ is a subgroup of $O_d$ acting pointwise on $\varltwo{\R^d}$,
then $V^G$, the $G$ orbit of $X$ up to tree-like equivalence,
is a path variety given by the values of the $G^\top$ invariants in $T^{\geq 1}(\mathbb{R}^d)$.

\begin{question}
 For a path variety $V$, can $V^G$ generally be described as a semialgebraic set?
\end{question}

For $G$ compact, let $\Pi_G^\top x:=\int_{G}A^\top xd\mu_G(A)$, where $\mu_G$ is the Haar measure on $G$ (this was used as a tool in \cite{diehllyonsnipreiss}).
\begin{theorem}
 For $G$ compact, we have $\mathcal{V}(I_G)=\mathcal{V}(\Pi_G^\top I)$. In particular, if $J$ is $G$ invariant, we have $\mathcal{V}(J)=\mathcal{V}(\Pi_G J)$.
\end{theorem}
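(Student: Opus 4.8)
The plan is to translate both sides into a statement about the continuous functions $\psi_x\colon G\to\R$, $\psi_x(A):=\langle\sigma(AX),x\rangle=\langle\sigma(X),A^\top x\rangle$, and then exploit that the average of a nonnegative continuous function against Haar measure vanishes only if the function itself does. First I would record the two descriptions of the varieties involved. By the Lemma above, $X\in\mathcal{V}(I_G)$ if and only if $\langle\sigma(AX),x\rangle=0$ for all $A\in G$ and all $x\in I$, i.e.\ iff $\psi_x\equiv 0$ on $G$ for every $x\in I$. On the other side, since $\langle\sigma(X),\cdot\rangle$ is linear it commutes with the integral defining $\Pi_G^\top$, so
\begin{equation*}
 \langle\sigma(X),\Pi_G^\top x\rangle=\int_G\langle\sigma(X),A^\top x\rangle\,\mathrm{d}\mu_G(A)=\int_G\psi_x(A)\,\mathrm{d}\mu_G(A),
\end{equation*}
and thus $X\in\mathcal{V}(\Pi_G^\top I)$ iff $\int_G\psi_x\,\mathrm{d}\mu_G=0$ for every $x\in I$. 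In this form the inclusion $\mathcal{V}(I_G)\subseteq\mathcal{V}(\Pi_G^\top I)$ is immediate: if $\psi_x\equiv 0$ then its integral is $0$.

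For the reverse inclusion I would fix $X\in\mathcal{V}(\Pi_G^\top I)$ and $x\in I$. Since $I$ is a shuffle ideal, $x\shuffle x\in I$, and Ree's shuffle identity gives $\psi_{x\shuffle x}(A)=\langle\sigma(AX),x\shuffle x\rangle=\langle\sigma(AX),x\rangle^2=\psi_x(A)^2$, whence
\begin{equation*}
 \int_G\psi_x(A)^2\,\mathrm{d}\mu_G(A)=\int_G\psi_{x\shuffle x}(A)\,\mathrm{d}\mu_G(A)=0.
\end{equation*}
The integrand $A\mapsto\psi_x(A)^2$ is nonnegative and continuous (the $G$-action on $T(\R^d)$ is continuous and $\langle\sigma(X),\cdot\rangle$ is linear), and Haar measure on a compact group gives positive mass to every nonempty open set; a value $\psi_x(A_0)^2>0$ would therefore be bounded below on an open set of positive measure, contradicting the vanishing of the integral. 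Hence $\psi_x\equiv 0$ on $G$, and since $x\in I$ was arbitrary, $X\in\mathcal{V}(I_G)$. This proves $\mathcal{V}(I_G)=\mathcal{V}(\Pi_G^\top I)$.

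For the final assertion I would apply this with $I=J$. If $J$ is $G$-invariant then $A^\top J=J$, hence also $(A^\top)^{-1}J=(A^{-1})^\top J=J$ for every $A\in G$; therefore $x\in J_G$ iff $A^\top x\in J$ for some $A\in G$ iff $x\in J$, i.e.\ $J_G=J$. The first part then gives $\mathcal{V}(J)=\mathcal{V}(J_G)=\mathcal{V}(\Pi_G^\top J)$, which is the intended meaning of $\mathcal{V}(\Pi_G J)$ (for the orthogonal actions of interest $A^\top=A^{-1}$, and inversion invariance of Haar measure identifies $\Pi_G^\top$ with the averaging operator $\Pi_G$).

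The step I expect to be the main obstacle is the reverse inclusion $\mathcal{V}(\Pi_G^\top I)\subseteq\mathcal{V}(I_G)$: this is exactly where compactness of $G$ enters, through the existence of a normalized Haar measure, through its full support, and through the shuffle-square trick $x\mapsto x\shuffle x$ that converts a vanishing average into pointwise vanishing. Without the positivity supplied by $x\shuffle x$, or for a group whose averaging sees less of the orbit, the integral condition would be strictly weaker than membership in $\mathcal{V}(I_G)$ --- the same phenomenon that makes $\mathcal{V}(I)^G$ in general strictly smaller than $\mathcal{V}(I^G)$ in the non-averaged picture.
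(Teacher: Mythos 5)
Your proposal is correct and follows essentially the same route as the paper's proof: both rely on the shuffle-square trick $x\mapsto x\shuffle x$, the identity $\langle\sigma(X),\Pi_G^\top(x\shuffle x)\rangle=\int_G\langle\sigma(AX),x\rangle^2\,\mathrm{d}\mu_G(A)$, and the observation that this vanishes iff $\langle\sigma(AX),x\rangle$ vanishes for all $A\in G$. You make explicit the continuity and full-support-of-Haar-measure argument that the paper leaves implicit, and you spell out the ``in particular'' step via $J_G=J$, which the paper does not.
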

\begin{proof}
If $x\in I$, then so is $x\shuffle x$.
 \begin{equation*}
  \langle\sigma(X),\Pi_G^\top (x\shuffle x)\rangle=\int_G\langle\sigma(AX),x\rangle^2 d\mu_G(A)
 \end{equation*}
 is zero if and only if $\langle\sigma(AX),x\rangle=0$ for all $A\in G$.
 So $\mathcal{V}(I)_G=\mathcal{V}(\Pi_G^\top I)$.
\end{proof}
This means that for $G$ compact, all $G$-invariant varieties can be described by a subset of the $G$-invariant tensors vanishing.

This is not true for $G$ noncompact.
For example, $\overline{\{\mathbf{0}\}}$ is $G$-invariant for any $G$,
but not described by $G$-invariant tensors vanishing for any noncompact matrix group $G$ acting pointwise on pathspace.
So the varieties described by $G$-invariant tensors vanishing forms an interesting subclass of the $G$-invariant varieties.

\subsection{Complexification and Projectification}

For complexification, we can just look at paths in $\mathbb{C}^d$ and the complex signature which is given by all complex iterated integrals of the $d$ complex components of the path. However, the complex signature will fail to characterize complex paths up to tree-like equivalence. Indeed, already on level 2, we only get $d(d-1)/2$ independent complex signature components being equivalent to $d(d-1)$ independent real valued components, in contrast to the $d(2d-1)$ independent components of the level 2 real valued signature for paths in $\mathbb{R}^{2d}$.
This issue has already been discussed in \cite{BoedihardjoGeng19}.

A single complex signature will describe a set of paths which forms a path variety in $\mathbb{R}^{2d}$ which contains an infinite set of reduced paths. However, this does not have to concern us as we mostly look at complexification for the nicer algebraic structure.

A complex valued Chen-Chow theorem follows immediately from the real valued Chen-Chow theorem \cite{FrizVictoir10}[Theorem 7.28], and so we get Hilbert's Nullstellensatz for finitely generated shuffle ideals.

\begin{question}
 Is there a variant of Hilbert's Nullstellensatz for finitely generated halfshuffle ideals over $\varltwo{\C^d}$ or bounded variation paths through $\C^d$?
\end{question}

Projective varieties are all $\mathcal{V}(I)$ for $I$ a homogeneous ideal, i.e. an ideal generated by homogeneous tensors. Projective path space is the space of all
'lines' $\{Z\treelike rX|r\in\R\}$.

Another possibility is to look at log-projective varieties.
That is varieties $\mathcal{V}(I)$ for $I$ an ideal generated by tensors which are homogeneous shuffle polynomials in the coordinates of the first kind. These are exactly the varieties such that for any path $X$ in them, all $\check{X}^{\sqcup r}$ for $r\in\mathbb{R}$ admissable are contained.
Log-projective path space is given by all log-projective 'lines' $\{Z\treelike X^{\sqcup r}|r\in\R\text{ admissible}\}$.

Finally, it seems fitting to also understand the family of halfshuffle varieties as a form of projective varieties.
The halfshuffle projective path space is then given by the set of all halfshuffle varieties which do not contain a strict halfshuffle subvariety. We conjecture that all these minimal halfshuffle varieties are of the form $\pathsin{M}$ and one dimensional.
\subsection{Semialgebraic sets}
\label{sec:semialg}
The definition of semialgebraic sets of paths is a bit tricky because in order to include complements of path varieties, we need to allow for countably infinite $\vee$ compositions of inequalities, and in order to include varieties, we need to allow for countably infinite $\wedge$ compositions of equalities, but then again we don't want to allow arbitrary countably infinite $\vee$/$\wedge$ compositions of equalities and inequalities.

One possibility (\textbf{Definition 1}) is to allow countably infinite $\vee$/$\wedge$ compositions of equalities and inequalities such that for any $n$, only finitely many of the (in)equalities can be expressed in terms of polynomials over the level $n$ truncation of the log signature.

Another possibility is an axiomatic approach.

\begin{definition}\label{def:semialg_conc}
The family of semialgebraic path sets is the smallest family of subsets of $\varltwo{\R^d}$ that contains path varieties and is stable under finite union, complement and concatenation (the family of varieties is not stable under complement and concatenation).
\end{definition}

\begin{definition}\label{def:semialg_proj}
The familiy of semialgebraic path sets is the smallest family of subsets of $\bigsqcup_{d=2}^\infty\varltwo{\R^d}$ that contains path varieties and is stable under finite union,  complement and linear projection.
\end{definition}

The semialgebraic path sets from Definition \ref{def:semialg_proj} contain those from Definition \ref{def:semialg_conc}, which is a consequence of Theorem \ref{thm:conc_varieties}. Indeed, for path varieties $V_1,V_2$, the paths $(X,Y)$ such that $X\sqcup Y\in V_2$ and $X\in \overleftarrow{V_1}$ form a path variety, so the projection of that path variety on the $Y$ part yields $V_1\sqcup V_2$. How the definitions relate to each other beyond that remains to be seen.

However, whenever we have a notion stable under concatenation, we can describe piecewise linear paths and axis parallel paths up to a certain number of steps as a semialgebraic set. If our notion of semialgebraic path sets is stable under linear projection and $\mathrm{GL}_d$ invariantization, we can describe polynomial paths up to a certain order as a semialgebraic set. Indeed, let $X_1,\dots,X_d$ be $d$ copies of the moment curve in $\mathbb{R}^n$, then taking the $\mathrm{GL}_{nd}$ orbit of $(X_1,\dots,X_d)$ and projecting onto the first $d$ coordinates will yield polynomial paths in $\mathbb{R}^d$ up to order $n$. It is an open problem whether all subpaths of algebraic curves up to a certain order can be described as a semialgebraic set.

In any case, the study of semialgebraic path sets should lead to meaningful connections with the work on "Varieties of signature tensors" \cite{AFS18}, and it's follow-up papers.

\subsection{Piecewise linear and polynomial complexity}

Given a path variety $V$, let the piecewise linear complexity be the smallest $n\in\mathbb{N}$ such that there are infinitely many non-tree like equivalent piecewise linear paths with $n$ pieces in $V$.

Similarly, let the polynomial complexity be the smallest $n\in\mathbb{N}$ such that there are infinitely many non-tree like equivalent polynomial paths of degree $n$ in $V$.

Let $f_n:\R^{d\times n}\to \varltwo{\R^d}$ be the map from $n$ vectors to the piecewise linear path consisting of these $n$ pieces.
Then $f_n^{-1}(V)$ is a point variety for all path varieties $V$, and if we only consider the topology of closed sets of this type on $\R^{d\times n}$,
then the piecewise linear complexity is the smallest $n\in\mathbb{N}$ such that $\dim f_n^{-1}(V)>0$ (this is then the dimension up to tree-like equivalence once again).

Both piecewise linear and polynomial complexity are not invariant under morphisms, however we can look at the minimal/maximal complexity under morphisms as an invariant.

\begin{figure}
 \includegraphics[width=0.4\textwidth]{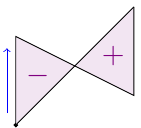}
 \caption{An example of a piecewise linear path with the least number of segments (four) such that the first two levels of the signature vanish.}\label{fig:zeroarea}
\end{figure}

As an example of particular importance, the piewewise linear complexity of $\mathcal{L}_2$,
the variety of paths with signature vanishing up to level $2$, is four, see Figure \ref{fig:zeroarea}.
Similar questions have been considered e.g.\ in \cite{GK14}, \cite[Section~1.6]{bib:Rei2018}, \cite[Section~2]{lyonsxu17} and \cite[Section~5.3]{AFS18}.

\bibliographystyle{alpha}
\bibliography{bib}

\def\cprime{$'$}
\begin{thebibliography}{BGLY16}

\bibitem[Abe80]{abe80}
Eiichi Abe.
\newblock {\em {Hopf algebras}}.
\newblock Cambridge University Press, Cambridge, {English} language edition,
  1980.

\bibitem[AFS19]{AFS18}
Carlos Am{\'e}ndola, Peter Friz, and Bernd Sturmfels.
\newblock {Varieties of Signature Tensors}.
\newblock {\em Forum of Mathematics, Sigma}, 7:e10, 2019.
\newblock
  \href{https://doi.org/10.1017/fms.2019.3}{\nolinkurl{doi:10.1017/fms.2019.3}}.

\bibitem[ALM23]{amendolaleemeroni23}
Carlos Améndola, Darrick Lee, and Chiara Meroni.
\newblock Convex hulls of curves: Volumes and signatures.
\newblock {\em arXiv.org e-print archive}, January 2023.
\newblock \href{https://arxiv.org/abs/2301.09405}{\tt arXiv:2301.09405
  [math.MG]}.

\bibitem[BCFP19]{brunedchevyrevfrizpreiss19}
Y.\ Bruned, I.\ Chevyrev, P.~K.\ Friz, and R.\ Prei{\ss}.
\newblock {A rough path perspective on renormalisation}.
\newblock {\em Journal of Functional Analysis}, 277(11):108283, December 2019.
\newblock
  \href{https://doi.org/10.1016/j.jfa.2019.108283}{\nolinkurl{doi:10.1016/j.jfa.2019.108283}}.

\bibitem[BCR98]{bochnakcosteroy}
Jacek Bochnak, Michel Coste, and Marie-Françoise Roy.
\newblock {\em {Real Algebraic Geometry}}.
\newblock {Ergebnisse der Mathematik und ihrer Grenzgebiete}. Springer Berlin,
  Heidelberg, 1998.
\newblock
  \href{https://doi.org/10.1007/978-3-662-03718-8}{\nolinkurl{doi:10.1007/978-3-662-03718-8}}.

\bibitem[BG19]{BoedihardjoGeng19}
Horatio Boedihardjo and Xi~Geng.
\newblock A non-vanishing property for the signature of a path.
\newblock {\em Comptes Rendus Mathematique}, 357(2):120--129, 2019.

\bibitem[BGLY16]{BGLY16}
Horatio Boedihardjo, Xi~Geng, Terry Lyons, and Danyu Yang.
\newblock {The signature of a rough path: Uniqueness}.
\newblock {\em Advances in Mathematics}, 293:720--737, April 2016.
\newblock
  \href{https://doi.org/10.1016/j.aim.2016.02.011}{\nolinkurl{doi:10.1016/j.aim.2016.02.011}}.

\bibitem[CGM20]{CGM20}
Laura Colmenarejo, Francesco Galuppi, and Mateusz Micha{\l}ek.
\newblock {Toric geometry of path signature varieties}.
\newblock {\em Advances in Applied Mathematics}, 121:102102, August 2020.
\newblock
  \href{https://doi.org/10.1016/j.aam.2020.102102}{\nolinkurl{doi:10.1016/j.aam.2020.102102}}.

\bibitem[Che54]{bib:Che1954}
Kuo-Tsai Chen.
\newblock {Iterated Integrals and Exponential Homomorphisms}.
\newblock {\em Proceedings of the London Mathematical Society},
  s3-4(1):502--512, 1954.
\newblock
  \href{https://doi.org/10.1112/plms/s3-4.1.502}{\nolinkurl{doi:10.1112/plms/s3-4.1.502}}.

\bibitem[Che57]{C57}
Kuo-Tsai Chen.
\newblock {Integration of paths, geometric invariants and a generalized
  {B}aker-{H}ausdorff formula}.
\newblock {\em Annals of Mathematics Second Series}, 65(1):163--178, January
  1957.
\newblock
  \href{https://doi.org/10.2307/1969671}{\nolinkurl{doi:10.2307/1969671}}.

\bibitem[Che58]{chen1958}
Kuo-Tsai Chen.
\newblock {Integration of Paths -- A Faithful Representation of Paths by
  Noncommutative Formal Power Series}.
\newblock {\em Transactions of the American Mathematical Society},
  89(2):395--407, 1958.
\newblock
  \href{https://doi.org/10.2307/1993193}{\nolinkurl{doi:10.2307/1993193}}.

\bibitem[CP20]{colmenarejopreiss20}
Laura Colmenarejo and Rosa Prei{\ss}.
\newblock {Signatures of paths transformed by polynomial maps}.
\newblock {\em Beitr{\"a}ge zur Algebra und Geometrie/Contributions to Algebra
  and Geometry}, 61(4):695--717, December 2020.
\newblock
  \href{https://doi.org/10.1007/s13366-020-00493-9}{\nolinkurl{doi:10.1007/s13366-020-00493-9}}.

\bibitem[Dap]{Dap}
Dap.
\newblock {Zariski closure of product is product of Zariski closures}.
\newblock Mathematics Stack Exchange.
\newblock URL:https://math.stackexchange.com/q/3121678 (version: 2019-02-21).

\bibitem[DLNP]{diehllyonsnipreiss}
Joscha Diehl, Terry Lyons, Hao Ni, and Rosa Preiß.
\newblock Signature invariants characterize orbits of paths under compact
  matrix group action.
\newblock Work in progress.

\bibitem[DLPR21]{DLPR20}
Joscha Diehl, Terry Lyons, Rosa Prei{\ss}, and Jeremy Reizenstein.
\newblock {Areas of areas generate the shuffle algebra}, July 2021.
\newblock Version 2, \href{https://arxiv.org/abs/2002.02338v2}{{\tt
  arXiv:2002.02338v2 [math.RA]}}.

\bibitem[DPR]{diehlreizensteinpreiss}
Joscha Diehl, Rosa Preiß, and Jeremy Reizenstein.
\newblock Conjugation, loop and closure invariants of the iterated-integrals
  signature.
\newblock Work in progress.

\bibitem[DR19]{DR18}
Joscha Diehl and Jeremy Reizenstein.
\newblock {Invariants of Multidimensional Time Series Based on Their
  Iterated-Integral Signature}.
\newblock {\em Acta Applicandae Mathematicae}, 164(1):83--122, December 2019.
\newblock
  \href{https://doi.org/10.1007/s10440-018-00227-z}{\nolinkurl{doi:10.1007/s10440-018-00227-z}}.

\bibitem[EM53]{EM53}
Samual Eilenberg and Saunders {Mac Lane}.
\newblock {On the {Groups} {$H(\Pi,n)$}, {I}}.
\newblock {\em Annals of Mathematics Second Series}, 58(1):55--106, July 1953.
\newblock
  \href{https://doi.org/10.2307/1969820}{\nolinkurl{doi:10.2307/1969820}}.

\bibitem[FP13]{bib:FP13}
Lo{\"i}c Foissy and Frédéric Patras.
\newblock {Natural endomorphisms of shuffle algebras}.
\newblock {\em International Journal of Algebra and Computation},
  23(4):989--1009, 2013.
\newblock
  \href{https://doi.org/10.1142/S0218196713400183}{\nolinkurl{doi:10.1142/S0218196713400183}}.

\bibitem[FP20]{foissypatras2020}
Lo{\"i}c Foissy and Frédéric Patras.
\newblock {Lie Theory for Quasi-Shuffle Bialgebras}.
\newblock In Jos{\'e}~Ignacio {Burgos Gil}, Kurusch Ebrahimi-Fard, and Herbert
  Gangl, editors, {\em {Periods in Quantum Field Theory and Arithmetic.
  ICMAT-MZV 2014}}, volume 314 of {\em {Springer Proceedings in Mathematics \&
  Statistics}}, pages 483--540. Springer Nature Switzerland, 2020.
\newblock
  \href{https://doi.org/10.1007/978-3-030-37031-2_19}{\nolinkurl{doi:10.1007/978-3-030-37031-2_19}}.

\bibitem[FV10]{FrizVictoir10}
Peter~K.\ Friz and Nicolas~B.\ Victoir.
\newblock {\em {Multidimensional Stochastic Processes as Rough paths: Theory
  and Applications}}, volume 120 of {\em {Cambridge Studies in Advanced
  Mathematics}}.
\newblock Cambridge University Press, Cambridge, 2010.
\newblock
  \href{https://doi.org/10.1017/CBO9780511845079}{\nolinkurl{doi:10.1017/CBO9780511845079}}.

\bibitem[Gal19]{G18}
Francesco Galuppi.
\newblock {The rough Veronese variety}.
\newblock {\em Linear Algebra and Its Applications}, 583:282--299, December
  2019.
\newblock
  \href{https://doi.org/10.1016/j.laa.2019.08.029}{\nolinkurl{doi:10.1016/j.laa.2019.08.029}}.

\bibitem[Gat22]{gathmann}
Andreas Gathmann.
\newblock {Algebraic Geometry}, 2021/22.
\newblock Lecture notes.
  \url{https://agag-gathmann.math.rptu.de/class/alggeom-2021/alggeom-2021.pdf}.

\bibitem[GK08]{GK08}
Eric Gehrig and Matthias Kawski.
\newblock {A hopf-algebraic formula for compositions of noncommuting flows}.
\newblock In {\em {2008 47th IEEE Conference on Decision and Control}}, pages
  1569--1574, 2008.
\newblock
  \href{https://doi.org/10.1109/CDC.2008.4738914}{\nolinkurl{doi:10.1109/CDC.2008.4738914}}.

\bibitem[GK14]{GK14}
Jean-Paul Gauthier and Matthias Kawski.
\newblock {Minimal Complexity Sinusoidal Controls for Path Planning}.
\newblock {\em {53rd IEEE Conference on Decision and Control}}, page
  3731–3736, December 2014.

\bibitem[HL08]{hamblylyons2008}
Ben Hambly and Terry Lyons.
\newblock {Some notes on trees and paths}, September 2008.
\newblock \href{https://arxiv.org/abs/0809.1365v1}{{\tt arXiv:0809.1365v1
  [math.CA]}}.

\bibitem[Lod95]{L95}
Jean-Louis Loday.
\newblock {CUP-Product for Leibniz Cohomology and Dual Leibniz Algebras}.
\newblock {\em Mathematica Scandinavica}, 77(2):189--196, December 1995.
\newblock
  \href{https://doi.org/10.7146/math.scand.a-12560}{\nolinkurl{doi:10.7146/math.scand.a-12560}}.

\bibitem[LX17]{lyonsxu17}
Terry~J.\ Lyons and Weijun Xu.
\newblock {Hyperbolic development and inversion of signature}.
\newblock {\em Journal of Functional Analysis}, 272:2933--2955, April 2017.

\bibitem[Lyo94]{lyons94}
Terry Lyons.
\newblock {Differential equations driven by rough signals (I): An extension of
  an inequality of L.\ C.\ Young}.
\newblock {\em Mathematical Research Letters}, 1(4):451--464, January 1994.
\newblock
  \href{https://dx.doi.org/10.4310/MRL.1994.v1.n4.a5}{\nolinkurl{doi:10.4310/MRL.1994.v1.n4.a5}}.

\bibitem[Lyo95]{lyons95}
Terry~J.\ Lyons.
\newblock {The interpretation and solution of ordinary differential equations
  driven by rough signals}.
\newblock In Michael~C.\ Cranston and Mark~A.\ Pinsky, editors, {\em
  {Stochastic Analysis}}, volume~57 of {\em {Proceedings of Symposia in Pure
  Mathematics}}, pages 115--128, 1995.
\newblock
  \href{https://dx.doi.org/10.1090/pspum/057/1335466}{\nolinkurl{doi:10.1090/pspum/057/1335466}}.

\bibitem[Lyo98]{Lyons98}
Terry~J.\ Lyons.
\newblock {Differential equations driven by rough signals}.
\newblock {\em Revista Matemática Iberoamericana}, 14(2):215--310, 1998.
\newblock \url{https://eudml.org/doc/39555}.

\bibitem[Man06]{manchon06}
Domenique Manchon.
\newblock {Hopf algebras, from basics to applications to renormalization}.
\newblock arXiv.org e-Print archive, May 2006.
\newblock arXiv:math/0408405v2.

\bibitem[NB11]{NariciBeckenstein11}
Lawrence Narici and Edward Beckenstein.
\newblock {\em {Topological Vector Spaces}}.
\newblock {Pure and Applied Mathematics}. {Chapman \& Hall/CRC}, second
  edition, 2011.

\bibitem[Net]{netzer}
Tim Netzer.
\newblock Real algebra and geometry.
\newblock Lecture Notes.
  \url{https://www.uibk.ac.at/mathematik/algebra/media/teaching/ragen.pdf}.

\bibitem[Nic78]{Nichols78}
Warren~D.\ Nichols.
\newblock {Quotients of Hopf algebras}.
\newblock {\em {Communications in Algebra}}, 6(17):1789--1800, 1978.
\newblock
  \href{https://doi.org/10.1080/00927877808822321}{\nolinkurl{doi:10.1080/00927877808822321}}.

\bibitem[Obe]{Oberwolfach23}
{Mini-Workshop: Combinatorial and Algebraic Structures in Rough Analysis and
  Related Fields}.

\bibitem[Pre16]{preiss16}
Rosa Prei{\ss}.
\newblock {From Hopf algebras to rough paths and regularity structures}.
\newblock Master's thesis, Technische Universit{\"a}t Berlin, July 2016.
\newblock \url{http://page.math.tu-berlin.de/~preiss/files/masters.pdf}.

\bibitem[Ree58]{Ree58}
Rimhak Ree.
\newblock {Lie Elements and an Algebra Associated With Shuffles}.
\newblock {\em Annals of Mathematics Second Series}, 68(2):210--220, September
  1958.
\newblock
  \href{https://doi.org/10.2307/1970243}{\nolinkurl{doi:10.2307/1970243}}.

\bibitem[Rei19]{bib:Rei2018}
Jeremy~Francis Reizenstein.
\newblock {\em {Iterated-integral signatures in machine learning}}.
\newblock PhD thesis, University of Warwick, April 2019.
\newblock \url{http://webcat.warwick.ac.uk/record=b3431133~S15}.

\bibitem[Reu93]{reutenauer93}
Christophe Reutenauer.
\newblock {\em {Free {L}ie algebras}}, volume~7 of {\em {London Mathematical
  Society Monographs, New Series}}.
\newblock The Clarendon Press, Oxford University Press, New York, 1993.

\bibitem[Rud91]{rudin91}
Walter Rudin.
\newblock {\em {Functional Analysis}}.
\newblock International series in pure and applied mathematics. McGraw-Hill,
  1991.

\bibitem[Sch59]{S58}
Marcel~P.\ Sch{\"u}tzenberger.
\newblock {Sur une propri{\'e}t{\'e} combinatoire des alg{\`e}bres de Lie
  libres pouvant {\^e}tre utilis{\'e}e dans un probl{\`e}me de
  math{\'e}matiques appliqu{\'e}es}.
\newblock S{\'e}minaire Dubreil. Alg{\`e}bres et th{\'e}orie des nombres,
  Volume 12 (1958-1959) no. 1, Talk no. 1, pages 1-23, 1958--1959.
\newblock \url{http://www.numdam.org/item/SD_1958-1959__12_1_A1_0/}.

\bibitem[Swe69]{sweedler69}
Moss~E Sweedler.
\newblock {\em {Hopf Algebras}}.
\newblock {Mathematical Lecture Note Series}. WA Benjamin Inc New York, 1969.

\bibitem[You36]{young36}
L.~C.\ Young.
\newblock {An inequality of the H{\"o}lder type, connected with Stieltjes
  integration}.
\newblock {\em Acta Mathematica}, 67(1):251--282, 1936.
\newblock
  \href{https://dx.doi.org/10.1007/BF02401743}{\nolinkurl{doi:10.1007/BF02401743}}.

\end{thebibliography}

 \end{document}